\documentclass[12pt,a4paper]{article}
\usepackage{amsmath}
\usepackage{amssymb}
\usepackage{amsthm}
\usepackage{amsfonts}
\usepackage{latexsym}

\theoremstyle{plain}
\newtheorem{thm}{Theorem}[section]
\newtheorem{cor}{Corollary}[section]
\newtheorem{lem}{Lemma}[section]
\newtheorem{prop}{Proposition}[section]
\theoremstyle{definition}
\newtheorem{defn}{Definition}[section]
\newtheorem{notn}{Notation}[section]

\theoremstyle{remark}

\newtheorem{rem}{Remark}[section]

\hyphenation{re-la-ti-ve-ly ma-ni-fold ma-ni-folds re-pre-sen-ta-tion ope-ra-tor
sa-ti-sfy-ing re-pre-sen-ta-tions mul-ti-pli-ci-ties va-lu-ed
com-pa-ti-ble po-la-ri-za-tion par-ti-cu-lar sti-mu-la-ting tri-vial
dif-fe-ren-tial va-ni-shing me-ta-li-near na-tu-ral-ly
e-qui-va-len-tly ge-ne-ra-li-ty na-tu-ral fa-mi-ly geo-me-tric
uni-ta-ri-ly e-qui-va-rian-tly li-nea-ri-za-tion dia-go-nal geo-me-try
re-gu-la-ri-ty rel-la-ti-ve-ly e-qui-va-ri-ant o-pe-ra-tors geo-me-tri-cal-ly boun-da-ry}
\title{Local trace formulae for commuting Hamiltonians
 in Toeplitz quantization}
\author{Roberto Paoletti\footnote{\noindent{\bf Address:}
Dipartimento di Matematica e Applicazioni, Universit\`a degli Studi
di Milano Bicocca, Via R. Cozzi 53, 20125 Milano,
Italy; {\bf e-mail}: roberto.paoletti@unimib.it }}
\date{}

\begin{document}

\maketitle
\begin{abstract}
Let $(M,J,\omega)$ be a quantizable compact K\"{a}hler manifold,
with quantizing Hermitian line bundle $(A,h)$, and associated Hardy space
$H(X)$, where $X$ is the unit circle bundle. Given a collection of
$r$ Poisson commuting quantizable Hamiltonian functions $f_j$ on $M$,
there is an induced Abelian unitary action on $H(X)$, generated
by certain Toeplitz operators naturally induced by the $f_j$'s.
As a multi-dimensional analogue of the usual Weyl law and trace formula,
we consider the problem of describing the asymptotic clustering of the 
joint eigenvalues of these Toeplitz operators along a given ray, 
and locally on $M$
the asymptotic concentration of the corresponding joint eigenfunctions.
This problem naturally leads to a \lq directional local trace formula\rq,
involving scaling asymptotics in the neighborhood of certain special loci
in $M$. Under
natural transversality assumption, we obtain asymptotic expansions
related to the local geometry of the Hamiltonian action and flow.
\end{abstract}
 
\section{Introduction}

This paper is concerned with certain asymptotic expansions related 
to the singularities of a distributional trace, which is associated to the joint quantization
of a family of pairwise commuting Hamiltonians in Toeplitz quantization. 
The emphasis will be on the local manifestation of these expansions, where locality is referred to 
the phase space of the the classical system, and to its relation to the underlying symplectic geometry
and dynamics.
Before stating the relevant results, we need to describe at some length
the general picture in which we are working.

\subsection{Quantization and distributional traces}

\subsubsection{Berezin-Toeplitz quantization in the Hardy space scheme}
Let $(M,J,\omega)$ be a $d$-dimensional compact K\"{a}hler manifold,
endowed with the symplectic volume form $\mathrm{d}V_M=:\omega^{\wedge d}/d!$

The symplectic manifold $(M,2\omega)$ may be viewed as a model for a
classical phase space. To any $f\in \mathcal{C}^\infty(M)$, the symplectic
structure $2\,\omega$ associates a
Hamiltonian vector field $\upsilon_f\in \mathfrak{X}(M)$, and the latter generates a Hamiltonian
flow $\phi^M_s:M\rightarrow M$ ($s\in \mathbb{R}$). 

\begin{defn}
 \label{defn:compatible Hamiltonian}
 The Hamiltonian $f\in \mathcal{C}^\infty(M)$ is \textit{compatible}
(with the K\"{a}hler structure $(\omega,J)$ of $M$)
if $\phi^M_\tau$ is holomorphic for every $\tau\in \mathbb{R}$.
\end{defn}

Albeit very special, compatible Hamiltonians are a very important and 
natural object of study; for instance, they are closely
related to holomorphic Lie group actions
on complex projective manifolds. \footnote{Compatible Hamiltonians are also called 
\textit{quantizable} in the literature \cite{cahen-gutt-rawnsley-I}.}

The following definition is standard in geometric quantization:

\begin{defn}
 \label{defn:quantizzabile}
The K\"{a}hler manifold $(M,J,\omega)$ is \textit{quantizable} if 
there exists a positive Hermitian homolomorphic
line bundle $(\mathcal{A},h)$ on $M$,
such that the unique compatible covariant derivative $\nabla$ on $\mathcal{A}$ has curvature $\Theta=-2i\,\omega$.
\end{defn}

It is well-known that the spaces $H^0\left(M,\mathcal{A}^{\otimes k}\right)$
of global holomorphic sections of powers of $\mathcal{A}$,
endowed with their natural Hilbert space structures, play the role of
a \lq quantum counterpart\rq\, of $(M,2\omega)$
at Plack's constant $\hbar=1/k$, $k=1,2,\ldots$.
Hardy space formalism provides a convenient repackaging of this picture, 
in which sections can be viewed as functions, and all values of $\hbar$ can be treated collectively,
as we now recall.

Let $\mathcal{A}^\vee$ be the dual line bundle to $\mathcal{A}$, with the induced Hermitian metric,
and let $X\subseteq A^\vee$ be the unit circle bundle, with projection 
$\pi:X\rightarrow M$ and connection 1-form $\alpha\in \Omega^1(X)$. Then $(X,\alpha)$ is a contact manifold,
with volume form
$\mathrm{d}V_X=:(\alpha/2\pi)\wedge \pi^*(\mathrm{d}V_M)$. We shall denote by $\partial _\theta$ the 
generator of the structure $S^1$-action on $X$.

The tangent bundle of $X$ splits as an invariant direct sum
\begin{equation}
 \label{eqn:somma diretta ortogonale orizzvert}
 TX=\mathcal{V}\oplus \mathcal{H},
\end{equation}
where $\mathcal{V}=\ker (\mathrm{d}\pi)$, the vertical tangent bundle,
is the rank-1 sub-bundle generated by $\partial_\theta$, and
$\mathcal{H}=\ker(\alpha)$ is the horizontal tangent bundle.
The complex structure $J$ naturally lifts to
a complex structure $J_H$ on the horizontal tangent bundle of $X$, and $J_H$ is a 
CR structure on $X$.

For any real-valued $f\in \mathcal{C}^\infty(M)$, there is a natural lift of $\upsilon_f$ to a contact
vector field $\widetilde{\upsilon}_f\in \mathfrak{X}(X)$, given by
\begin{equation}
 \label{eqn:contact lift vector}
\widetilde{\upsilon}_f=:\upsilon_f^\sharp-f\,\partial_\theta,
\end{equation}
where $\upsilon_f^\sharp$ is the $\alpha$-horizontal lift of $\upsilon_f$
\footnote{
Modifying $f$ by an additive constant will leave $\upsilon_f$ unchanged,
but alter
$\widetilde{\upsilon}_f$.}.
Thus
$\widetilde{\upsilon}_f$ generates a contact flow $\phi^X_s:X\rightarrow X$ on $(X,\alpha)$.
This flow preserves the splitting (\ref{eqn:somma diretta ortogonale orizzvert}); in addition, it
preserves the CR structure $J_H$ precisely when $f$ is compatible. 

The following is standard terminology (see \cite{bdm_sj}, \cite{bdm-g}, \cite{zel_tian} and \cite{bsz}):

\begin{defn}
 \label{defn:spazio di Hardy}
 The \textit{Hardy space}
$H(X)\subseteq L^2(X)$ of $X$ consists of the boundary values of
$L^2$-summable holomorphic functions on the unit disc bundle of $A^\vee$. 
The \textit{Szeg\"{o} projector} of $X$ is the $L^2$-orthogonal projector 
$\Pi:L^2(X)\rightarrow H(X)$; its distributional kernel $\Pi\in \mathcal{D}'(X\times X)$
is called the \textit{Szeg\"{o} kernel} of $X$. 
\end{defn}

As is well-known, there is a natural unitary isomorphism
$$
H(X)\cong H(M,\mathcal{A})=:\bigoplus_{\ell\ge 0}H^0\left(M,\mathcal{A}^{\otimes \ell}\right),
$$
where $\bigoplus$ is the Hilbert space direct sum.
The subspace of $H(X)$ corresponding to $H^0\left(M,A^{\otimes \ell}\right)$ is precisely the
$\ell$-th equivariant piece $H(X)_\ell\subseteq H(X)$ for the structure $S^1$-action
\cite{zel_tian}, \cite{bsz}.

By pull-back, the flow $\phi^X_s$ yields
a 1-parameter family of unitary automorphisms $U(s)=U_f(s)=:\left(\phi^X_{-s}\right)^*:L^2(X)\rightarrow L^2(X)$
($s\in \mathbb{R}$).
Furthermore, $f$ is compatible if and only if $\phi^X_s$ preserves the CR structure of $X$,
and this is equivalent to $H(X)$ being $U(s)$-invariant for every $s$. Therefore, a compatible
$f$ induces 
a 1-parameter family of unitary automorphisms 
\begin{equation}
 \label{eqn:1parameter unitary}
 \mathfrak{U}(s)=\mathfrak{U}_f(s):H(X)\rightarrow H(X).
\end{equation}

The family $\mathfrak{U}(s)$ is a quantization of the Hamiltonian flow $\phi^M_s$;
the quantization of the classical Hamiltonian $f$ should be a self-adjoint operator acting on $H(X)$,
and in Berezin-Toeplitz quantization this is given by a Toeplitz operator. In the Hardy space
picture, following \cite{bdm-g}, these are defined as follows.

\begin{defn}
 \label{defn:operatori di toplitz}
A $k$-th order Toeplitz operator on $X$ is a composition 
$T=:\Pi\circ Q\circ \Pi$, where $Q$ is a $k$-th order
pseudo-differential operator; $T$ is viewed as a possibly unbounded 
linear operator on $H(X)$.
\end{defn}

By the theory of \cite{bdm-g}, Toeplitz operators have a well-defined principal symbol.
Let
$$
\Sigma=:\big\{(x,\,r\alpha_x)\,:\,x\in X,\,r>0\big\}\subseteq T^*X\setminus (0).
$$
be 
the closed symplectic cone sprayed by $\alpha$. 

\begin{defn}
 \label{defn:simbolo principale toeplitz}
If $T$ is a $k$-th order Toeplitz operator on $X$,
its principal symbol $\mathfrak{s} _T:\Sigma\rightarrow \mathbb{C}$
is the $k$-th order homogeneous function on $\Sigma$ given by
the restriction of the principal symbol of $Q$ ($\mathfrak{s} _T$ is
independent of the choice of $Q$ in the definition of $T$).
\end{defn}

For instance, given $f\in \mathcal{C}^\infty(M)$ real valued, let $M_f:L^2(X)\rightarrow L^2(X)$
be the self-adjoint operator given by multiplication by $f\circ \pi$. Then
$T_f=:\Pi\circ M_f\circ \Pi$ is an invariant zeroth order Toeplitz operator, viewed as a self-adjoint endomorphism of $H(X)$. 
Its principal symbol is $\mathfrak{s}_{T_f}(x,r\,\alpha_x)=:f\big(\pi(x)\big)$.
Composing $T_f$ with the \lq number operator\rq\, $D=-i\,\partial_\theta$
turns it into a first order 
operator $T'_{f}$, with principal symbol $\mathfrak{s} _{T'_f}(x,r\,\alpha_x)=r\,f\big(\pi(x)\big)$.

When $f$ is compatible, there is another first-order Toeplitz operator associated to it,
that captures more explicitly the associated dynamics. 
Namely, $\widetilde{\upsilon}_f$ is a skew-Hermitian operator on $L^2(X)$ and leaves $H(X)$
invariant; therefore,
the restriction 
\begin{equation}
 \label{eqn:restriction of vf}
 \mathfrak{T}_f=:\left. i\,\widetilde{\upsilon}_f\right|_{H(X)}:H(X)\rightarrow H(X)
\end{equation}
is a first-order (formally) self-adjoint Toeplitz
operator; its principal 
symbol is again $\mathfrak{s}_{\mathfrak{T}_f}(x,r\,\alpha_x)=r\,f\big(\pi(x)\big)$.
Then $\mathfrak{T}_f$ generates $\mathfrak{U}(\cdot)$, i.e. $\mathfrak{U}(s)=e^{is\,\mathfrak{T}_f}$.

\subsubsection{Distributional traces of Toeplitz operators}
\label{sct:distributional trace}

Geometric quantization aims to relate the asymptotic properties of a quantized system to the underlying
classical dynamics and geometry. These properties may be of either global or local nature on $(M,2\omega)$.
For instance, the spectral asymptotics of a Toeplitz operator yield information of a global nature, while the 
asymptotic concentration of its eigenfunctions is a local result. Local properties can be turned into global ones by 
integration.

In particular, suppose that $f>0$. Then $\mathfrak{T}_f$ has eigenvalues on $H(X)$
$$\lambda_1\le \lambda_2\le \ldots,$$ 
repeated according to multiplicity, with $\lambda_j\uparrow +\infty$;
there is a complete orthonormal system $(e_j)$ of $H(X)$ formed by corresponding eigenvectors.

The distributional kernels of $\mathfrak{T}_f$ and 
$\mathfrak{U}(s)$ may be represented in terms of these spectral data\footnote{We shall not distinguish notationally an operator
from its distributional kernel.}: 
\begin{equation}
 \label{eqn:spectral_repr}
 \mathfrak{T}_f(x,y)=\sum_j\lambda_j\,e_j(x)\cdot \overline{e_j(y)},\,\,\,\,\,\,\,\,\,
 \mathfrak{U}(s,x,y)=\sum_je^{is\lambda_j}\,e_j(x)\cdot \overline{e_j(y)},
\end{equation}
where $x,y\in X$ and $s\in \mathbb{R}$.

The distributional trace 
$$\mathrm{tr}(\mathfrak{U})=:\sum_j e^{i\lambda_j s}:\chi=\chi(s)\mapsto \sum_j\widehat{\chi}(-\lambda_j)$$ 
is then a well-defined distribution on the real
line, and its singularities are concentrated on the set of periods of the contact flow $\phi^X$
\cite{bdm-g}. The trace formula
in \textit{loc. cit.} describes the singularity at each period
(for Toeplitz operators related to general symplectic cones). 
In particular, by a Tauberian argument the estimate
of the \lq big\rq\, singularity at the origin yields a Weyl law for the counting function of the $\lambda_j$'s
\footnote{For pseudodifferential operators, corresponding results had appeared in \cite{hor1} and
\cite{dg}}. 

In the present Berezin-Toeplitz context,
local version of these results (i.e., local Weyl laws and local trace formulae)
where obtained in \cite{p_weyl}, \cite{p_ijgmmp}, \cite{pao_JMP}, \cite{pao_equiv_weyl}.

\subsubsection{Commuting Hamiltonians and Abelian contact actions}
\label{sctn:commuting_contact}
We aim to generalize these results to a collection
of Poisson commuting compatible 
Hamiltonians $f_1,\ldots,f_r\in \mathcal{C}^\infty(M)$, 
meaning that $\{f_k,f_l\}=0$ for $k,l=1,\ldots,r$, where $\{\,,\,\}$ is the usual Poisson Lie bracket
of $\mathcal{C}^\infty(M)$. Let us write $\upsilon_k$ for $\upsilon_{f_k}$, and similarly for
$\widetilde{\upsilon}_k$. Then $[\upsilon_k,\upsilon_l]=0$ on $M$.

In addition, under the previous hypothesis, for every $j,\,k=1,\ldots,r$, we have on $X$
$$
\left[\widetilde{\upsilon}_j,\widetilde{\upsilon}_k\right]=\left[\upsilon_j,\upsilon_k\right]^\sharp
-\{f_j,f_k\}\,\partial_\theta=0.
 $$
Therefore, we obtain commuting self-adjoint first order Toeplitz operators $\mathfrak{T}_k$, 
given by the restriction to $H(X)$ of
$i\,\widetilde{\upsilon}_k$, $k=1,\ldots,r$. 

For instance, suppose that an $\ell$-dimensional compact torus $\mathbf{T}$ acts on $M$ in a holomorphic and
Hamiltonian manner, and let $\Phi:M\rightarrow \mathrm{Lie}(\mathbf{T})^\vee$ be the moment map to the
Lie coalgebra of $\mathbf{T}$. If $\mathbf{v}_j\in \mathrm{Lie}(\mathbf{T})$, $j=1,\ldots,r$, then the functions
$f_j=:\langle\Phi,\mathbf{v}_j\rangle$ are compatible and Poisson commute. 

Let $\phi^M_{j,s}:M\rightarrow M$ and $\phi^X_{j,s}:X\rightarrow X$ be the Hamiltonian and contact flows
associated to each $f_j$ ($s\in \mathbb{R}$). Thus 
\begin{equation}
 \label{eqn:commutativity of flows}
\phi^M_{k,s}\circ \phi^M_{l,s'}=\phi^M_{l,s'}\circ \phi^M_{k,s}\,\,\,\,\,\mathrm{and}\,\,\,\,\,
\phi^X_{k,s}\circ \phi^X_{l,s'}=\phi^X_{l,s'}\circ \phi^X_{k,s},
\end{equation}
for all $k,l=1,\ldots,r$ and $s,\,s'\in \mathbb{R}$.

Let us define $\phi^M:\mathbb{R}^r\times M\rightarrow M$ by
$$
\phi^M(\mathbf{s},\cdot)=\phi^M_\mathbf{s}=:\phi^M_{1,s_1}\circ\cdots\phi^M_{r,s_r}:M\rightarrow M\,\,\,\,\,\,\,\,\,
(\mathbf{s}=(s_j)\in \mathbb{R}^r);
$$
in view of (\ref{eqn:commutativity of flows}), this is an holomorphic action. It is furthermore Hamiltonian,
with moment map
\begin{equation}
\label{eqn:moment_map}
 \Phi=(f_1,\ldots,f_r)^{\mathrm{t}}:M\rightarrow \left(\mathbb{R}^r\right)^\vee\cong \mathbb{R}^r,
\end{equation}
where the latter isomorphism is by means of the standard scalar product.

In the same manner, we obtain a contact action $\phi^X:\mathbb{R}^r\times X\rightarrow X$, given by
$$
\phi^X(\mathbf{s},\cdot)=\phi^X_\mathbf{s}=:\phi^X_{1,s_1}\circ\cdots\phi^X_{r,s_r}:X\rightarrow X\,\,\,\,\,\,\,\,\,
(\mathbf{s}=(s_j)\in \mathbb{R}^r),
$$
which lifts $\phi^M$ in a natural manner. Pulling-back, we have the unitary representations
of $\mathbb{R}$
$$
\mathfrak{U}_j(s)=:\left(\phi^X_{j,-s}\right)^*:H(X)\rightarrow H(X),
$$
which may be combined into a unitary representation
$\mathfrak{U}:\mathbb{R}^r\times H(X)\rightarrow H(X)$, given by
\begin{equation}
 \label{eqn:unitary_operator_r1}
 \mathfrak{U}(\mathbf{s})=\mathfrak{U}(\mathbf{s},\cdot)=:\mathfrak{U}_1(s_1)\circ
 \cdots\circ\mathfrak{U}_r(s_r)=\left(\phi^X_{-\mathbf{s}}\right)^*:H(X)\rightarrow H(X).\end{equation}

\subsubsection{The joint spectrum and the associated trace}
\label{sctn:joint spectrum and trace}
                                                                                                    
Each $\mathfrak{T}_k$ is $S^1$-invariant, and therefore preserves the finite-dimensional
$S^1$-equivariant pieces $H(X)_\ell$, $l=0,1,2,\ldots$; the spectrum
of $\mathfrak{T}_k$ is the union over $\ell\in \mathbb{N}$ 
of the finite spectra of its restrictions to the $H(X)_\ell$'s. 

Furthermore, there is a complete orthonormal system $(e_j)$ of $H(X)$ 
composed of joint eigenvectors of the
$\mathfrak{T}_k$'s. That is, for each $j=1,2,\ldots$ and $k=1,\ldots,r$ we have
$$
\mathfrak{T}_k(e_j)=\lambda_{kj}\,e_j,
$$
where $\Lambda_j=:(\lambda_{1j},\ldots,\lambda_{rj})^\mathrm{t}\in \mathbb{R}^r$ 
is a \textit{joint eigenvalue} of the $\mathfrak{T}_k$'s.

We see in particular that for every $j$ we have
\begin{eqnarray}
 \label{eqn:unitary_operator_r}
 \mathfrak{U}(\mathbf{s})(e_j)&=&\mathfrak{U}_1(s_1)\circ
 \cdots\circ\mathfrak{U}_r(s_r)(e_j)\nonumber\\
 &=&e^{i\,(\lambda_{1j}\,s_1+\cdots+\lambda_{rj}\,s_r)}\,e_j=
 e^{i\,\langle\Lambda_j,\mathbf{s}\rangle\,}\,e_j.
\end{eqnarray}

In general, a given joint eigenvalue $\beta\in \mathbb{R}^r$ of the commuting system 
$\mathfrak{T}=:(\mathfrak{T}_k)$ needn't have finite
multiplicity: it may happen that $\beta=\Lambda_j$ for infinitely many $j$'s. 
Nonetheless, as in the case $r=1$, infinite multiplicities do not occur if
$\mathbf{0}\not\in \Phi(M)$ because in this case $\Lambda_j\rightarrow \infty$ 
(Lemma \ref{lem:eigenvalues drift to infinity}).

If $\mathbf{0}\not\in \Phi(M)$, therefore, the $\Lambda_j$'s drift to infinity and (just to fix ideas) 
may be ordered lexicographically in a non-decreasing sequence
$\Lambda_1\le \Lambda_2\le\ldots$, where each joint eigenvalue appears repeated according to its multiplicity.
For each $\mathbf{s}=(s_j)\in \mathbb{R}^r$, we obtain a first order self-adjoint Toeplitz operator of the form 
$$
\langle\mathfrak{T},\mathbf{s}\rangle=:\sum_{k=1}^rs_k\,\mathfrak{T}_k,
$$
with eigenvalues $\langle\Lambda_j,\mathbf{s}\rangle=\sum_{k=1}^r\lambda_{kj}s_k$ relative to the eigenvectors
$e_j$. 
Clearly, $\langle\mathfrak{T},\mathbf{s}\rangle$ is the restriction to $H(X)$ of $i\,\widetilde{\upsilon}_{\Phi^\mathbf{s}}$,
where
\begin{equation}
 \label{eqn:contact vector field s}
\widetilde{\upsilon}_{\Phi^\mathbf{s}}=\upsilon^\sharp_{\Phi^\mathbf{s}}-\Phi^\mathbf{s}\,\partial_\theta,
\end{equation}
and $\Phi^\mathbf{s}=:\langle\Phi,\mathbf{s}\rangle=\sum_{k=1}^r s_k\,f_k$,
and its Schwartz kernel is
$$
\langle\mathfrak{T},\mathbf{s}\rangle(x,y)=\sum_{j=1}^{+\infty}\langle\Lambda_j,\mathbf{s}\rangle\,e_j(x)\cdot
\overline{e_j(y)} \,\,\,\,\,\,\,\,\,\,\,(x,y\in X,\,\,\,\,\mathbf{s}\in \mathbb{R}^r).
$$
Similarly, we see from (\ref{eqn:unitary_operator_r}) that
\begin{eqnarray}
 \label{eqn:unitary_operator_r_spectral}
 \mathfrak{U}(\mathbf{s},x,y)&=&\sum_{j=1}^{+\infty}e^{i\langle\Lambda_j,\mathbf{s}\rangle}\,
 e_j(x)\cdot \overline{e_j(y)}=e^{i\langle\mathfrak{T},\mathbf{s}\rangle}(x,y).
\noindent
\end{eqnarray}

Then the distributional trace 
\begin{equation}
 \label{eqn:distributional trace Abelian}
 \mathrm{tr}(\mathfrak{U})=:\sum_j e^{i\langle \Lambda_j,\cdot\rangle}
\end{equation}
is a well-defined temperate distribution on $\mathbb{R}^r$, whose singularities 
encapsulate asymptotic information on the distribution of the $\Lambda_j$'s.

As in the 1-dimensional case, 
the singular support of $\mathrm{tr}(\mathfrak{U})$ is contained in the set of periods of $\phi^X$,
\begin{equation}
 \label{eqn:periods on X}
 \mathrm{Per}(\phi^X)=:\left\{\mathbf{s}\in \mathbb{R}^r\,:\,\exists\,x\in X\,\,
\mathrm{such\,that}\,\,\phi^X_\mathbf{s}(x)=x\right\};
\end{equation}
however, unlike the case $r=1$, 
$\mathrm{Per}(\phi^X)$ needn't consist of isolated points for $r\ge 2$. 

So let us fix a period $\mathbf{s}_0\in \mathrm{Per}(\phi^X)$ and a covector 
$\beta\in \left(\mathbb{R}^r\right)^\vee$ of unit length. 
As a measure of the singularity of $\mathrm{tr}(\mathfrak{U})$ at $\mathbf{s}_0$ in the direction $\beta$, 
we can consider the asymptotics 
for $\lambda\rightarrow \infty$ of the Fourier transform
\begin{eqnarray}
 \label{eqn:fourier_transform_0}
\mathcal{F}\big(\chi_{\mathbf{s}_0}\cdot \mathrm{tr}(\mathfrak{U})\big)(\lambda\,\beta)=
\left\langle \mathrm{tr}(\mathfrak{U}), \chi_{\mathbf{s}_0}\,e^{-i\lambda\,\langle \beta,\cdot\rangle}\right\rangle,
\end{eqnarray}
where $\chi_{\mathbf{s}_0}$ is a bump function supported in a small neighborhood of
$\mathbf{s}_0$. We shall take $\chi_{\mathbf{s}_0}(\cdot)=:\chi(\cdot - \mathbf{s}_0)$, 
where $\chi\in \mathcal{C}^\infty _0\left(\mathbb{R}^r\right)$ 
is a bump function vanishing for $\|\mathbf{s}\|\ge \epsilon$. 
We then obtain for (\ref{eqn:fourier_transform_0}):
\begin{eqnarray}
 \label{eqn:fourier_transform}
\mathcal{F}\big(\chi_{\mathbf{s}_0}\cdot \mathrm{tr}(\mathfrak{U})\big)(\lambda\,\beta)&=&
\sum_j\left\langle e^{i\langle \Lambda_j,\cdot\rangle}, \chi_{\mathbf{s}_0}
\,e^{-i\lambda\,\langle \beta,\cdot\rangle}\right\rangle\\
&=&e^{-i\,\lambda \langle \beta,\mathbf{s}_0\rangle}\sum_j e^{i\,\langle\Lambda_j,\mathbf{s}_0\rangle}\,\widehat{\chi}(\lambda \,\beta-\Lambda_j).\nonumber
\end{eqnarray}
%

In particular, for $\mathbf{s}_0=\mathbf{0}$ (\ref{eqn:fourier_transform}) reduces to
\begin{eqnarray}
 \label{eqn:fourier_transform_ray}
\mathcal{F}\big(\chi\cdot \mathrm{tr}(\mathfrak{U})\big)(\lambda\,\beta)
&=&\sum_j \widehat{\chi}(\lambda \,\beta-\Lambda_j),
\end{eqnarray}
which, for $\lambda\rightarrow+\infty$, detects the rate at which the 
$\Lambda_j$'s asymptotically accumulate in the neighborhood of the ray
$\mathbb{R}_+\,\beta$.

On the other hand, (\ref{eqn:fourier_transform}) may be expressed as the genuine trace of the smoothing operator
\begin{equation}
 \label{eqn:smoothing_operator}
\mathcal{S}_\chi(\lambda\,\beta,\mathbf{s}_0)=:
\int_{\mathbb{R}^r}\chi_{\mathbf{s}_0}(\mathbf{s})\,e^{-i\lambda\cdot\langle \beta,\mathbf{s}\rangle}\,
\mathfrak{U}(\mathbf{s})\,
\mathrm{d}\mathbf{s}.
\end{equation}
In other words, if $\mathcal{S}_\chi(\lambda\,\beta,\mathbf{s}_0,\cdot,\cdot)\in \mathcal{C}^\infty(X\times X)$ denotes the 
Schwartz kernel of $\mathcal{S}_\chi(\lambda\,\beta,\mathbf{s}_0)$, then
\begin{equation}
 \label{eqn:kernel_smoothing}
 \mathcal{S}_\chi(\lambda\,\beta,\mathbf{s}_0,x,y)=\sum_j 
 e^{-i\,\langle \lambda \beta-\Lambda_j,\mathbf{s}_0\rangle}\,\widehat{\chi}(\lambda \,\beta-\Lambda_j)
 \,e_j(x)\cdot\overline{e_j(y)}.
\end{equation}
and
\begin{equation}
 \label{eqn:integral_kernel_S}
\mathcal{F}\big(\chi_{\mathbf{s}_0}\cdot \mathrm{tr}(\mathfrak{U})\big)(\lambda\,\beta)=
\int_X\,\mathcal{S}_\chi(\lambda\,\beta,\mathbf{s}_0,x,x)\,\mathrm{dV}_X(x).
\end{equation}
It is suggestive to view $ \mathcal{S}_\chi(\lambda\,\beta,\mathbf{0} )$ as a \lq smoothed spectral projector\rq,
corresponding to a cluster of joint eigenvalues traveling to infinity along the ray $\mathbb{R}_+\beta$.
 
Here we shall analyze the local asymptotics of $\mathcal{S}_\chi(\lambda\,\beta,\mathbf{s}_0,\cdot,\cdot)$.
Although our methods apply with minor changes to the general case, 
to simplify the exposition we shall restrict our treatment to the on-diagonal asymptotics
(which is the one relevant to trace applications). 
For instance, for $\mathbf{s}=\mathbf{0}$ we obtain
\begin{equation}
 \label{eqn:kernel_smoothing_0}
 \mathcal{S}_\chi(\lambda\,\beta,\mathbf{0},x,x)=\sum_j 
 \widehat{\chi}(\lambda \,\beta-\Lambda_j)
 \,\big|e_j(x)\big|^2,
\end{equation}
which detects the asymptotic distribution of the \lq probability amplitudes\rq \,of the eigenfunctions 
corresponding to joint eigenvalues asymptotically clustering along the axis $\mathbb{R}_+\,\beta$.

\subsubsection{Moment map directional transversality}
\label{sctn:moment map tansversality}

Before stating our results, we need to introduce some further pieces of notation
and definitions.

\begin{notn}
 \label{notn:R_n_t}
 We shall view 
$\mathbb{R}^r$ as an Abelian Lie group, with Lie algebra $T_\mathbf{0}\mathbb{R}^r\cong\mathbb{R}^r$ itself, 
and coalgebra $\left(\mathbb{R}^r\right)^\vee$, which we shall identify with $\mathbb{R}^r$ by
means of the standard scalar product. Since it will be convenient to distinguish the various
roles of $\mathbb{R}^r$ in our arguments, 
we shall write $\mathfrak{t}=:T_\mathbf{0}\mathbb{R}^r$, and
write the moment map (\ref{eqn:moment_map}) as $\Phi=(f_k):M\rightarrow \mathfrak{t}^\vee$. We shall generally
denote elements of $\mathbb{R}^r$, viewed as group elements, by $\mathbf{s}_0,\,\mathbf{s},\ldots$,  
elements of $\mathfrak{t}$, viewed as tangent vectors at the origin, by
$\xi,\,\eta,\ldots$, and the general element of $\mathfrak{t}^\vee$ as $\beta$.
\end{notn}

\begin{defn}
 \label{defn:induced_vector_fields}
Any $\xi\in \mathfrak{t}$ induces in a standard manner vector fields $$\xi_M\in \mathfrak{X}(M)\,\,
\mathrm{and}\,\,\xi_X\in \mathfrak{X}(X)$$ on $M$ and $X$, respectively. For any $m\in M$ and $x\in X$, we then
have evaluation maps $\mathrm{val}_m:\mathfrak{t}\rightarrow T_mM$ and 
$\mathrm{val}_x:\mathfrak{t}\rightarrow T_xX$, given by 
$$\mathrm{val}_m:\xi\mapsto \xi_M(m)\,\,\mathrm{and}\,\,\mathrm{val}_x:\xi\mapsto\xi_X(x),$$
respectively. 
\end{defn}

\begin{rem}
\label{rem:contact lift vector}
Let $(e_1,\ldots,e_r)$ be the canonical basis of $\mathfrak{t}=\mathbb{R}^r$.
In intrinsic notation, $\Phi=\sum_j f_j\,e_j^*$, where $(e_j^*)$ is the dual basis.
We have, in particular, $e_{jM}=\upsilon_j$, $\Phi^{e_j}=\langle\Phi,e_j\rangle=f_j$,
$e_{jX}=\widetilde{\upsilon}_j$.
More generally, for any $\xi\in \mathfrak{t}$, $\xi_M$ is the Hamiltonian vector field
associated to $\Phi^\xi=:\langle\Phi,\xi\rangle$, and $\xi_X$ its contact lift according to
(\ref{eqn:contact lift vector}):
$$
\xi_X=\xi_M^\sharp-\Phi^\xi\,\partial_\theta.
$$
\end{rem}

\begin{defn}
 \label{defn:inverse_image_ray}
Suppose $\beta\in \mathfrak{t}^\vee$, $\beta\neq 0$. We shall set
$M_\beta=:\Phi^{-1}\big(\mathbb{R}_+\,\beta\big)$ and $X_\beta=:\pi^{-1}(M_\beta)$.
\end{defn}

Our local analysis requires that 
$\Phi:M\rightarrow \left(\mathbb{R}^r\right)^\vee\cong \mathbb{R}^r$ be transverse to $\mathbb{R}_+\,\beta$.
Thus $M_\beta$ is an invariant compact submanifold of $M$
of codimension $r-1$.

\begin{rem}
 When $\phi^M$ descends to an action of the torus $\mathbf{T}^r=\mathbb{R}^r/\mathbb{Z}^r$, $M_\beta$ 
is also connected (\S 2.1 of \cite{pao_IJM}).
\end{rem}

\begin{rem}
 \label{rem:equivalent_transversality}
This transversality assumption is equivalent to the contact action 
$\phi^X:\mathbb{R}^r\times X\rightarrow X$ being locally
free on $X_\beta$.
In turn, this is also equivalent to
the following condition: for any $m\in M_\beta$, 
the restriction of $\mathrm{val}_m$
to $\ker\big(\Phi(m)\big)\subseteq \mathfrak{t}$ is injective 
(\S 2.2 of \cite{pao_IJM}; see \S \ref{sct:transversality}
below).
\end{rem}

\begin{defn}
\label{defn:matrice prodotto scalare indotto}
 Assume that $\Phi:M\rightarrow \mathfrak{t}^\vee$ is transverse to $\mathbb{R}_+\,\beta$, for some
$\beta\in \mathfrak{t}^\vee$ of unit norm. Then, in view of Remark \ref{rem:equivalent_transversality}, 
for any $m\in M_\beta$ the vector subspace $\ker\Phi(m)\subseteq \mathfrak{t}$ inherits
two Euclidean structures 
$$
\langle\cdot,\cdot\rangle _0, \langle\cdot,\cdot\rangle _1: \ker\Phi(m)\times \ker\Phi(m)\rightarrow \mathbb{R},
$$ 
where the former is the restriction of the Euclidean product of $\mathfrak{t}$, and the latter is the pull-back
of the Euclidean product on $T_mM$ under $\mathrm{val}_m$.  Let $\mathcal{K}=(v_l)$ be any \textit{orthonormal} basis of 
 $\ker\Phi(m)$ with respect to $\langle\cdot,\cdot\rangle _0$, and let $D(m)=D(m,\mathcal{K})$ be 
the representative matrix of 
$\langle\cdot,\cdot\rangle _1$ with respect to $\mathcal{K}$, i.e. 
$$
D(m)_{kl}=\langle v_k,v_l\rangle _1=g_m\big(v_{kM}(m),v_{lM}(m)\big),
$$
where $g$ is the Riemannian metric on $M$. 
Then $\det D(m)>0$ is independent of the choice of $\mathcal{K}$, and we can define a $\mathcal{C}^\infty$ function
$\mathcal{D}:M_\beta\rightarrow \mathbb{R}_+$ by setting
$$\mathcal{D}(m)=:\sqrt{\det D(m)}.$$
\end{defn}

\subsubsection{Periods and singularities}

Let us adopt the short-hand $m_\mathbf{s}=:\phi^M_{-\mathbf{s}}(m)$ and
$x_\mathbf{s}=:\phi^X_{-\mathbf{s}}(x)$ ($m\in M,\,x\in X,\,\mathbf{s}\in \mathbb{R}^r$).

\begin{defn}
 \label{defn:notation_fixed_periods}
For any $\mathbf{s}\in \mathbb{R}^r$, let us denote by 
$$M(\mathbf{s})=:\mathrm{Fix}\left(\phi^M_\mathbf{s}\right)=\{m\in M:m=m_\mathbf{s}\}$$
and
$$X(\mathbf{s})=:\mathrm{Fix}\left(\phi^X_\mathbf{s}\right)=\{x\in X:x=x_\mathbf{s}\}$$
the fixed loci of $\phi^M_\mathbf{s}:M\rightarrow M$ and $\phi^X_\mathbf{s}:X\rightarrow X$, respectively.
\end{defn}

\begin{rem}
In general $X(\mathbf{s})$ is the inverse image in $X$
of the union of 
some connected components of $M(\mathbf{s})$ (but perhaps not all of them).
\end{rem}

\begin{defn}
 \label{defn:periods}
 The period sets of $\phi^M$ and $\phi^X$ are, respectively,
 $$
 \mathrm{Per}\left(\phi^M\right)=:
 \left\{\mathbf{s}\in \mathbb{R}^r\,:\,M(\mathbf{s})\neq \emptyset\right\}
 $$
 and
 $$
 \mathrm{Per}\left(\phi^X\right)=:
 \left\{\mathbf{s}\in \mathbb{R}^r\,:\,X(\mathbf{s})\neq \emptyset\right\}.
 $$
If $\mathbf{s}\in \mathrm{Per}\left(\phi^X\right)$, we shall set 
$$
X_\beta(\mathbf{s})=:X_\beta\cap X(\mathbf{s}).
$$
\end{defn}

Clearly, $\mathrm{Per}\left(\phi^X\right)\subseteq \mathrm{Per}\left(\phi^M\right)$, and the 
inclusion is generally strict. We then have (see \S \ref{sctn:trace_wave_front} and 
\S \ref{sctn:functorial} below):

\begin{prop}
 \label{prop:wave_front}
 If $x\in X$, let us set $m_x=:\pi(x)$. 
Then the wave front set of $\mathrm{tr}(\mathfrak{U})\in \mathcal{D}'\left(\mathbb{R}^r\right)$ is
$$
\mathrm{WF}\big(\mathrm{tr}(\mathfrak{U})\big)=
\big\{\big(\mathbf{s},r\,\Phi(m_x)\big)\,:\,\mathbf{s}\in \mathrm{Per}(\phi^X),\,x\in X(\mathbf{s}),\,r>0\big\}.
$$
\end{prop}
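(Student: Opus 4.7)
The plan is to realize $\mathrm{tr}(\mathfrak{U})$ as the push-forward, under the projection $\pi_1:\mathbb{R}^r\times X\to\mathbb{R}^r$, of the pull-back of the Szeg\"o kernel along a map encoding the contact flow, and then to apply the functorial wave front calculus. Since $\mathfrak{U}(\mathbf{s})\circ\Pi=(\phi^X_{-\mathbf{s}})^*\circ\Pi$, the Schwartz kernel of $\mathfrak{U}(\mathbf{s})$ as an operator on $L^2(X)$ is $\mathfrak{U}(\mathbf{s},x,y)=\Pi\bigl(\phi^X_{-\mathbf{s}}(x),y\bigr)$, so
$$
\mathrm{tr}(\mathfrak{U})(\mathbf{s})
=\int_X\Pi\bigl(\phi^X_{-\mathbf{s}}(x),x\bigr)\,\mathrm{d}V_X(x)
=\pi_{1*}\bigl(F^*\Pi\cdot \pi_2^*\mathrm{d}V_X\bigr),
$$
where $F:\mathbb{R}^r\times X\to X\times X$, $F(\mathbf{s},x)=\bigl(\phi^X_{-\mathbf{s}}(x),x\bigr)$ and $\pi_2$ is the projection to $X$. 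This factorization reduces the problem to a pull-back and a proper push-forward of a distribution with a very explicit wave front.

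By the Boutet de Monvel--Sj\"ostrand parametrix, $\mathrm{WF}(\Pi)=\{(x,r\alpha_x,x,-r\alpha_x):x\in X,\ r>0\}$. For $F^*\Pi$ to be microlocally well defined I need $\mathrm{WF}(\Pi)$ to avoid the conormal $N^*F$; a direct calculation shows that the intersection sits only over points $x$ with $\Phi(m_x)=0$, and those contribute only the zero covector to the push-forward, so they do not affect $\mathrm{WF}(\mathrm{tr}(\mathfrak{U}))$. A nonzero covector in $F^*\mathrm{WF}(\Pi)$ over $(\mathbf{s},x)$ therefore requires $\phi^X_{-\mathbf{s}}(x)=x$, i.e.\ $x\in X(\mathbf{s})$. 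Computing $dF^T(r\alpha_x,-r\alpha_x)$ with the splitting $T_{(\mathbf{s},x)}(\mathbb{R}^r\times X)=\mathfrak{t}\oplus T_xX$: along $\mathfrak{t}$ I have $dF(\xi,0)=(-\xi_X(x),0)$, and (\ref{eqn:contact lift vector}) together with $\alpha_x(\xi_M^\sharp)=0$ and $\alpha_x(\partial_\theta)=1$ yields $\alpha_x(\xi_X(x))=-\Phi^\xi(m_x)$, so the $\mathfrak{t}^\vee$-component of the pulled back covector is $\xi\mapsto r\,\Phi^\xi(m_x)$, that is $r\,\Phi(m_x)$. Along $T_xX$, $dF(0,v)=(d\phi^X_{-\mathbf{s}}(v),v)$ contracts to $r\bigl(\alpha_x\circ d\phi^X_{-\mathbf{s}}-\alpha_x\bigr)(v)=0$, since $\phi^X_{-\mathbf{s}}$ is a contact diffeomorphism fixing $x$. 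Hence every pulled back covector has the form $(\mathbf{s},r\,\Phi(m_x);x,0)$.

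Because the $X$-component vanishes, the push-forward along the compact-fiber projection $\pi_1$ deposits genuine wave front points of the form $(\mathbf{s},r\,\Phi(m_x))$ with $\mathbf{s}\in\mathrm{Per}(\phi^X)$, $x\in X(\mathbf{s})$ and $r>0$; this establishes the inclusion $\subseteq$. The reverse inclusion requires showing that integration over $x\in X(\mathbf{s})$ does not cancel every microlocal contribution at such a covector, and I expect this to be the main technical obstacle. The argument rests on the fact that $\Pi$ is an FIO whose leading symbol is nowhere zero along the diagonal of $\Sigma$, together with a clean-composition statement: the canonical relation of $\Pi$ and the graph of $F$ meet cleanly along $X(\mathbf{s})$, and a stationary phase expansion along each connected component of $X(\mathbf{s})$ produces a nonvanishing leading coefficient. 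The preparatory material of \S \ref{sctn:trace_wave_front} and \S \ref{sctn:functorial} is exactly what is needed to carry out this clean-phase analysis rigorously and hence to upgrade the inclusion to an equality.
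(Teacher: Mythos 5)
Your route is essentially the paper's: you pull $\Pi$ back directly along the flow-twisted map $F(\mathbf{s},x)=(\phi^X_{-\mathbf{s}}(x),x)$ and push forward to $\mathbb{R}^r$, whereas the paper first forms $\mathfrak{U}\in\mathcal{D}'(\mathbb{R}^r\times X\times X)$ with kernel $\Pi(\phi^X_{-\mathbf{s}}(x),y)$ and then pulls back along the diagonal $\Delta(\mathbf{s},x)=(\mathbf{s},x,x)$; since $F^*\Pi=\Delta^*\mathfrak{U}$, the two pipelines are the same composition of pull-back and proper push-forward. Your transpose-differential computation --- the $\mathfrak{t}^\vee$-component $r\,\Phi(m_x)$ coming from $\alpha_x\big(\xi_X(x)\big)=-\Phi^\xi(m_x)$, and the vanishing $T^*_xX$-component from $\alpha$-invariance of $\phi^X_{\mathbf{s}}$ at a fixed point --- reproduces (\ref{eqn:wave_front_U}), (\ref{eqn:wave_front_U_Delta}), (\ref{eqn:wave_front_U_Delta_pi}) exactly.

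Two remarks. First, the sentence saying that points with $\Phi(m_x)=0$ ``contribute only the zero covector to the push-forward'' is the wrong dismissal: if $\Phi(m_x)=0$ then H\"{o}rmander's criterion for pulling back fails and $F^*\Pi$ is not microlocally well-defined there at all. The correct move, which the paper makes explicitly at this step, is to invoke the standing hypothesis $\mathbf{0}\notin\Phi(M)$, which makes the problematic locus empty. Second, the reverse inclusion you flag as the main obstacle is indeed not supplied by the functorial calculus of \cite{hor}, \cite{d}, which gives only upper bounds; but the paper glosses over this as well. Note that \S\ref{sctn:functorial} \emph{is} the paper's proof of this proposition and simply asserts the equalities, so citing it to supply the missing clean-composition argument is circular. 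Your proof is therefore about as complete as the one in the paper; the honest caveat you attach is a genuine gap in both.
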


\begin{cor}
 \label{cor:sing_supp}
 The singular support of $\mathrm{tr}(\mathfrak{U})$ is
 $$\mathrm{SS}\big(\mathrm{tr}(\mathfrak{U})\big)=
 \mathrm{Per}(\phi^X).
 $$
\end{cor}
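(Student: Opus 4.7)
The plan is to deduce Corollary 1.1 directly from Proposition 1.1 by projecting the wave front set onto its base. Recall the standard fact that for any distribution $u \in \mathcal{D}'(N)$, the singular support $\mathrm{SS}(u)$ coincides with the image of $\mathrm{WF}(u) \subseteq T^*N \setminus (0)$ under the canonical projection $T^*N \to N$. Hence it suffices to determine those $\mathbf{s} \in \mathbb{R}^r$ for which the fiber of $\mathrm{WF}\bigl(\mathrm{tr}(\mathfrak{U})\bigr)$ over $\mathbf{s}$ is non-empty.

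By Proposition 1.1, this fiber equals $\{r\,\Phi(m_x) : x \in X(\mathbf{s}),\ r > 0\}$. If $\mathbf{s} \notin \mathrm{Per}(\phi^X)$, then $X(\mathbf{s}) = \emptyset$ and the fiber is empty, so $\mathbf{s} \notin \mathrm{SS}\bigl(\mathrm{tr}(\mathfrak{U})\bigr)$. Conversely, fix $\mathbf{s}_0 \in \mathrm{Per}(\phi^X)$ and pick any $x \in X(\mathbf{s}_0)$. Under the standing assumption $\mathbf{0} \notin \Phi(M)$ in force throughout this subsection, we have $\Phi(m_x) \neq 0$, so $(\mathbf{s}_0,\, r\,\Phi(m_x))$ lies in $\mathrm{WF}\bigl(\mathrm{tr}(\mathfrak{U})\bigr)$ for every $r > 0$. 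This gives a non-zero covector based at $\mathbf{s}_0$, whence $\mathbf{s}_0 \in \mathrm{SS}\bigl(\mathrm{tr}(\mathfrak{U})\bigr)$.

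No serious obstacle arises: the substance of the result is Proposition 1.1 itself, and the corollary is merely the routine passage from wave front set to singular support. The only point worth highlighting is the role of the hypothesis $\mathbf{0} \notin \Phi(M)$, which guarantees that the wave front fiber over each period $\mathbf{s}_0$ is a non-trivial open ray in $\mathfrak{t}^\vee$ rather than collapsing to the excluded zero covector; without this assumption one would only obtain the inclusion $\mathrm{SS}\bigl(\mathrm{tr}(\mathfrak{U})\bigr) \subseteq \mathrm{Per}(\phi^X)$.
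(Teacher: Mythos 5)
Your proof is correct and follows essentially the same route as the paper: the paper computes the wave front set of $\mathrm{tr}(\mathfrak{U})$ in \S\ref{sctn:functorial} via the functorial properties of pull-back and push-forward, and then reads off the singular support as the base projection, exactly as you do; you have merely made explicit the role of $\mathbf{0}\notin\Phi(M)$ in ensuring the fiber over each period is a genuinely non-zero ray, which the paper leaves implicit.
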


\subsubsection{Heisenberg local coordinates}
\label{sctn:heisenberg lc}
Finally, our local scaling asymptotics are expressed in terms of a system $\gamma_x$ of 
Heisenberg local coordinates (HLCS) on
$X$ centered at $x\in X$. We shall refer to \cite{sz} for a precise definition and a complete
discussion of Heisenberg local coordinates (HLC), and simply list some of their salient properties. 

A HLCS centered at $x\in X$ is commonly represented in additive notation,
$$
\gamma_x:(-\pi,\pi)\times B_{2d}(\mathbf{0},\delta)\rightarrow X,\,\,\,\,
(\theta,\mathbf{v})\mapsto x+(\theta,\mathbf{v}),
$$
where $B_{2d}(\mathbf{0},\delta)\subseteq \mathbb{R}^{2d}$ is the open
ball of center the origin and radius $\delta$. We then have:

\begin{enumerate}
 \item the standard $S^1$-action is expressed by a translation in $\theta$;
 \item $\mathbf{v}\in B_{2d}(\mathbf{0},\delta)\mapsto m_x+\mathbf{v}=:\pi \big(x+(\theta,\mathbf{v})\big)$
 ($\theta$ being irrelevant) is a system of local coordinates on $M$ centered at $m_x$;
 \item $\gamma_x$ induces a unitary isomorphism $T_xX\cong \mathbb{R}\oplus \mathbb{R}^{2d}$,
 compatible with the decomposition of $T_xX=V_x\oplus H_x$ as an orthogonal direct sum of the vertical
 and horizontal tangent space.
 \item HLC can be locally and smoothly deformed with the base point $x$: for any $x\in X$, there exist
 an open neighborhood $x\in X'\subseteq X$ and a $\mathcal{C}^\infty$ map 
 $$
 \gamma:X'\times (-\pi,\pi)\times B_{2d}(\mathbf{0},\delta)\rightarrow
 X,
 $$
 such that $\gamma_y(\theta,\mathbf{v})=:\gamma(y,\theta,\mathbf{v})$ is a system of HLC centered at
 $y$, for each $y\in X'$.
\end{enumerate}

 One often writes $x+\mathbf{v}$ for $x+(0,\mathbf{v})$. 

In a HLCS, the universal nature of near-diagonal scaling asymptotics
of certain kernels variously related to the Szeg\"{o} kernel, such as the ones studied in this paper,
is particularly transparent.
In particular, these asymptotics generally involve a universal exponent, given by a quadratic function
on $\mathbb{R}^{2d}\times \mathbb{R}^{2d}$, that we shall now define (following \cite{bsz} and \cite{sz}).

\begin{defn}
 \label{defn:psi2}
Let us define
$\psi_2:\mathbb{R}^{2d}\times \mathbb{R}^{2d}\rightarrow \mathbb{R}$ by setting
$$
\psi_2(\mathbf{v},\mathbf{w})=:-i\,\omega_0(\mathbf{v},\mathbf{w})-\frac{1}{2}\,\|\mathbf{v}-\mathbf{w}\|^2,
$$
where $\omega_0$ is the standard symplectic structure, and $\|\cdot\|$ is the standard Euclidean norm.
\end{defn}

\begin{notn}
\label{notn:representing matrix}
Given $x\in X_\beta(\mathbf{s}_0)$, and a choice of a HLC system centered at $x$,
we shall let $A=A_{m_x}$ be the corresponding 
unitary (i.e., symplectic and orthogonal) matrix representing 
$d_x\phi^X_{-\mathbf{s}_0}:T_{m_x}M\rightarrow T_{m_x}M$. 
Then, since the action is holomorphic and Abelian, $A\,J_{m_x}=J_{m_x}\,A$ and
$A\,\xi_M(m_x)=\xi_M(m_x)$ for every $\xi\in \mathfrak{t}$.
Therefore, we also have
$A\,J_m\big(\xi_M(m_x)\big)=J_m\big(\xi_M(m_x)\big)$.
\end{notn}

\subsection{The statements}

Our main result on the singularities of $\mathrm{tr}(\mathfrak{U})$
can be viewed euphemistically as a \lq directional local trace formula\rq. Before we state it, let us collect here all
of our assumptions:

\bigskip

\noindent
\textbf{General Hypothesis}:
In the previous general setting, let us assume:

\begin{enumerate}
 \item $f_1,\ldots,f_r:M\rightarrow \mathbb{R}$ are $\mathcal{C}^\infty$, Poisson commuting, and compatible
 with the K\"{a}hler structure $(\omega,J)$ (Definition \ref{defn:compatible Hamiltonian});
 \item If $\Phi=:(f_1,\ldots,f_r)^t:M\rightarrow\mathfrak{t}^\vee$, then
$\mathbf{0}\not\in\Phi(M)\subseteq \mathfrak{t}^\vee$;
 \item $\mathbf{s}_0\in \mathrm{Per}\left(\phi^X\right)$;
 \item $\chi:\mathbb{R}^r\rightarrow \mathbb{R}$ is $\mathcal{C}^\infty$ and compactly supported
 in a ball of center the origin and sufficiently small radius $\epsilon>0$ (although it's unnecessary, 
 we may assume that $\chi,\,\widehat{\chi}\ge 0$);
\item $\chi_{\mathbf{s}_0}(\cdot)=:\chi(\cdot -\mathbf{s}_0)$ in (\ref{eqn:smoothing_operator});
 \item $\beta\in \mathfrak{t}^\vee$ has unit norm, and $\beta\in \mathbb{R}_+\cdot\Phi(m_x)$ for some $x\in X(\mathbf{s}_0)$
 (recall that $m_x=\pi(x)$).
 \item $\Phi$ is transverse to $\mathbb{R}_+\cdot \beta$.
\end{enumerate}

\begin{rem}
 \label{rem:convex body}
 As we have remarked, 
Condition 2 ensures that $\Lambda_j\rightarrow\infty$ in $\mathbb{R}^r$, so that every joint eigenvalue has finite multiplicity
(Lemma \ref{lem:eigenvalues drift to infinity}).
\end{rem}

\begin{thm}
 \label{thm:main rapid decrease}
Assume that the General Hypothesis holds, and choose $$D>0,\,\,\,\,\,\,\delta\in \left(0,1/2\right).$$ 
Then, as $\lambda\rightarrow+\infty$, uniformly for
\begin{equation}
 \label{eqn:assumption lower bound distance}
 \mathrm{dist}_X\big(y,X_\beta(\mathbf{s}_0)\big)\ge D\,\lambda^{\delta-1/2},
\end{equation}
(Definition \ref{defn:periods}) we have
$$
\mathcal{S}_\chi(\lambda\,\beta,\mathbf{s}_0,y,y)=O\left(\lambda^{-\infty}\right).
$$
\end{thm}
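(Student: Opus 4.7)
The plan is to express $\mathcal{S}_\chi(\lambda\beta,\mathbf{s}_0,y,y)$ as an oscillatory integral in $\lambda$ and obtain rapid decay by combining complex stationary phase (exploiting the positive imaginary part of the Boutet de Monvel--Sj\"{o}strand phase) with non-stationary phase arguments (integration by parts in $\mathbf{s}$ and $u$). Since $\mathfrak{U}(\mathbf{s})$ preserves $H(X)$, one has the kernel identity $\mathfrak{U}(\mathbf{s},y,y) = \Pi(y_{\mathbf{s}},y)$. I would first insert a smooth cutoff $\rho(y_{\mathbf{s}},y)$ concentrated near the diagonal; the complementary off-diagonal piece contributes $O(\lambda^{-\infty})$ because $\Pi$ is smoothing there and a single integration by parts in $\mathbf{s}$ against $e^{-i\lambda\langle\beta,\mathbf{s}\rangle}$ is iterated. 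For the near-diagonal part I would apply the BdM--S parametrix
\[
\Pi(x,y) \;\equiv\; \int_0^{\infty} e^{i t\,\psi(x,y)}\, s(t,x,y)\, dt \pmod{\mathcal{C}^\infty},
\]
where $\psi$ satisfies $\psi(x,x)=0$, $d_x\psi(x,x) = \alpha_x$, $\mathrm{Im}\,\psi \ge 0$, $\mathrm{Im}\,\psi(x,y)\ge c_0\,\mathrm{dist}_X(x,y)^2$ near the diagonal, and $s$ is a classical symbol of order $d$ in $t$. Rescaling $t=\lambda u$ leads, modulo rapidly decreasing terms, to
\[
\mathcal{S}_\chi(\lambda\beta,\mathbf{s}_0,y,y) \;=\; \lambda\!\int_0^{\infty}\!\!\int_{\mathbb{R}^r} e^{i\lambda\,\Psi_y(\mathbf{s},u)}\, \chi(\mathbf{s}-\mathbf{s}_0)\,\rho(y_{\mathbf{s}},y)\, s(\lambda u, y_{\mathbf{s}}, y)\, d\mathbf{s}\, du,
\]
with phase $\Psi_y(\mathbf{s},u)=u\,\psi(y_{\mathbf{s}},y)-\langle\beta,\mathbf{s}\rangle$.

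Next I would analyze the critical locus of $\Psi_y$. Using $\partial_{s_k}y_{\mathbf{s}}\big|_{\mathbf{s}_0}=-\widetilde{\upsilon}_k(y_{\mathbf{s}_0})$, $d_x\psi(y,y)=\alpha_y$, and the identity $\alpha_y(\widetilde{\upsilon}_k(y))=-f_k(m_y)$ which follows from (\ref{eqn:contact lift vector}), at any real critical point one obtains
\[
0 = \partial_u\Psi_y = \psi(y_{\mathbf{s}},y),\qquad 0 = \partial_{s_k}\Psi_y\big|_{\mathbf{s}=\mathbf{s}_0} = u\,f_k(m_y)-\beta_k.
\]
The first equation forces $y \in X(\mathbf{s}_0)$, while the second gives $u\,\Phi(m_y)=\beta$, hence $\Phi(m_y)\in \mathbb{R}_+\beta$ and $y \in X_\beta$, so critical points occur exactly for $y \in X_\beta(\mathbf{s}_0)$, i.e.\ the locus excluded by the hypothesis.

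I would then split the $u$-integration via cutoffs into three regions: (a) $u$ outside a large compact subinterval $[1/C,C]\subseteq(0,\infty)$; (b) $u\in[1/C,C]$ with $\mathrm{dist}_X(y_{\mathbf{s}},y)\ge\eta$ for a small fixed $\eta>0$; (c) $u\in[1/C,C]$ with $y_{\mathbf{s}}$ in a Heisenberg chart centered at $y$. Region (a) yields $O(\lambda^{-\infty})$ by integration by parts in $\mathbf{s}$ when $u$ is small (the phase gradient is dominated by $-\beta$, of unit norm) and in $u$ when $u$ is large (using positivity of $\mathrm{Im}\,\psi$ and the classical-symbol bounds on $s$). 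Region (b) is $O(\lambda^{-\infty})$ because $\Pi$ is smoothing there. Region (c) is the substantive one: the BdM--S Gaussian estimate gives
\[
\bigl|e^{i\lambda\,\Psi_y(\mathbf{s},u)}\bigr| \;\le\; \exp\!\bigl(-c_1\,\lambda\, u\,\mathrm{dist}_X(y_{\mathbf{s}},y)^2\bigr),
\]
to be combined with integration by parts in $\mathbf{s}$ whenever $u\,\Phi(m_y)-\beta$ is bounded below, so as to absorb the amplitude growth $\lambda^d$.

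The hard part will be turning the hypothesis $\mathrm{dist}_X(y,X_\beta(\mathbf{s}_0))\ge D\lambda^{\delta-1/2}$ into effective bounds in region (c) uniformly in $y$. The key geometric point is a dichotomy: by the transversality of $\Phi$ to $\mathbb{R}_+\beta$ (Remark \ref{rem:equivalent_transversality}) and because $X_\beta(\mathbf{s}_0)=X_\beta\cap X(\mathbf{s}_0)$, there exist constants $c_2,c_3>0$ such that, for $y$ in a fixed tubular neighborhood of $X_\beta(\mathbf{s}_0)$, at least one of
\[
(\mathrm{i})\ \mathrm{dist}_X\bigl(y,X(\mathbf{s}_0)\bigr)\ \ge\ c_2\,\lambda^{\delta-1/2},\qquad (\mathrm{ii})\ \mathrm{dist}\bigl(\Phi(m_y),\mathbb{R}_+\beta\bigr)\ \ge\ c_3\,\lambda^{\delta-1/2}
\]
holds. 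In case (i), the smooth dependence of $\phi^X_{\mathbf{s}}$ on $\mathbf{s}$ and the non-degeneracy of $\phi^X_{\mathbf{s}_0}-\mathrm{id}$ on the normal bundle of $X(\mathbf{s}_0)$ yield $\mathrm{dist}_X(y_{\mathbf{s}},y)\ge c_4\,\lambda^{\delta-1/2}$ uniformly on $\mathrm{supp}(\chi_{\mathbf{s}_0})$ (for $\epsilon$ small), so the Gaussian factor becomes $\exp(-c_5\,\lambda^{2\delta}) = O(\lambda^{-\infty})$. In case (ii), $|u\,\Phi(m_y)-\beta|\ge c_6\,\lambda^{\delta-1/2}$ for every $u\in[1/C,C]$, so the $\mathbf{s}$-gradient of $\Psi_y$ satisfies the same lower bound and $N$-fold integration by parts in $\mathbf{s}$ produces a prefactor $O(\lambda^{d-2N\delta})$, rapidly decreasing as $N\to\infty$. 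Patching (i) and (ii) uniformly with a smooth partition on the $y$-parameter, while controlling derivatives of the amplitude through the Heisenberg coordinates that depend smoothly on the base point (item 4 of \S\ref{sctn:heisenberg lc}), constitutes the main technical difficulty.
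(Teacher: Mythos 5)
Your outline is essentially the same strategy the paper follows — Boutet de Monvel--Sj\"ostrand parametrix, rescaling $t = \lambda u$, compact $u$-localization, and then a dichotomy (distance from $X(\mathbf{s}_0)$ vs.\ distance from $X_\beta$) exactly mirroring the orthogonal splitting (\ref{eqn:normal bundle Xbeta}) of $N_x\big(X_\beta(\mathbf{s}_0)\big)$. Your case (i) is sound: it is precisely the content of Proposition \ref{prop:estimate_distance} (and its Corollary \ref{cor:rapid_decrease_w}), which gives $\mathrm{dist}_X(y_\mathbf{s},y)^2 \gtrsim \|\mathbf{w}\|^2$ uniformly for $\mathbf{s}\in\mathrm{supp}\,\chi_{\mathbf{s}_0}$ (established via HLC and the non-degeneracy of $A-I$ on $\ker(A-I)^\perp$, equation (\ref{eqn:local_coordinates_z_action})), after which either the Gaussian decay $e^{-c\lambda^{2\delta}}$ or iterated $\partial_t$-integration by parts using $\mathrm{Im}\,\psi\ge c_0\,\mathrm{dist}^2$ finishes.

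The genuine gap is in case (ii). You write that because $|u\Phi(m_y)-\beta|\ge c_6\lambda^{\delta-1/2}$ for $u\in[1/C,C]$, ``the $\mathbf{s}$-gradient of $\Psi_y$ satisfies the same lower bound.'' That inference is not correct as stated: near the diagonal
\[
\partial_{\mathbf{s}}\Psi_y(\mathbf{s},u) \;=\; u\,\Phi(m_y)-\beta \;+\; O\!\big(\mathrm{dist}_X(y_\mathbf{s},y)\big),
\]
and the remainder term can be comparable to, or larger than, $c_6\lambda^{\delta-1/2}$. Consequently you cannot perform nonstationary-phase integration by parts in $\mathbf{s}$ across all of region (c) without a further $\lambda$-dependent partition: when $\mathrm{dist}_X(y_\mathbf{s},y)$ is large relative to $\lambda^{\delta-1/2}$ the Gaussian factor must take over, and only where that distance is small does your lower bound on $|\partial_{\mathbf{s}}\Psi_y|$ actually hold. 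The paper handles this precisely by introducing the cutoff $\varsigma_\lambda = \varsigma(\lambda^{1/2-\delta}\,\partial_\tau\Psi_\beta)$ (equations (\ref{eqn:definition of A_lambda})--(\ref{eqn:cut off splits})) and tracking the resulting powers of $\lambda^{1/2-\delta}$ through the transport operator in Lemma \ref{lem:general term bound}; your claimed gain $O(\lambda^{d-2N\delta})$ per $N$-fold integration by parts is achievable, but only after that bookkeeping. Moreover, the paper's passage from the bound on $\|\tau_M(m_x)\|$ to a bound on $\|\tau\|$ requires the vertical ($\theta$) component of $y_{\mathbf{s}_\tau}$ in HLC --- equation (\ref{eqn:local_coordinate_action}), Lemma \ref{lem:non trivial asymptotics}, and the bootstrap pair Lemma \ref{lem:bound tau xi} / Lemma \ref{lem:estimate tau xi} --- because $\tau\mapsto\tau_M(m)$ has a kernel (the $\Xi(m)$-direction); your outline ignores this, which is acceptable for Theorem \ref{thm:main rapid decrease} if case (ii) is completed via the sub-dichotomy above, but is indispensable for the sharp localization in Theorem \ref{thm:main_directional_trace}. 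So: right mechanism, right geometric dichotomy, but the case-(ii) integration by parts is asserted rather than justified, and the correction term $O(\mathrm{dist}_X(y_\mathbf{s},y))$ is the missing issue.
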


Notice that the previous condition may be rewritten
$$
\mathrm{dist}_M\Big(m_y,\pi\big(X_\beta(\mathbf{s}_0)\big)\Big)\ge D\,\lambda^{\delta-1/2},
$$
where $m_y=:\pi(y)$, and that $\pi\big(X_\beta(\mathbf{s}_0)\big)$ is a union of connected components
of $M_\beta(\mathbf{s}_0)$.

Theorem \ref{thm:main rapid decrease} shows that the asymptotics
of $\mathcal{S}_\chi(\lambda\,\beta,\mathbf{s}_0,y,y)$ concentrate in a shrinking neighborhood
of $X_\beta(\mathbf{s}_0)$; this leads to considering appropriate scaling asymptotic near
$X_\beta(\mathbf{s}_0)$, as we shall now make precise. 

It will be proved in \S \ref{sctn:transversality_fixed_loci} that
$X_\beta(\mathbf{s}_0)$ is a submanifold of $X$, and its normal space
at any $x\in X_\beta(\mathbf{s}_0)$ splits naturally as an orthogonal direct sum
\begin{eqnarray}\label{eqn:normal bundle Xbeta}
\lefteqn{N_x\big(X_\beta(\mathbf{s}_0)\big)}\\
&\cong&
\ker \left(d_{m_x}\phi^M_{\mathbf{s}_0}-\mathrm{id}_{T_{m_x}M}\right)^\perp\oplus^\perp
\big[J_{m_x}\circ \mathrm{val}_{m_x}\big(\ker \Phi(m_x)\big)\big];\nonumber
\end{eqnarray}
here $T_{m_x}M$
and its subspaces are viewed as vector subspaces of $T_xX$, by identifying 
$T_{m_x}M$ with the horizontal tangent
space $H_x\subset T_xX$.

In addition, as recalled in \S \ref{sctn:heisenberg lc}, in the neighborhood of any $x_0\in X_\beta(\mathbf{s}_0)$
we can find a smoothly varying family of HLCS's. 
Thus, locally near $x_0$, any $z\in X$ within a distance $D\,\lambda^{\delta-1/2}$
from $X_\beta(\mathbf{s}_0)$ can be written 
$z=x+\mathbf{v}$, for unique $x\in X_\beta(\mathbf{s}_0)$ and $\mathbf{v}\in N_x\big(X_\beta(\mathbf{s}_0)\big)$,
with $\|\mathbf{v}\|\le D'\,\lambda^{\delta-1/2}$ for some $D'>0$ (we may take $D'=D+\epsilon'$ for any
$\epsilon'>0$). 
In turn, in view of the previous
direct sum decomposition we can also write
\begin{equation}
\label{eqn:vtauxi}
 \mathbf{v}=\mathbf{w}+\mathbf{n}
\end{equation}
where 
\begin{equation}
 \label{eqn:decomposition normal space m}
\mathbf{w}\in \ker \left(d_{m_x}\phi^M_{\mathbf{s}}-\mathrm{id}_{T_{m_x}M}\right)^\perp\,\,\,\,\mathrm{and}\,\,\,\,
\mathbf{n}=J_{m_x}\big(\xi_M(m_x)\big)
\end{equation}
with $\xi\in \ker\Phi(m)$; here both $\mathbf{w}$ and $\xi$ are also uniquely determined, and both
have norms $O\left(\lambda^{\delta-1/2}\right)$.

In order to obtain the desired scaling asymptotics in a shrinking neighborhood
of $X_\beta(\mathbf{s}_0)$, we shall replace the local
parametrization $y=x+\mathbf{v}$ by its rescaled version 
$$y_\lambda=:x+\mathbf{v}/\sqrt{\lambda},$$
where now $\|\mathbf{v}\|=O\left(\lambda^{\delta}\right)$.

\begin{thm}
\label{thm:main_directional_trace}
Assume that the General Hypothesis holds, and choose arbitrary constants 
$$D>0,\,\,\,\,\,\,\delta\in (0,1/6).$$
Then, uniformly in $x\in X_\beta(\mathbf{s}_0)$ and 
$\mathbf{v}=\mathbf{w}+\mathbf{n}\in N_x\big(X_\beta(\mathbf{s}_0)\big)$ as in
(\ref{eqn:vtauxi}) and 
(\ref{eqn:decomposition normal space m}) 
with $\|\mathbf{v}\|\le D\,\lambda^{\delta}$, the following
asymptotic expansion holds:
\begin{eqnarray}
 \label{eqn:espansione integrated teorema}
\lefteqn{\mathcal{S}_\chi(\lambda\,\beta,\mathbf{s}_0,y_\lambda,y_\lambda)}\nonumber\\
&\sim&
\frac{2^{\frac{r+1}{2}}\,\pi}{\|\Phi(m_x)\|}\cdot\left(\frac{\lambda}{\pi\,\|\Phi(m_x)\|}\right)^{d+\frac{1-r}{2}}\,
\frac{e^{-i\,\lambda\,\langle\beta,\mathbf{s}_0\rangle}}{\mathcal{D}(m)}\,
e^{\left[\psi_2(A\mathbf{w},\mathbf{w})-2\,\|\mathbf{n}\|^2\right]/\|\Phi(m_x)\|}\nonumber\\
&&
\cdot \sum_{\ell\ge 0}\lambda^{-\ell/2}\,\mathcal{R}_\ell (x;\mathbf{n},\mathbf{w}),
\end{eqnarray}
where $\psi_2$ is as in Definition \ref{defn:psi2}, $A$ is defined in Notation \ref{notn:representing matrix} 
in \S \ref{sctn:heisenberg lc},
and
$\mathcal{R}_\ell (x;\cdot,\cdot)$ is a polynomial of degree $\le 3\ell$, and parity $(-1)^\ell$.
We have $\mathcal{R}_0=\chi(\mathbf{0})$.
\end{thm}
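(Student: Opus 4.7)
The plan is to combine the Boutet de Monvel--Sj\"ostrand microlocal description of the Szeg\"o projector with the method of complex stationary phase, performed in Heisenberg local coordinates (HLC) centered at a point $x\in X_\beta(\mathbf{s}_0)$. Starting from (\ref{eqn:smoothing_operator}) and using $\mathfrak{U}(\mathbf{s})=(\phi^X_{-\mathbf{s}})^*\circ\Pi$, I would insert the parametrix
\[
\Pi(x',y')=\int_{0}^{+\infty} e^{iu\,\psi(x',y')}\,s(x',y',u)\,du,
\]
with $\psi$ the standard complex phase of positive type vanishing on the diagonal and $s(x',y',u)\sim\sum_{k\ge 0}u^{d-k}s_k(x',y')$. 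Setting $u=\lambda t$ and $\mathbf{s}=\mathbf{s}_0+\boldsymbol{\sigma}$, and pulling the constant $e^{-i\lambda\langle\beta,\mathbf{s}_0\rangle}$ out of the integral over $\mathbf{s}$, casts $\mathcal{S}_\chi(\lambda\beta,\mathbf{s}_0,y_\lambda,y_\lambda)$ as
\[
\lambda\,e^{-i\lambda\langle\beta,\mathbf{s}_0\rangle}\int_0^{+\infty}\!\!\!\int_{\mathbb{R}^r}e^{i\lambda\,\Psi_\lambda(\boldsymbol{\sigma},t)}\,a_\lambda(\boldsymbol{\sigma},t)\,d\boldsymbol{\sigma}\,dt,
\]
with large parameter $\lambda$, where $\Psi_\lambda(\boldsymbol{\sigma},t)=t\,\psi\bigl(\phi^X_{-\mathbf{s}_0-\boldsymbol{\sigma}}(y_\lambda),\,y_\lambda\bigr)-\langle\beta,\boldsymbol{\sigma}\rangle$ and $a_\lambda$ incorporates $\chi(\boldsymbol{\sigma})$, a cutoff in $t$, and the rescaled symbol $s(\cdot,\cdot,\lambda t)$.

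The next step is localization. By Theorem \ref{thm:main rapid decrease}, contributions from points beyond distance $O(\lambda^{\delta-1/2})$ of $X_\beta(\mathbf{s}_0)$ are $O(\lambda^{-\infty})$; combined with the local freeness of $\phi^X$ on $X_\beta$ (Remark \ref{rem:equivalent_transversality}), this allows a partition of unity that confines the integration to a neighborhood of the unique complex critical point $(\boldsymbol{\sigma},t)=(\mathbf{0},t_0)$ with $t_0=1/\|\Phi(m_x)\|$: indeed $\partial_t\Psi_0=\psi=0$ forces $\phi^X_{-\mathbf{s}_0-\boldsymbol{\sigma}}(y_\lambda)=y_\lambda$, hence $\boldsymbol{\sigma}=\mathbf{0}$ by local freeness, while $\partial_{\boldsymbol{\sigma}}\Psi_0=0$ gives $t_0\,\Phi(m_x)=\beta$, hence $t_0=1/\|\Phi(m_x)\|$ since $\|\beta\|=1$. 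Passing to a Heisenberg chart $\gamma_x$ and exploiting that $x$ is fixed by $\phi^X_{-\mathbf{s}_0}$, I would write $y_\lambda=x+\mathbf{v}/\sqrt{\lambda}$ with $\mathbf{v}=\mathbf{w}+\mathbf{n}$ as in (\ref{eqn:vtauxi})--(\ref{eqn:decomposition normal space m}); the unitary matrix $A$ of Notation \ref{notn:representing matrix} enters through $\phi^X_{-\mathbf{s}_0}(y_\lambda)\approx x+(0,A\mathbf{v}/\sqrt{\lambda})$, whereas the extra $\boldsymbol{\sigma}$-flow contributes $\phi^X_{-\boldsymbol{\sigma}}(x)\approx x+\bigl(\Phi^{\boldsymbol{\sigma}}(m_x),\,-\mathrm{val}_{m_x}(\boldsymbol{\sigma})\bigr)$ by Remark \ref{rem:contact lift vector}. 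Plugging these into the universal HLC expansion of $\psi$ (cf.\ \cite{sz}, \cite{bsz}) and Taylor-expanding around the critical point converts $\Psi_\lambda$ into the sum of a nondegenerate quadratic form in $(\boldsymbol{\sigma},t-t_0)$ and a rescaled quadratic piece in $\mathbf{v}$ governed by $\psi_2$, modulo higher-order remainders that feed the subleading terms.

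Next I would decompose $\boldsymbol{\sigma}=\boldsymbol{\sigma}^\perp+\boldsymbol{\sigma}^\parallel$ with $\boldsymbol{\sigma}^\parallel\in\ker\Phi(m_x)$ (the natural splitting dictated by transversality) and apply complex stationary phase. The $(t,\boldsymbol{\sigma}^\perp)$ block is nondegenerate because its off-diagonal entry is $\|\Phi(m_x)\|\ne 0$; together with the $\lambda\cdot\lambda^d\cdot\lambda^{-(r+1)/2}=\lambda^{d+(1-r)/2}$ scaling coming from $u=\lambda t$, the leading symbol $s_0\sim\pi^{-d}$, and the $(r+1)$-dimensional Gaussian factor from stationary phase, this reproduces the explicit scalar prefactor $2^{(r+1)/2}\pi\,\|\Phi(m_x)\|^{-(d+(3-r)/2)}\,\pi^{-(d+(1-r)/2)-1}\lambda^{d+(1-r)/2}$ appearing in the statement. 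The residual Gaussian integration in $\boldsymbol{\sigma}^\parallel$ is controlled by the Gram matrix $D(m_x)$ of Definition \ref{defn:matrice prodotto scalare indotto}, since by Remark \ref{rem:contact lift vector} the horizontal displacement of $\phi^X_{-\boldsymbol{\sigma}^\parallel}(x)$ is exactly $-\mathrm{val}_{m_x}(\boldsymbol{\sigma}^\parallel)$; its determinant contributes $\mathcal{D}(m_x)^{-1}$. After completing the square in $\boldsymbol{\sigma}^\parallel$ against the linear coupling with $\mathbf{n}=J_{m_x}\bigl(\xi_M(m_x)\bigr)$, the $\mathbf{n}$-dependent cross terms cancel and what remains in the exponential is precisely $\bigl[\psi_2(A\mathbf{w},\mathbf{w})-2\|\mathbf{n}\|^2\bigr]/\|\Phi(m_x)\|$, the $A$-twist in front of $\mathbf{w}$ arising from $d_x\phi^X_{-\mathbf{s}_0}$ acting on the horizontal perturbation $\mathbf{w}/\sqrt{\lambda}$ and the $-2\|\mathbf{n}\|^2$ from the square completion. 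The higher-order terms $\mathcal{R}_\ell(x;\mathbf{n},\mathbf{w})$ are generated by the Taylor remainders of $\Psi_\lambda$ beyond quadratic order, by the subsymbols $s_k$, and by the standard corrections of complex stationary phase: these produce polynomials of degree at most $3\ell$ in $(\mathbf{n},\mathbf{w})$ by the usual three-to-one trade between fluctuations and negative half-powers of $\lambda$, and of parity $(-1)^\ell$ by the $\mathbb{Z}/2$-symmetry $(\boldsymbol{\sigma},\mathbf{v})\leftrightarrow(-\boldsymbol{\sigma},-\mathbf{v})$ of the leading integrand, while $\mathcal{R}_0=\chi(\mathbf{0})$ follows from evaluating $a_0$ at the critical point. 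The main obstacle is the clean block-diagonalization of the full Hessian of $\Psi_0$ that makes the $D(m_x)$ factor transparent and the explicit square-completion against $\mathbf{n}$ that yields the precise coefficient $-2/\|\Phi(m_x)\|$, together with the bookkeeping of all numerical constants, signs, and the interplay between HLC, the $A$-twist, and the semiclassical rescaling $y_\lambda=x+\mathbf{v}/\sqrt{\lambda}$.
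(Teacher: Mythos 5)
Your proposal follows essentially the same route as the paper's proof: insert the Boutet de Monvel--Sj\"ostrand parametrix, rescale $t$ and $\tau$, localize near $X_\beta(\mathbf{s}_0)$ via Theorem \ref{thm:main rapid decrease}, expand the phase in Heisenberg local coordinates, decompose $\tau$ orthogonally along $\Xi(m_x)$ and $\ker\Phi(m_x)$, apply stationary phase in the $(t,\langle\tau,\Xi(m_x)\rangle)$ block whose Hessian has determinant $-\|\Phi(m_x)\|^2$, and then evaluate the residual $\ker\Phi(m_x)$ Gaussian with Gram matrix $D(m_x)$, producing the $\mathcal{D}(m_x)^{-1}$ factor, the $A$-twist on $\mathbf{w}$, and the $-2\|\mathbf{n}\|^2/\|\Phi(m_x)\|$ exponent by square completion, with the $3\ell$ degree bound and $(-1)^\ell$ parity arising exactly as you describe. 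The only cosmetic difference is organizational: the paper keeps the real oscillatory phase $i\sqrt{\lambda}\,(\|\Phi(m_x)\|\,t-1)u$ separated from the order-one Gaussian amplitude $e^{t\,\mathcal{E}_x}$ and applies the real stationary phase lemma followed by direct Gaussian integration in $\eta$, whereas you frame everything as a single complex stationary phase with a block-diagonal Hessian, but the computation is the same.
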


Notice that, with $\mathbf{w}$ and $\mathbf{n}$ as in (\ref{eqn:decomposition normal space m}),
we have for some constant $a>0$
\begin{eqnarray*}
 \Re\left(\psi_2(A\mathbf{w},\mathbf{w})-2\,\|\mathbf{n}\|^2\right)
 &=&-\frac{1}{2}\,\|A\mathbf{w}-\mathbf{w}\|^2-2\,\|\mathbf{n}\|^2\\
 &\le& -a\,\left(\|\mathbf{w}\|^2+\|\mathbf{n}\|^2\right);
\end{eqnarray*}
therefore (\ref{eqn:espansione integrated teorema}) describes, in rescaled coordinates, an exponential decrease of
$\mathcal{S}_\chi(\lambda\,\beta,\mathbf{s}_0,y_\lambda,y_\lambda)$ along normal directions to
$X_\beta(\mathbf{s}_0)$.

Inserting the local asymptotics in Theorem \ref{thm:main_directional_trace} in (\ref{eqn:integral_kernel_S})
we obtain a global asymptotic expansion for 
$\mathcal{F}\big(\chi_{\mathbf{s}_0}\cdot \mathrm{tr}(\mathfrak{U})\big)(\lambda\,\beta)$.
Before stating this, we need to introduce a further Poincar\'{e} type invariant. Given $m\in M(\mathbf{s}_0)$,
let $$\kappa_{m,\mathbf{s}_0}:N_m\big(M(\mathbf{s}_0)\big)\rightarrow N_m\big(M(\mathbf{s}_0)\big)$$ be the restriction of 
$\mathrm{id}_{T_mM}-\mathrm{d}_m\phi^M_{-\mathbf{s}_0}$. Then $\kappa_{m,\mathbf{s}_0}$ is an automorphism,
and its determinant 
\begin{equation}
 \label{eqn:determinant fixed locus}
 \mathfrak{k}(m,\mathbf{s}_0)=:\det\big(\kappa_{m,\mathbf{s}_0}\big)
\end{equation}
is locally constant on $M(\mathbf{s}_0)$.

\begin{defn}
 \label{defn:determinant fixed locus component}
 Let $M_\beta(\mathbf{s}_0)_j$, $1\le j\le b$, where $b=b(\beta,\mathbf{s}_0)$, 
denote the connected components of $\pi\big(X_\beta(\mathbf{s}_0)\big)$;
these are some of the connected components of $M_\beta(\mathbf{s}_0)$, but perhaps not all of them.
We shall denote by $ \mathfrak{c}_j(\mathbf{s}_0)\in \mathbb{C}$ the constant value of
$\mathfrak{k}(m,\mathbf{s}_0)$ on $M_\beta(\mathbf{s}_0)_j$. Also, let $c_j$ 
(respectively, $f_j=:d-c_j$)
be the complex codimension (respectively, complex dimension) of
of $M(\mathbf{s}_0)$ in $M$ along $M_\beta(\mathbf{s}_0)_j$, that is, the complex codimension
(respectively, dimension)
in $M$ of the unique connected component of $M(\mathbf{s}_0)$ containing $M_\beta(\mathbf{s}_0)_j$.
\end{defn}

\begin{cor}
 \label{cor:fourier transform}
Under the same assumptions as in Theorem \ref{thm:main_directional_trace}, and with the notation
(\ref{eqn:fourier_transform_0}), we have 
$$
\mathcal{F}\big(\chi_{\mathbf{s}_0}\cdot \mathrm{tr}(\mathfrak{U})\big)(\lambda\,\beta)
=\sum_{j=1}^b\mathcal{F}_j\big(\chi_{\mathbf{s}_0}\cdot \mathrm{tr}(\mathfrak{U})\big)(\lambda\,\beta),
$$
where each summand admits an asymptotic expansion
\begin{eqnarray*}
 \mathcal{F}_j\big(\chi_{\mathbf{s}_0}\cdot \mathrm{tr}(\mathfrak{U})\big)(\lambda\,\beta)\sim
\frac{2\pi}{\mathfrak{c}_j(\mathbf{s}_0)}\,e^{-i\,\lambda\,\langle\beta,\mathbf{s}_0\rangle}
\,\left(\frac{\lambda}{\pi}\right)^{f_j+1-r}\cdot\sum_{k\ge 0}\lambda^{-k}\,\mathcal{U}_{jk}(\mathbf{s}_0,\beta),
\end{eqnarray*}
with the leading order term given by
\begin{equation*}
 \mathcal{U}_{j0}(\mathbf{s}_0,\beta)=:\chi(\mathbf{0})\cdot
 \int_{M_\beta(\mathbf{s}_0)_j}\dfrac{1}{\|\Phi(m)\|^{f_j+2-r}}\,\frac{1}{\mathcal{D}(m)}\,
\mathrm{d}V_{M_\beta(\mathbf{s}_0)_j}(m).
\end{equation*}
\end{cor}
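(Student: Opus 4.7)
The plan is to start from the identity (\ref{eqn:integral_kernel_S}), which expresses $\mathcal{F}\bigl(\chi_{\mathbf{s}_0}\cdot \mathrm{tr}(\mathfrak{U})\bigr)(\lambda\beta)$ as $\int_X \mathcal{S}_\chi(\lambda\beta,\mathbf{s}_0,y,y)\,\mathrm{d}V_X(y)$, and to combine Theorem \ref{thm:main rapid decrease} (to localize) with Theorem \ref{thm:main_directional_trace} (to evaluate the local scaling asymptotics). By Theorem \ref{thm:main rapid decrease} the integrand is $O(\lambda^{-\infty})$ outside a tube of radius $D\lambda^{\delta-1/2}$ around $X_\beta(\mathbf{s}_0)$, so modulo a negligible remainder the integration reduces to this shrinking tube. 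Since $\pi\bigl(X_\beta(\mathbf{s}_0)\bigr)$ is the disjoint union of the compact components $M_\beta(\mathbf{s}_0)_j$ (Definition \ref{defn:determinant fixed locus component}), a partition of unity subordinate to these components yields the decomposition $\mathcal{F}=\sum_{j=1}^{b}\mathcal{F}_j$, with each $\mathcal{F}_j$ localized in a shrinking tube around the preimage of $M_\beta(\mathbf{s}_0)_j$ in $X$.

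On each such tube I would introduce tubular Heisenberg coordinates, writing $y=x+\mathbf{v}$ for unique $x$ in the corresponding component of $X_\beta(\mathbf{s}_0)$ and $\mathbf{v}=\mathbf{w}+\mathbf{n}\in N_x\bigl(X_\beta(\mathbf{s}_0)\bigr)$ as in (\ref{eqn:vtauxi})--(\ref{eqn:decomposition normal space m}). The volume form factors as $\mathrm{d}V_{X_\beta(\mathbf{s}_0)_j}(x)\,\mathrm{d}\mathbf{v}\cdot\bigl(1+O(\|\mathbf{v}\|)\bigr)$. Rescaling $\mathbf{v}=\mathbf{v}'/\sqrt{\lambda}$ produces a Jacobian $\lambda^{-(2c_j+r-1)/2}$, since the real dimension of $N_x\bigl(X_\beta(\mathbf{s}_0)\bigr)$ equals $2c_j+(r-1)$ by (\ref{eqn:normal bundle Xbeta}); combined with the prefactor $\lambda^{d+(1-r)/2}$ from (\ref{eqn:espansione integrated teorema}), this yields the overall power $\lambda^{f_j+1-r}$ as required. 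The smooth Jacobian correction and the Taylor expansions of $A_{m_x}$, $\mathcal{D}(m_x)$, and the base geometry contribute only to the subleading orders $\mathcal{U}_{jk}$, $k\ge 1$, via standard stationary-phase-style bookkeeping, and the $S^1$-invariance of the integrand together with $\mathrm{d}V_X=(\alpha/2\pi)\wedge\pi^*(\mathrm{d}V_M)$ reduces the base integral over $X_\beta(\mathbf{s}_0)_j$ to one over $M_\beta(\mathbf{s}_0)_j$.

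The leading coefficient then reduces to two decoupled Gaussian integrals in the rescaled normal variables $(\mathbf{w},\mathbf{n})$. Over the complex normal space $N_{m_x}M(\mathbf{s}_0)$, I would rewrite $\psi_2(A\mathbf{w},\mathbf{w})=\bar{\mathbf{w}}^{T}(A-I)\mathbf{w}$ (using the unitarity of $A$ and the explicit form of $\psi_2$) and apply the standard complex Gaussian formula to obtain $\pi^{c_j}\|\Phi(m_x)\|^{c_j}/\mathfrak{c}_j(\mathbf{s}_0)$; here the identification of $\det_{\mathbb{C}}(I-A)$ with $\mathfrak{c}_j(\mathbf{s}_0)$ uses that $A$ restricted to $N_{m_x}M(\mathbf{s}_0)$ coincides with $d_{m_x}\phi^M_{-\mathbf{s}_0}$. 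For the $\mathbf{n}$-integral I parameterize $\mathbf{n}=J_{m_x}\xi_M(m_x)$ with $\xi\in\ker\Phi(m_x)$, so that $\|\mathbf{n}\|^2=\xi^{T}D(m_x)\xi$ and the Jacobian is $\mathcal{D}(m_x)$; the real Gaussian in $\xi$ yields $(\pi\|\Phi(m_x)\|/2)^{(r-1)/2}$, with the $\mathcal{D}$-factors cancelling, so that only the $1/\mathcal{D}(m_x)$ from (\ref{eqn:espansione integrated teorema}) survives. Collecting all powers of $2,\pi,\|\Phi(m)\|$ produces $\mathcal{U}_{j0}$ as stated; the parity $(-1)^\ell$ of $\mathcal{R}_\ell$ against the evenness of the Gaussian weight kills odd $\ell$, leaving only integer powers $\lambda^{-k}$. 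The main obstacle is not a deep one but a bookkeeping one: reconciling the complex Gaussian with the definition of $\mathfrak{c}_j$, and carefully matching the many prefactors in $2,\pi,\|\Phi\|,\mathcal{D}$ emerging from the rescaling, the two Gaussian integrations, and the normalization of $\mathrm{d}V_X$.
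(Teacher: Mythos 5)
Your proposal is correct and follows the paper's own proof essentially step by step: localize via Theorem \ref{thm:main rapid decrease}, pass to tubular HLC adapted to $N_x\big(X_\beta(\mathbf{s}_0)\big)\cong\mathbb{C}^{c_j}\oplus\mathbb{R}^{r-1}$, rescale by $1/\sqrt{\lambda}$ to pick up the Jacobian $\lambda^{-c_j-(r-1)/2}$ and convert the $\lambda^{d+(1-r)/2}$ prefactor of Theorem \ref{thm:main_directional_trace} into $\lambda^{f_j+1-r}$, kill odd half-powers by the parity of $\mathcal{R}_\ell$, and evaluate the two decoupled Gaussians (complex in $\mathbf{w}$, real in $\mathbf{n}$) exactly as in the paper, with the circle fiber absorbed by the $\alpha/2\pi$ normalization. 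The only cosmetic difference is that you re-parametrize the $\mathbf{n}$-Gaussian through $\xi\in\ker\Phi(m_x)$ and cancel the resulting $\mathcal{D}$ Jacobian against the $1/\sqrt{\det D}$ from the non-isotropic Gaussian, whereas the paper integrates the isotropic Gaussian directly in $\mathbf{n}$ and lets the $1/\mathcal{D}$ from (\ref{eqn:espansione integrated teorema}) pass through untouched — the two bookkeepings are of course equivalent.
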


\noindent
Here $\mathrm{d}V_{M_\beta(\mathbf{s}_0)_j}$ is the Riemannian volume density on
$M_\beta(\mathbf{s}_0)_j$.

\begin{rem}
 For the case $r=1$, see \cite{p_ijgmmp}.
\end{rem}

\section{Preliminaries}

\subsection{The moment map}

In the following, let $\phi^M:\mathbb{R}^r\times M\rightarrow M$ be an Hamiltonian and holomorphic action,
with moment map $\Phi=(f_1,\ldots,f_r)^\mathrm{t}:M\rightarrow \mathbb{R}^r$, such that 
$\mathbf{0}\not\in \Phi(M)$. 

\subsubsection{$\Lambda_j\rightarrow\infty$ and $\mathrm{tr}(\mathfrak{U})$ as a temperate distribution}
\label{sctn:trace_wave_front}

Our first remark is that under the given assumption the joint eigenvalues $\Lambda_j$ drift to infinity
in $\mathbb{R}^r$ as $j\rightarrow+\infty$:

\begin{lem}
 \label{lem:eigenvalues drift to infinity}
 Given that $\mathbf{0}\not\in \Phi(M)$, we have $\Lambda_j\rightarrow\infty$ as $j\rightarrow+\infty$.
\end{lem}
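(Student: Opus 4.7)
The plan is to reduce the multi-dimensional statement to the well-understood one-dimensional situation by choosing a direction $\eta \in \mathfrak{t}$ along which $\Phi$ is strictly positive, and then invoking the standard Weyl law for first-order positive self-adjoint Toeplitz operators.

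First, I would exploit the hypothesis $\mathbf{0} \not\in \Phi(M)$ together with the compactness of $M$ (hence of $\Phi(M)\subseteq \mathfrak{t}^\vee$) to produce a separating hyperplane: since the origin lies outside the compact set $\Phi(M)$, a standard convexity/separation argument (or simply taking $\eta$ proportional to an element of $\Phi(M)$ closest to $\mathbf{0}$, or more generally invoking Hahn-Banach) furnishes $\eta \in \mathfrak{t}$ and a constant $c>0$ with
\[
\Phi^\eta(m) \;=\; \langle \Phi(m),\eta\rangle \;\geq\; c \qquad \text{for every } m\in M.
\]

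Next, I would form the self-adjoint first-order Toeplitz operator $\langle\mathfrak{T},\eta\rangle=\sum_k \eta_k\,\mathfrak{T}_k$ on $H(X)$; by the discussion preceding the lemma this is the restriction to $H(X)$ of $i\,\widetilde{\upsilon}_{\Phi^\eta}$, with principal symbol $\mathfrak{s}(x,r\alpha_x)=r\,\Phi^\eta(\pi(x))$ on the symplectic cone $\Sigma$. By the positivity of $\Phi^\eta$, this symbol is bounded below by $r\,c>0$, so $\langle\mathfrak{T},\eta\rangle$ is an elliptic, positive, first-order self-adjoint Toeplitz operator. The Boutet de Monvel–Guillemin theory then guarantees that its spectrum consists of finite-multiplicity eigenvalues tending to $+\infty$ (a Weyl law of order $d$ applies, but all I need is the divergence).

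Finally, since $(e_j)$ is a complete orthonormal system of joint eigenvectors and $\langle \mathfrak{T},\eta\rangle e_j = \langle\Lambda_j,\eta\rangle\,e_j$, the sequence $\langle \Lambda_j,\eta\rangle$ exhausts the spectrum of $\langle\mathfrak{T},\eta\rangle$ with multiplicity, and therefore $\langle\Lambda_j,\eta\rangle\to +\infty$. The elementary bound $|\langle \Lambda_j,\eta\rangle|\leq \|\Lambda_j\|\cdot\|\eta\|$ then forces $\|\Lambda_j\|\to+\infty$ in $\mathbb{R}^r$, which is the claim. As a byproduct, each joint eigenvalue $\beta\in\mathbb{R}^r$ has finite multiplicity (since all of its pre-images under $j\mapsto\Lambda_j$ must sit in the finite-multiplicity eigenspace of $\langle\mathfrak{T},\eta\rangle$ at $\langle\beta,\eta\rangle$), so that $\mathrm{tr}(\mathfrak{U})$ in (\ref{eqn:distributional trace Abelian}) is indeed a well-defined temperate distribution on $\mathbb{R}^r$. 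The only mildly delicate point is the separation step producing $\eta$, but this is purely geometric and not an analytic obstacle; the work is entirely done by positivity of the principal symbol plus the $r=1$ Weyl law.
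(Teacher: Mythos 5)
Your route is genuinely different from the paper's, and as written it has a gap at the separation step. The paper forms the norm operator $\|\mathfrak{T}\|:=\big(\sum_{k=1}^r\mathfrak{T}_k^2\big)^{1/2}$, a first-order self-adjoint Toeplitz operator with eigenvalue $\|\Lambda_j\|$ on $e_j$ and principal symbol $r\,\|\Phi(m_x)\|$ on $\Sigma$; positivity of this symbol is the literal content of $\mathbf{0}\not\in\Phi(M)$, and $\|\Lambda_j\|\to\infty$ then follows from the Boutet de Monvel--Guillemin theory \cite{bdm-g}. You instead project onto a single direction $\eta\in\mathfrak{t}$, which requires $\langle\Phi(m),\eta\rangle\ge c>0$ for all $m$, i.e.\ a hyperplane strictly separating $\mathbf{0}$ from $\Phi(M)$.

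That separation is \emph{not} a formal consequence of $\mathbf{0}\not\in\Phi(M)$ plus compactness. A point can lie outside a compact set $K\subset\mathbb{R}^r$ yet inside its convex hull (e.g.\ $K$ a sphere about the origin), in which case no such $\eta$ exists. Hahn--Banach separates a point from a \emph{convex} set, so invoking it here is not licit, and the recipe \lq direction towards the nearest point of $\Phi(M)$\rq\ produces a supporting hyperplane only if $\Phi(M)$ is itself convex. The gap is fillable in the present setting: compatibility forces $\phi^M$ to act by holomorphic isometries of the compact K\"ahler manifold $(M,J,\omega)$, so the $\mathbb{R}^r$-action has compact, hence toral, closure in the isometry group, and the Atiyah/Guillemin--Sternberg convexity theorem \cite{at}, \cite{guillemin-sternberg} then guarantees that $\Phi(M)$ is convex. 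But this is a substantial geometric input that you neither invoked nor flagged, and it is precisely what the paper's choice of $\|\mathfrak{T}\|$ avoids: the norm operator has positive symbol exactly because $\Phi$ is nowhere zero, with no convexity required. If you want to keep the one-dimensional reduction, you must cite the convexity theorem explicitly; otherwise the $\|\mathfrak{T}\|$ route is the more economical use of the hypothesis.
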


\begin{proof}
 [Proof of Lemma \ref{lem:eigenvalues drift to infinity}]
 The self-adjoint first order Toeplitz operator $\mathfrak{T}_k$
 has eigenvalue $\lambda_{kj}\in \mathbb{R}$ on $e_j$. 
 Therefore, the second order Toeplitz operator $\mathfrak{T}_k^2\ge 0$ has eigenvalue $\lambda_{jk}^2$ on
 $e_j$; its principal symbol is
 $$
 \sigma_{\mathfrak{T}_k^2}\big((x,r\alpha_x)\big)=\sigma_{\mathfrak{T}_k}\big((x,r\alpha_x)\big)^2
 =r^2\,f_k(m_x)^2.
 $$
 Let us define
 \begin{equation}
  \label{eqn:toplitz norm operator}
  \|\mathfrak{T}\|=:\left(\sum_{k=1}^r\mathfrak{T}_k^2\right)^{1/2}.
 \end{equation}
 Then $\|\mathfrak{T}\|$ is a first order Toeplitz operator, with eigenvalue $\|\Lambda_j\|$ on $e_j$;
by the theory of \cite{bdm-g} and the corresponding results for pseudodifferential operators \cite{seeley}, \cite{shubin},
its principal symbol is 
 $$
 \sigma_{\|\mathfrak{T}\|}(x,r\alpha_x)=r\,\left(\sum_{k=1}^r f(m_x)^2\right)^{1/2}=r\,\|\Phi(m_x)\|>0.
 $$
 It follows that $\|\Lambda_j\|\rightarrow+\infty$ as $j\rightarrow+\infty$ \cite{bdm-g}.
 
\end{proof}

\begin{cor}
\label{cor:joint eigenvalues finite multipliplity}
Given that $\mathbf{0}\not\in \Phi(M)$, every joint eigenvalue has finite multiplicity. 
\end{cor}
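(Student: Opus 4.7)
The plan is to deduce the corollary as an essentially immediate consequence of Lemma \ref{lem:eigenvalues drift to infinity}. The basic observation is that the joint eigenvalues, listed with multiplicity, form a sequence $(\Lambda_j)_{j\ge 1}$ in $\mathbb{R}^r$, and the multiplicity of a given $\beta \in \mathbb{R}^r$ as a joint eigenvalue is precisely the cardinality of $\{j : \Lambda_j = \beta\}$. If this set were infinite for some $\beta$, then the sequence $(\Lambda_j)$ would have a bounded subsequence (constant equal to $\beta$), contradicting $\|\Lambda_j\| \to \infty$.

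Concretely, I would argue by contradiction. Suppose a fixed $\beta \in \mathbb{R}^r$ has infinite multiplicity as a joint eigenvalue of the commuting system $(\mathfrak{T}_k)$. Then the simultaneous eigenspace for $\beta$ is infinite-dimensional, and I can extract from the chosen orthonormal basis $(e_j)$ of joint eigenvectors an infinite subsequence $(e_{j_\nu})_{\nu\ge 1}$ with $\Lambda_{j_\nu} = \beta$ for all $\nu$. In particular $\|\Lambda_{j_\nu}\| = \|\beta\|$ is bounded in $\nu$. But Lemma \ref{lem:eigenvalues drift to infinity} asserts $\|\Lambda_j\| \to +\infty$ as $j \to +\infty$ under the hypothesis $\mathbf{0}\notin \Phi(M)$, which forces $\|\Lambda_{j_\nu}\| \to +\infty$ as $\nu\to +\infty$. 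This is a contradiction, so every joint eigenvalue has finite multiplicity.

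There is no real obstacle here: the whole content has been placed in Lemma \ref{lem:eigenvalues drift to infinity}, whose proof in turn rests on the construction of the first-order Toeplitz operator $\|\mathfrak{T}\| = \bigl(\sum_k \mathfrak{T}_k^2\bigr)^{1/2}$ with principal symbol $r\,\|\Phi(m_x)\|$ strictly positive on $\Sigma$ (thanks to $\mathbf{0}\notin\Phi(M)$), so that its eigenvalues $\|\Lambda_j\|$ tend to infinity by the standard Boutet de Monvel--Guillemin spectral theory. Once that positivity of the principal symbol is in hand, the drift of $\Lambda_j$ to infinity and hence the finiteness of joint multiplicities are formal.
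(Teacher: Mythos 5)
Your proof is correct and follows exactly the route the paper intends: Corollary \ref{cor:joint eigenvalues finite multipliplity} is stated as an immediate consequence of Lemma \ref{lem:eigenvalues drift to infinity}, and the contradiction with $\|\Lambda_j\|\to\infty$ you spell out is precisely the implicit argument. Nothing is missing.
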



Thus there exists
$j_0$ such that $\Lambda_j\neq \mathbf{0}$ for $j\ge j_0$. We can strengthen the previous
statement as follows:

\begin{lem}
\label{lem:convergent_series}
If $a>0$ is sufficiently large, then
 $$
\sum_{j\ge j_0}\|\Lambda_j\|^{-a}<+\infty.
$$
\end{lem}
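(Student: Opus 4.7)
The plan is to deduce the claim from a Weyl-type polynomial upper bound on the counting function
$N(\lambda):=\#\{j:\|\Lambda_j\|\le\lambda\}$,
which in turn follows from the ellipticity of the first-order Toeplitz operator $\|\mathfrak{T}\|$ introduced in the proof of Lemma \ref{lem:eigenvalues drift to infinity}.

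First, I would observe that since $M$ is compact and $\mathbf{0}\notin\Phi(M)$, there exists $c>0$ such that $\|\Phi(m)\|\ge c$ for every $m\in M$. Combined with the formula $\sigma_{\|\mathfrak{T}\|}(x,r\alpha_x)=r\,\|\Phi(m_x)\|$ already computed in the proof of the preceding lemma, this shows that the principal symbol of $\|\mathfrak{T}\|$ is bounded below by $cr>0$ on $\Sigma$. Hence $\|\mathfrak{T}\|$ is a positive, self-adjoint, elliptic Toeplitz operator of order $1$, whose eigenvalues (counted with multiplicity) are precisely the $\|\Lambda_j\|$'s.

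Next I would invoke the Weyl law for positive elliptic Toeplitz operators from \cite{bdm-g}: the counting function $N(\lambda)$ has polynomial growth, that is, there exist $C>0$ and $p\in\mathbb{N}$ (one can in fact take $p=d=\dim_{\mathbb{C}}M$) such that $N(\lambda)\le C\,\lambda^p$ for every $\lambda\ge 1$. Granted this bound, a routine dyadic decomposition yields
\begin{equation*}
\sum_{j\ge j_0}\|\Lambda_j\|^{-a}
\;\le\; \sum_{k=0}^{\infty} N(2^{k+1})\,2^{-ak}
\;\le\; C\cdot 2^{p}\sum_{k=0}^{\infty} 2^{(p-a)k},
\end{equation*}
which is finite as soon as $a>p$.

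There is no real obstacle here: the statement is essentially a corollary of the ellipticity of $\|\mathfrak{T}\|$ established above, together with the fact that polynomial counting bounds for positive elliptic Toeplitz operators are part of the foundational Weyl-type estimates of Boutet de Monvel--Guillemin. The only minor bookkeeping point is to start the summation at $j_0$, so as to exclude the finitely many indices (if any) for which $\Lambda_j=\mathbf{0}$.
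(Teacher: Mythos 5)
Your proof is correct, but the route differs from the paper's. You establish ellipticity of $\|\mathfrak{T}\|$, invoke a Weyl-type polynomial bound $N(\lambda)\le C\lambda^p$ for the counting function of its eigenvalues (attributed to Boutet de Monvel--Guillemin), and conclude by a dyadic decomposition. The paper instead uses the structure theorem from \cite{bdm-g} to realize $\|\mathfrak{T}\|$ as the restriction to $H(X)$ of a genuine first-order self-adjoint pseudodifferential operator $Q$ commuting with $\Pi$; it then cites Theorem 12.2 of Grigis--Sj\"{o}strand for the convergence of $\sum_l \eta_l^{-a}$ where $(\eta_l)$ are the eigenvalues of $Q$, and observes that the $\|\Lambda_j\|$'s form a subsequence of the $\eta_l$'s. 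Your version is more self-contained once the Weyl law is granted and makes the numerology explicit; the paper's version delegates more to the cited literature but avoids having to state or use a counting bound. Both approaches rest on the same underlying input (eigenvalue growth for positive elliptic operators in the BdM--G calculus), so neither is more elementary in an essential sense.

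One small correction: the parenthetical ``one can in fact take $p=d=\dim_{\mathbb{C}}M$'' is off by one. For a first-order elliptic Toeplitz operator on $H(X)=\bigoplus_\ell H^0\left(M,\mathcal{A}^{\otimes\ell}\right)$, one has $\dim H^0\left(M,\mathcal{A}^{\otimes\ell}\right)\sim c\,\ell^d$, so the counting function grows like $\lambda^{d+1}$, i.e.\ $p=d+1$. This does not affect your argument, since any polynomial bound suffices, but the stated value is incorrect.
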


\begin{proof}[Proof of Lemma \ref{lem:convergent_series}]
Let $\|\mathfrak{T}\|$ be as in (\ref{eqn:toplitz norm operator}). 
We can assume that $\|\mathfrak{T}\|$ is the restriction to $H(X)$ of a first-order self-adjoint pseudodifferential
operator $Q$, with everywhere positive principal symbol, and commuting with $\Pi$ \cite{bdm-g}. If 
$\eta_1\le \eta_2\le\cdots$ is the sequence of the eigenvalues of $Q$, repeated according to multiplicity, 
we then have $\eta_l>0$ for $l\ge l_1$ for some appropriate $l_1\gg 0$, and
$$
\sum_{l\ge l_1}\eta_l^{-a}<+\infty
$$ 
for every $a\gg 0$ (Theorem 12.2 of \cite{gr_sj}). Since the $\|\Lambda_j\|$'s are the eigenvalues 
of the restriction of $Q$ to $H(X)$, they form a subsequence of the $\eta_l$'s, and the statement follows.

\end{proof}

\begin{cor}
\label{cor:temperate_distribution}
$\sum_j\delta_{-\Lambda_j}$ and 
$$\mathrm{tr}(\mathfrak{U})=\sum_j e^{i\langle\Lambda_j,\cdot\rangle}
=\mathcal{F}\left(\sum_j\delta_{-\Lambda_j}\right)$$
are temperate distribution on $\mathbb{R}^r$.
\end{cor}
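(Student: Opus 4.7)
The plan is to use Lemma \ref{lem:convergent_series} to bound the tail $\sum_{j\ge j_0}\|\Lambda_j\|^{-a}$ and then estimate the pairing of $\sum_j\delta_{-\Lambda_j}$ with a Schwartz function by the standard Schwartz semi-norms.

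First, fix $a>0$ large enough for Lemma \ref{lem:convergent_series} to apply, and note that by Corollary \ref{cor:joint eigenvalues finite multipliplity} each $\Lambda_j$ is attained by only finitely many $e_j$'s, so there is no ambiguity in the counting. For any $\psi\in\mathcal{S}(\mathbb{R}^r)$ and any integer $N\ge a$, the Schwartz estimate yields a continuous semi-norm $\|\psi\|_N=:\sup_{\mathbf{t}\in\mathbb{R}^r}(1+\|\mathbf{t}\|)^N\,|\psi(\mathbf{t})|$ such that $|\psi(-\Lambda_j)|\le (1+\|\Lambda_j\|)^{-N}\,\|\psi\|_N$. Combining this with Lemma \ref{lem:convergent_series}, the series
$$
\left\langle \sum_j\delta_{-\Lambda_j},\,\psi\right\rangle=\sum_j\psi(-\Lambda_j)
$$
converges absolutely, and is bounded by a constant (depending on $N$ and $a$) times $\|\psi\|_N$. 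This shows that $\sum_j\delta_{-\Lambda_j}$ extends to a continuous linear functional on $\mathcal{S}(\mathbb{R}^r)$, i.e.\ a temperate distribution.

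Next, the Fourier transform $\mathcal{F}:\mathcal{S}'(\mathbb{R}^r)\rightarrow\mathcal{S}'(\mathbb{R}^r)$ is a topological isomorphism, so its value on any temperate distribution is again temperate. Applying $\mathcal{F}$ to $\sum_j\delta_{-\Lambda_j}$ and using the identity $\mathcal{F}(\delta_{-\Lambda_j})=e^{i\langle\Lambda_j,\cdot\rangle}$ (with the Fourier convention used throughout the paper) gives
$$
\mathcal{F}\left(\sum_j\delta_{-\Lambda_j}\right)=\sum_j e^{i\langle\Lambda_j,\cdot\rangle}=\mathrm{tr}(\mathfrak{U}),
$$
where the exchange of Fourier transform and summation is justified by continuity of $\mathcal{F}$ on $\mathcal{S}'(\mathbb{R}^r)$ together with the convergence of the partial sums of $\sum_j\delta_{-\Lambda_j}$ in $\mathcal{S}'(\mathbb{R}^r)$ (again by the uniform Schwartz estimate above applied to tails). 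Hence $\mathrm{tr}(\mathfrak{U})$ is temperate as well.

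The only substantive point is the decay estimate provided by Lemma \ref{lem:convergent_series}, which is already available; everything else is a routine verification of the definition of a temperate distribution and continuity of $\mathcal{F}$ on $\mathcal{S}'$. There is no significant obstacle.
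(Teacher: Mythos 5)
Your proof is correct and follows exactly the route the paper intends: Lemma \ref{lem:convergent_series} gives the polynomial decay of $\sum_j\|\Lambda_j\|^{-a}$, the Schwartz semi-norm estimate then shows $\psi\mapsto\sum_j\psi(-\Lambda_j)$ is a continuous functional on $\mathcal{S}(\mathbb{R}^r)$, and continuity of $\mathcal{F}$ on $\mathcal{S}'(\mathbb{R}^r)$ transfers temperateness to $\mathrm{tr}(\mathfrak{U})$. The paper leaves this corollary without an explicit proof precisely because this routine verification is what is expected.
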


\subsubsection{An intrinsic vector field}
\label{sctn:intrinsic vector field}
For every $m\in M$ there is a unique $\Xi(m)\in \mathfrak{t}$ such that
$\Xi(m)\in \ker \Phi(m)^\perp$, $\|\Xi(m)\|=1$ (with respect to the standard Euclidean product), and 
$$\langle\Phi(m),\Xi(m)\rangle=\|\Phi(m)\|.$$ Equivalently, if $\eta(m)\in \mathfrak{t}$ corresponds to
$\Phi(m)\in \mathfrak{t}^\vee$ under the isomorphism $\mathfrak{t}\cong \mathfrak{t}^\vee$ induced by the
standard Euclidean product, then $$\Xi(m)=\eta(m)/\|\eta(m)\|=\eta(m)/\|\Phi(m)\|.$$

We thus obtain a $\mathcal{C}^\infty$ map $m\in M\mapsto \Xi(m)\in \mathfrak{t}$, taking value in the unit sphere,
and a vector field
$V\in \mathfrak{X}(M)$ on $M$, intrinsically associated to $\Phi$, given by
$$
V(m)=:\Xi(m)_M(m)\,\,\,\,\,\,\,\,\,\,\,(m\in M),
$$
in the notation of Definition \ref{defn:induced_vector_fields}.

Furthermore, given $\nu\in \mathfrak{t}$ and $m\in M$ we have a unique orthogonal decomposition

\begin{equation}
 \label{eqn:variable decomposition}
\nu=\nu'(m)+a(\nu,m)\,\Xi(m),
\end{equation}
where $\nu'(m)\in \ker \Phi(m)$, and $a(\nu,m)=\langle \nu,\Xi(m)\rangle_\mathfrak{t}$.

\subsubsection{Transversality }
\label{sct:transversality}
Given that $\Phi(m)\neq \mathbf{0}$ for every $m\in M$, we obtain a $\mathcal{C}^\infty$ map to the unit sphere:
$$
\Phi_\mathrm{u}=:\frac{1}{\|\Phi\|}\,\Phi:M\rightarrow S^{r-1}\subseteq \mathfrak{t}^\vee\cong \mathbb{R}^r.
$$

If $\beta\in S^{r-1}\subseteq \mathfrak{t}^\vee$ (the unit sphere), we have set $M_\beta=:\Phi^{-1}(\mathbb{R}_+\,\beta)$ e $X_\beta=:\pi^{-1}(M_\beta)$. 
Clearly, $M_\beta=\Phi_\mathrm{u}^{-1}(\beta)$.

\begin{lem}
\label{lem:transversality_equivalent}
Consider $\beta\in \mathfrak{t}^\vee$ of unit norm.
Under the previous assumptions, the following conditions are equivalent:
\begin{enumerate}
\item $\Phi$ is transverse to $\mathbb{R}_+\,\beta$;
\item $\phi^X$ is locally free on $X_\beta$, that is, the stabilizer subgroup in $\mathbb{R}^r$ of any $x\in X_\beta$ is discrete;
\item for any $x\in X_\beta$, $\mathrm{val}_x:\mathfrak{t}\rightarrow T_xX$, $\xi\mapsto \xi_X(x)$, is injective;
\item for any $m\in M_\beta$, the restriction of the evaluation, $\mathrm{val}_m:\ker\Phi(m)\rightarrow T_mM$,
is injective;
\item $\beta$ is a regular value of $\Phi_\mathrm{u}$.
\end{enumerate}

\end{lem}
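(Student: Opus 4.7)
The plan is to prove the five conditions equivalent by establishing $(1)\Leftrightarrow(5)$ separately, and then the chain $(1)\Leftrightarrow(4)\Leftrightarrow(3)\Leftrightarrow(2)$. The main conceptual input is the moment map identity, which links $d_m\Phi$ to the evaluation maps $\mathrm{val}_m$, after which the rest is essentially linear algebra and the splitting of the contact lift.

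For $(1)\Leftrightarrow(5)$, I would fix $m\in M_\beta$ so that $\Phi(m)=\|\Phi(m)\|\,\beta$, and differentiate $\Phi_{\mathrm u}=\Phi/\|\Phi\|$ to obtain
\[
d_m\Phi_{\mathrm u}=\|\Phi(m)\|^{-1}\,\bigl(\mathrm{id}_{\mathfrak{t}^\vee}-\beta\,\beta^{\mathrm t}\bigr)\circ d_m\Phi,
\]
where $\mathrm{id}-\beta\,\beta^{\mathrm t}$ is the orthogonal projection onto $T_\beta S^{r-1}=\beta^\perp$. Surjectivity of $d_m\Phi_{\mathrm u}$ onto $\beta^\perp$ is then plainly equivalent to $d_m\Phi(T_mM)+\mathbb{R}\,\beta=\mathfrak{t}^\vee$, i.e.\ to transversality of $\Phi$ to $\mathbb{R}_+\beta$ at every point of $M_\beta$. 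For $(1)\Leftrightarrow(4)$, I would use the moment map identity $\langle d_m\Phi(v),\xi\rangle=2\,\omega_m\bigl(\xi_M(m),v\bigr)$ together with the nondegeneracy of $\omega$ to identify the annihilator $d_m\Phi(T_mM)^\perp\subseteq\mathfrak{t}$ with $\ker(\mathrm{val}_m)$. Since $\Phi(m)\in\mathbb{R}_+\,\beta$ gives $\ker\Phi(m)=\beta^\perp$, taking orthogonal complements in $\mathfrak{t}$ translates the transversality condition $d_m\Phi(T_mM)+\mathbb{R}\,\beta=\mathfrak{t}^\vee$ into $\ker(\mathrm{val}_m)\cap\ker\Phi(m)=\{0\}$, which is precisely (4).

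For $(3)\Leftrightarrow(4)$, I would unpack the contact lift formula of Remark \ref{rem:contact lift vector}: for $x\in X_\beta$ and $\xi\in\mathfrak{t}$,
\[
\xi_X(x)=\xi_M^\sharp(x)-\Phi^\xi(m_x)\,\partial_\theta.
\]
Because the horizontal and vertical components vanish independently and $\upsilon\mapsto\upsilon^\sharp$ is injective, $\mathrm{val}_x(\xi)=0$ holds iff $\xi_M(m_x)=0$ and $\xi\in\ker\Phi(m_x)$, so injectivity of $\mathrm{val}_x$ on $\mathfrak{t}$ is equivalent to injectivity of $\mathrm{val}_{m_x}$ on $\ker\Phi(m_x)$. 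Finally $(2)\Leftrightarrow(3)$ is the standard fact that the Lie algebra of the stabilizer of $x$ under the smooth action $\phi^X$ is exactly $\ker(\mathrm{val}_x)$, hence the stabilizer is discrete iff this kernel is trivial; discreteness of every stabilizer on $X_\beta$ means local freeness on $X_\beta$. The principal obstacle, and really the only nontrivial algebraic step, is the orthogonal-complement argument in $(1)\Leftrightarrow(4)$; once it is accomplished, the remaining implications are bookkeeping already contained in the definitions.
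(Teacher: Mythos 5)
Your proof is correct. For the record, the paper does not supply an argument of its own here: it remarks that the equivalence of (1)--(4) follows from \S 2.2 of the cited reference \cite{pao_IJM} and that (5) is straightforward, so there is no in-paper proof to compare against. Your self-contained chain of equivalences is exactly the argument one would expect to find there: the differentiation of $\Phi_{\mathrm u}$ at a point of $M_\beta$ for $(1)\Leftrightarrow(5)$, the moment-map identity and annihilator/orthogonal-complement translation for $(1)\Leftrightarrow(4)$, the splitting of the contact lift into horizontal and vertical parts (so that $\ker(\mathrm{val}_x)=\ker(\mathrm{val}_{m_x})\cap\ker\Phi(m_x)$) for $(3)\Leftrightarrow(4)$, and the identification of the stabilizer Lie algebra with $\ker(\mathrm{val}_x)$ for $(2)\Leftrightarrow(3)$. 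All of these steps are sound and consistent with the paper's conventions, including the use of $2\omega$ as the symplectic form in the moment-map identity.
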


The first four points follow from the discussion in \S 2.2 of \cite{pao_IJM}, and the latter is straightforward.

\begin{cor}
 \label{cor:isolated_periods_transv}
Given that $\Phi$ is transverse to $\mathbb{R}_+\cdot \beta$, there is a $\phi^X$-invariant
tubular neighborhood $X'\subseteq X$ of $X_\beta$ on which $\phi^X$ is locally free. 
\end{cor}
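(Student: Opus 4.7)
The plan is to identify the desired $X'$ with (a tubular neighborhood of $X_\beta$ inside) the locus where the evaluation map $\mathrm{val}_x$ is injective, and then verify that this locus is open, contains $X_\beta$, and is $\phi^X$-invariant.

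First, I would introduce the set
\[
U\,=:\,\{x\in X\,:\,\mathrm{val}_x:\mathfrak{t}\rightarrow T_xX\ \text{is injective}\}.
\]
Since $\mathrm{val}_x$ depends smoothly on $x$ (its matrix in any local frame is built from the smooth vector fields $\widetilde{\upsilon}_1,\ldots,\widetilde{\upsilon}_r$), the condition that it has maximal rank $r$ is open. Thus $U$ is open in $X$. By the equivalence (1)$\Leftrightarrow$(3) of Lemma \ref{lem:transversality_equivalent}, the transversality assumption on $\Phi$ forces $X_\beta\subseteq U$. Furthermore, being locally free at $x$ is, by definition, the same as $\mathrm{val}_x$ being injective, so $\phi^X$ is locally free on $U$.

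Next, I would prove that $U$ is $\phi^X$-invariant, which is the only place where one really uses something. Since the $\mathbb{R}^r$-action is abelian, any two generators commute: $[\xi_X,\eta_X]=0$ for $\xi,\eta\in \mathfrak{t}$. Consequently, the flow $\phi^X_\mathbf{s}$ (generated by some $\eta_X$'s) preserves each induced vector field $\xi_X$, i.e.\ $(\phi^X_\mathbf{s})_*\xi_X=\xi_X$. Evaluating at $x$ gives
\[
\xi_X\bigl(\phi^X_\mathbf{s}(x)\bigr)=d_x\phi^X_\mathbf{s}\bigl(\xi_X(x)\bigr),
\]
and since $d_x\phi^X_\mathbf{s}$ is a linear isomorphism, $\mathrm{val}_{\phi^X_\mathbf{s}(x)}$ and $\mathrm{val}_x$ have the same kernel. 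In particular, $x\in U$ iff $\phi^X_\mathbf{s}(x)\in U$, so $U$ is invariant.

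Finally, since $X_\beta$ is a compact submanifold of $X$ (being the preimage of the compact embedded submanifold $M_\beta\subseteq M$ under the $S^1$-bundle projection $\pi$, which is well defined thanks to transversality), and since $X_\beta\subseteq U$ with $U$ open and $\phi^X$-invariant, one can take a standard tubular neighborhood $V$ of $X_\beta$ with $V\subseteq U$ and then replace $V$ by its flow-saturation $X'=:\bigcup_{\mathbf{s}\in\mathbb{R}^r}\phi^X_\mathbf{s}(V)\subseteq U$, which is open, $\phi^X$-invariant, still contains $X_\beta$, and still lies in $U$, hence carries a locally free action. This $X'$ has the desired properties.

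There is no real obstacle here; the one subtle step is the invariance of $U$, which rests on the abelianity of the action. Everything else is a matter of unwinding the equivalences already recorded in Lemma \ref{lem:transversality_equivalent} and invoking openness of the maximal-rank locus.
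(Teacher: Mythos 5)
Your proof is correct, and it follows the natural route the paper leaves implicit: combine the open-ness of the maximal-rank locus $U$ of $\mathrm{val}$, the inclusion $X_\beta\subseteq U$ coming from the equivalence (1)$\Leftrightarrow$(3) in Lemma~\ref{lem:transversality_equivalent}, and the invariance of $U$ under an abelian action. The identity $\xi_X\bigl(\phi^X_\mathbf{s}(x)\bigr)=d_x\phi^X_\mathbf{s}\bigl(\xi_X(x)\bigr)$, which you deduce from $[\xi_X,\eta_X]=0$, is exactly the right reason that being locally free is orbit-invariant.

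One minor streamlining: the flow-saturation $\bigcup_{\mathbf{s}}\phi^X_\mathbf{s}(V)$ need not be a tubular neighborhood in the disc-bundle sense, although it is an open invariant neighborhood contained in $U$, which is all that is actually used later in the paper. You can avoid this quibble entirely by noting that $\phi^X$ acts by Riemannian isometries on $X$ (it preserves $\alpha$ and the CR structure, hence the metric) and that $X_\beta$ is $\phi^X$-invariant (because each $f_j$ is constant along all the commuting Hamiltonian flows, so $\Phi$ is $\phi^M$-invariant). Consequently the metric $\epsilon$-neighborhood $V_\epsilon=\{x:\mathrm{dist}_X(x,X_\beta)<\epsilon\}$ is \emph{automatically} $\phi^X$-invariant, and by compactness of $X_\beta$ together with the openness of $U$ one has $V_\epsilon\subseteq U$ for $\epsilon$ small; taking $X'=V_\epsilon$ finishes the argument cleanly and makes the saturation step unnecessary.
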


\begin{cor}
 \label{cor:distance_comparison}
Given that $M$ is compact and that $\mathbf{0}\not\in \Phi(M)$,
if $\beta\in \mathfrak{t}^\vee$ has unit norm and $\Phi$ is transverse to $\mathbb{R}_+\cdot\beta$, 
then there exists a constant $C>0$ such that for all $m\in M$ we have
$$
\mathrm{dist}_{\mathfrak{t}^\vee}\big(\mathbb{R}_+\cdot\Phi(m),\beta\big)\ge C\, \mathrm{dist}_M(m,M_\beta).
$$
\end{cor}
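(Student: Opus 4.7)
The plan is to decouple the statement into two estimates: first, comparing the ray-to-point distance on $\mathfrak{t}^\vee$ with the simpler distance on the unit sphere, and second, using transversality plus compactness to bound the spherical distance from below in terms of $\mathrm{dist}_M(\cdot,M_\beta)$.

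First I would show that, up to a universal multiplicative constant,
$$
\mathrm{dist}_{\mathfrak{t}^\vee}\big(\mathbb{R}_+\cdot\Phi(m),\beta\big)\ \asymp\ \|\Phi_\mathrm{u}(m)-\beta\|,
$$
where $\Phi_\mathrm{u}=\Phi/\|\Phi\|:M\to S^{r-1}$ is the unit-length moment map introduced in \S\ref{sct:transversality}. Indeed, $\mathbb{R}_+\cdot\Phi(m)=\mathbb{R}_+\cdot\Phi_\mathrm{u}(m)$, so if $\theta\in[0,\pi]$ denotes the angle between $\Phi_\mathrm{u}(m)$ and $\beta$, a direct projection computation on the unit sphere gives $\|\Phi_\mathrm{u}(m)-\beta\|=2\sin(\theta/2)$, while the ray-to-point distance equals $\sin\theta$ when $\theta\le\pi/2$ and equals $1$ when $\theta\ge\pi/2$. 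The ratio of these two nonnegative quantities is bounded above and away from zero by explicit absolute constants. Thus it suffices to prove that, for some $C'>0$,
$$
\|\Phi_\mathrm{u}(m)-\beta\|\ \ge\ C'\,\mathrm{dist}_M(m,M_\beta)\qquad(m\in M).
$$

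For this I would split the argument according to whether $m$ lies in a fixed tubular neighborhood $U$ of $M_\beta$ or not. By Lemma \ref{lem:transversality_equivalent} item (5), $\beta$ is a regular value of $\Phi_\mathrm{u}$, so $d_m\Phi_\mathrm{u}:T_mM\to T_\beta S^{r-1}$ is surjective for every $m\in M_\beta$. Applying a submersion normal form (equivalently, the implicit function theorem in tubular coordinates adapted to the compact submanifold $M_\beta$), I can choose $U$ small enough that, in suitable charts, $\Phi_\mathrm{u}-\beta$ restricted to $U$ has rank equal to the codimension of $M_\beta$ in $M$, and the closest-point projection $m\mapsto m_0\in M_\beta$ is smooth. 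A first-order Taylor estimate then yields $\|\Phi_\mathrm{u}(m)-\beta\|\ge c_1\,\mathrm{dist}_M(m,M_\beta)$ on $U$ for some $c_1>0$.

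Outside $U$, I would appeal to compactness: the set $K:=M\setminus U$ is compact and disjoint from $\Phi_\mathrm{u}^{-1}(\beta)=M_\beta$, so $\mu:=\min_{m\in K}\|\Phi_\mathrm{u}(m)-\beta\|>0$, while $\mathrm{dist}_M(m,M_\beta)\le\mathrm{diam}(M)$, giving $\|\Phi_\mathrm{u}(m)-\beta\|\ge\mu\ge\bigl(\mu/\mathrm{diam}(M)\bigr)\mathrm{dist}_M(m,M_\beta)$ on $K$. Taking $C'=\min\bigl(c_1,\mu/\mathrm{diam}(M)\bigr)$ and combining with the first paragraph's comparison delivers the required constant $C>0$. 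The only real subtlety is the local lower bound on $U$; once the submersion normal form near the compact submanifold $M_\beta$ is set up cleanly, the global statement follows by routine compactness.
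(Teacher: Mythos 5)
Your proof is correct. The paper states this corollary without an explicit proof (it follows the transversality lemma and is evidently regarded as routine), so there is nothing to compare against line by line; but your argument follows exactly the route the paper sets up, namely passing to the unit-sphere map $\Phi_\mathrm{u}$ and using that $\beta$ is a regular value of $\Phi_\mathrm{u}$ (Lemma~\ref{lem:transversality_equivalent}(5)). The elementary comparison $\mathrm{dist}_{\mathfrak{t}^\vee}(\mathbb{R}_+\Phi(m),\beta)\asymp\|\Phi_\mathrm{u}(m)-\beta\|$ via $2\sin(\theta/2)$ vs.\ $\min(\sin\theta,1)$ is sound, and the split into a tubular neighborhood of the compact submanifold $M_\beta$ (where the first-order submersion estimate gives $\|\Phi_\mathrm{u}(m)-\beta\|\geq c_1\,\mathrm{dist}_M(m,M_\beta)$ uniformly) and its compact complement (where $\|\Phi_\mathrm{u}-\beta\|$ is bounded away from zero and $\mathrm{dist}_M(\cdot,M_\beta)\leq\mathrm{diam}(M)$) delivers the uniform constant. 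The only cosmetic quibble is the phrase ``$\Phi_\mathrm{u}-\beta$ restricted to $U$ has rank equal to the codimension'': what you mean, and what you need, is that $d\Phi_\mathrm{u}$ has constant rank $r-1$ on $U$, equivalently that $d_{m_0}\Phi_\mathrm{u}$ is injective on $N_{m_0}(M_\beta)$ uniformly in $m_0\in M_\beta$; this is guaranteed by compactness of $M_\beta$.
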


\begin{cor}
 \label{cor:distance_comparison_1}
Under the assumptions of Corollary \ref{cor:distance_comparison}, there exists a constant $C>0$ such that for all
$m\in M$ and $t>0$ we have
$$
\big\|t\,\Phi(m)-\beta\big\|\ge C\, \mathrm{dist}_M(m,M_\beta)
$$
where $\|\cdot\|$ is the standard Euclidean norm on $\mathfrak{t}^\vee\cong \mathbb{R}^r$.

\end{cor}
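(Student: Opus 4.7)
The plan is essentially to observe that Corollary \ref{cor:distance_comparison_1} is a direct consequence of Corollary \ref{cor:distance_comparison}, by unwinding the definition of the distance from a point to a subset.

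More concretely, I would fix $m\in M$ with $\Phi(m)\neq \mathbf{0}$ (this is guaranteed by the standing hypothesis) and $t>0$. Since the point $t\,\Phi(m)$ lies on the ray $\mathbb{R}_+\cdot\Phi(m)\subseteq \mathfrak{t}^\vee$, the norm $\|t\,\Phi(m)-\beta\|$ is one of the values taken by the function $s\mapsto \|s\,\Phi(m)-\beta\|$ on $(0,+\infty)$. Therefore
\[
\|t\,\Phi(m)-\beta\|\ \geq\ \inf_{s>0}\|s\,\Phi(m)-\beta\|\ =\ \mathrm{dist}_{\mathfrak{t}^\vee}\big(\mathbb{R}_+\cdot\Phi(m),\beta\big).
\]
The right-hand side is bounded below by $C\,\mathrm{dist}_M(m,M_\beta)$ by Corollary \ref{cor:distance_comparison}, and combining the two inequalities yields the claim with the same constant $C$.

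The case $m\in M_\beta$ is trivial, since then $\mathrm{dist}_M(m,M_\beta)=0$ and the stated inequality is automatic for every $t>0$. I do not anticipate any genuine obstacle here; all the geometric content has already been absorbed into Corollary \ref{cor:distance_comparison} (which in turn rests on the transversality of $\Phi$ to $\mathbb{R}_+\cdot\beta$, the compactness of $M$, and the assumption $\mathbf{0}\notin\Phi(M)$, via Lemma \ref{lem:transversality_equivalent}). The corollary at hand simply repackages that estimate in a form tailored to the stationary phase analyses that follow, where bounds of the type $\|t\,\Phi(m)-\beta\|\geq C\,\mathrm{dist}_M(m,M_\beta)$ are the natural way to control the phase along rays in $\mathfrak{t}^\vee$.
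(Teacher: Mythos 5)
Your proof is correct. The paper states Corollary \ref{cor:distance_comparison_1} without an explicit argument, and your derivation — observing that $t\,\Phi(m)$ is a point of the ray $\mathbb{R}_+\cdot\Phi(m)$, so $\|t\,\Phi(m)-\beta\|\ge \mathrm{dist}_{\mathfrak{t}^\vee}\big(\mathbb{R}_+\cdot\Phi(m),\beta\big)$ by the definition of distance to a set, and then invoking Corollary \ref{cor:distance_comparison} — is exactly the intended one-line reduction, with the same constant $C$ carried over.
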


\subsubsection{Transversality and locally isolated periods}

We are interested in the diagonal asymptotics of (\ref{eqn:kernel_smoothing}), and as we
shall see these are non trivial only in the vicinity of 
$X_\beta(\mathbf{s}_0)$.

In general, periods of $\phi^X$ needn't be isolated; nonetheless, under the previous transversality assumptions,
$\mathbf{s}_0$ is indeed an isolated period in a neighborhood
of $X_\beta(\mathbf{s}_0)$. Let us formalize this point by
giving first a definition.

\begin{defn}
 \label{defn:locally isolated periods}
Suppose $\mu:G\times P\rightarrow P$ is a $\mathcal{C}^\infty$ action of a Lie group
$G$ on a manifold $P$, and let $P'\subseteq P$ be a $G$-invariant open subset. For any $g\in G$, let
$P(g)\subseteq P$ be the fixed locus of $\mu_g:P\rightarrow P$. We shall say that
$g_0\in G$ is an isolated period of $\mu$ on $P'$ if  
$P(g_0)\cap P'\neq\emptyset$ and there exists an open neighborhood
$G'\subseteq G$ of $g_0$ such that
$P(g)\cap P'=\emptyset$ for all $g\in G'\setminus\{g_0\}$. 
If $p\in P(g_0)$, we shall say that $g_0$ is a locally isolated period at $p$ if it is an isolated period
on $P'$ for some open $\mu$-invariant neighborhood $P'$ of $p$.
\end{defn}

\begin{prop}
 \label{prop:isolated_periods}
Let $G$ be an Abelian Lie group, $(P,\varphi)$ a Riemannian manifold, and
$\mu:G\times P\rightarrow P$ a $\mathcal{C}^\infty$ action of $G$ on $P$ as
a group of Riemannian isometries. Suppose that $p_0\in P$ and that $\mu$ 
is locally free at $p_0$ (i.e., $\xi_P(p_0)\neq 0\in T_{p_0}P$, for all $\xi\in \mathfrak{g}$
with $\xi\neq \mathbf{0}$,
where $\mathfrak{g}$ is the Lie algebra of $G$). Consider $g_0\in G$ with 
$\mu_{g_0}(p_0)=p_0$; then $g_0$ is a locally isolated period of $\mu$ at $p_0$.
\end{prop}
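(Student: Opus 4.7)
The plan is to reduce the proposition, via a local slice construction and the commutativity of $G$, to a single implicit function theorem argument on a transverse slice at $p_0$.

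Set $V := \mathrm{val}_{p_0}(\mathfrak{g})\subseteq T_{p_0}P$ and $W := V^\perp\subseteq T_{p_0}P$. Local freeness at $p_0$ makes $\mathrm{val}_{p_0}:\mathfrak{g}\to V$ an isomorphism. The commutativity of $G$ produces the crucial identity
\[
d_{p_0}\mu_{g_0}\bigl(\eta_P(p_0)\bigr)
= \tfrac{d}{dt}\big|_{t=0}\mu_{g_0\exp(t\eta)}(p_0)
= \tfrac{d}{dt}\big|_{t=0}\mu_{\exp(t\eta)}\bigl(\mu_{g_0}(p_0)\bigr)
= \eta_P(p_0)
\]
for every $\eta\in\mathfrak{g}$, so $A:=d_{p_0}\mu_{g_0}$ acts as the identity on $V$. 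Since $A$ is orthogonal, it then also preserves $W$.

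I next construct a local slice. Put $\tilde S := \exp_{p_0}(B_r(0)\cap W)$ for small $r>0$, and choose a small open neighborhood $\tilde G\subseteq G$ of $e$. The map $\Psi: \tilde G\times\tilde S\to P$, $\Psi(h,s):=\mu_h(s)$, has differential at $(e,p_0)$ given by $(\eta,w)\mapsto \eta_P(p_0)+w$, which is an isomorphism $\mathfrak{g}\oplus W\to V\oplus W=T_{p_0}P$. After shrinking $\tilde G$ and $\tilde S$, $\Psi$ is a diffeomorphism onto an open $U\ni p_0$, and $P' := G\cdot U$ is the desired invariant open neighborhood of $p_0$. Commutativity reduces the fixed-point problem to the slice: if $g$ fixes $p=\mu_{g_1}(u)\in P'$ then $g$ fixes $u\in U$; writing $u=\mu_h(s)$ with $(h,s)\in\tilde G\times\tilde S$, the identity $\mu_g\mu_h=\mu_h\mu_g$ together with uniqueness of the $\Psi$-decomposition forces $g$ to fix $s\in\tilde S$.

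It thus suffices to show that for nonzero $\xi\in\mathfrak{g}$ close to $0$, $\mu_{g_0\exp(\xi)}$ has no fixed point in $\tilde S$. Work in Riemannian normal coordinates at $p_0$, identifying a neighborhood of $p_0$ with a ball in $T_{p_0}P=V\oplus W$ with $\tilde S\subseteq W$. Since $\mu_{g_0}$ is an isometry preserving geodesics through $p_0$, in these coordinates it coincides with the linear map $A$. Define
\[
F(\xi,s):=\mu_{g_0\exp(\xi)}(s),\qquad F_V(\xi,s):=\text{$V$-component of }F(\xi,s).
\]
Because $A$ preserves $W$, for every $s\in\tilde S\subseteq W$ we have $F(0,s)=As\in W$, hence $F_V(0,s)=0$. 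On the other hand,
\[
\partial_\xi F_V(0,0)\cdot\eta = A\,\eta_P(p_0) = \eta_P(p_0) = \mathrm{val}_{p_0}(\eta),
\]
an isomorphism $\mathfrak{g}\to V$. The implicit function theorem applied to $F_V$ at $(0,0)$ produces a unique smooth solution $\xi=\xi(s)$ of $F_V(\xi,s)=0$ near the origin; since $\xi\equiv 0$ is already a solution, uniqueness yields $\xi(s)\equiv 0$. A fixed-point equation $\mu_{g_0\exp(\xi)}(s)=s$ with $s\in W$ forces $F_V(\xi,s)=0$, so $\xi=0$. Shrinking $\tilde G$ and $\tilde S$ if necessary, we conclude that the only element of $g_0\tilde G$ with a fixed point in $P'$ is $g_0$ itself.

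The main (mild) obstacle is the bookkeeping of shrinkings: $\tilde G$, $\tilde S$, the normal coordinate chart, and the IFT neighborhood must be arranged to hold simultaneously. Once this is in place, the proof rests on three simple ingredients: the orthogonal splitting $T_{p_0}P=V\oplus W$, the commutativity identity $A|_V=\mathrm{id}_V$, and a single application of the implicit function theorem to the $V$-projection of the fixed-point map.
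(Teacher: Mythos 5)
Your proof is correct and takes a genuinely different technical route from the paper's. Both exploit the orthogonal splitting $T_{p_0}P = V \oplus W$ with $V = \mathrm{val}_{p_0}(\mathfrak{g})$ and the commutativity fact that $A := d_{p_0}\mu_{g_0}$ restricts to the identity on $V$ (hence, being orthogonal, preserves $W$). The paper's proof parameterizes a tubular neighborhood of the orbit piece $Q(\delta)=\{e^\xi\cdot p_0\}$ via the normal exponential map $\widetilde\gamma(\xi,\mathbf{n})=\exp_P\big(e^\xi\cdot p_0,\mathbf{n}\big)$ and uses that $\mu_{g_0 e^\eta}$ is an isometry carrying $Q$ to $Q$, so the fixed-point equation $g\cdot r=r$ translates, by injectivity of $\widetilde\gamma$, into $e^{\xi+\eta}\cdot p_0 = e^\xi\cdot p_0$ and hence $\eta=0$; commutativity enters through $g_0 e^\eta e^\xi\cdot p_0 = e^{\xi+\eta}\cdot p_0$. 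You instead anchor everything at the single point $p_0$: you erect a transverse slice $\tilde S\subseteq\exp_{p_0}(W)$, reduce the fixed-point problem to the slice via commutativity, and kill the group direction with one application of the implicit function theorem to $F_V(\xi,s)$, using $F_V(0,s)\equiv 0$ (because $A$ preserves $W$) and the invertibility of $\partial_\xi F_V(0,0)=\mathrm{val}_{p_0}$. In effect you trade the tubular-neighborhood construction and its equivariance under $\mu_{g_0e^\eta}$ for a single IFT application on a slice; both proofs then saturate under $G$ in the same way. One small nit: in passing from $g\cdot u=u$ to $g\cdot s=s$ you invoke ``uniqueness of the $\Psi$-decomposition,'' but $g\cdot s$ need not \emph{a priori} lie in $\tilde S$; the clean and sufficient justification is simply that $\mu_h:P\to P$ is injective, so $\mu_h(g\cdot s)=\mu_h(s)$ directly gives $g\cdot s=s$. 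This does not affect the validity of the argument.
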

 
The statement is quite straightforward when the action is proper. In fact, the hypothesis
implies that the stabilizer subgroup $\mathrm{St}(p_0)\subseteq G$ of $p_0$ is discrete,
whence there exists an open neighborhood $G'\subseteq G$ of $g_0$, such that
$G'\cap \mathrm{St}(p_0)=\{g_0\}$. On the other hand, since the action is proper
there exists an invariant open neighborhood $P'\subseteq P$ such that
$\mathrm{St}(p)\subseteq \mathrm{St}(p_0)$ for every $p'\in P'$ (see e.g. Appendix B of \cite{gkk}), 
and this implies the statement. The claim follows, therefore, whenever $\phi^X$ descends
to an action of the compact torus $\mathbb{T}^r=\mathbb{R}^r/\mathbb{Z}^r$. Since 
we do not wish to impose this condition, we give a general proof.

\begin{proof}[Proof of Proposition \ref{prop:isolated_periods}]
Let $d_P$ and $d_G$ denote the dimensions of $P$ and $G$, respectively.
Let us fix some Euclidean scalar product on $\mathfrak{g}$, and
let $B_{\mathfrak{g}}(\mathbf{0},\delta)\subseteq \mathfrak{g}$ 
be the ball centered at the origin and of radius $\delta>0$.
To abridge notation, let us set $g\cdot p=:\mu(g,p)$.

Then for sufficiently small $\delta$
the map $$\gamma:B_{\mathfrak{g}}(\mathbf{0},\delta)\longrightarrow P,\,\,\,\,\,\xi\mapsto e^\xi\cdot p_0$$ 
is a diffeomorphism
onto its image; here $\xi\mapsto e^\xi$ is of course the exponential map on $G$. 

Thus $Q(\delta)=:\gamma\big(B_{\mathfrak{g}}(\mathbf{0},\delta)\big)\subseteq P$ is a smooth $d_G$-dimensional submanifold
passing through $p_0$. Let $N\subseteq \left.TP\right|_{Q}$ be Riemannian normal bundle to $Q$
and let $N(\epsilon)\subseteq N$
be the $\epsilon$-neighborhood of the zero section for some $\epsilon>0$ sufficiently small.
Also, let $N(\delta,\epsilon)$ be the pull-back of $N(\epsilon)$ to $B_{\mathfrak{g}}(\mathbf{0},\delta)$.
Thus, perhaps after passing to smaller values of $\delta$ and $\epsilon$
if necessary, the normal
exponential map provides a smooth map
$$
\widetilde{\gamma}:N(\delta,\epsilon)\longrightarrow P,\,\,\,\,(\xi,\mathbf{n})\mapsto \exp_P\big(e^{\xi}\cdot p_0,\mathbf{n}\big),
$$
where $\exp_P$ is the exponential map of $P$, defined on some open neighborhood of the zero section
in $TP$.

Then, again perhaps after passing to smaller $\epsilon,\,\delta$ if necessary, $\widetilde{\gamma}$ is a diffeomorphism onto its image 
$R(\delta,\epsilon)$,
which is an open tubular neighborhood of $Q(\delta)$.

Now let $R'=R(\delta',\epsilon')$ be similarly constructed, but with suitably smaller $\delta',\,\epsilon'>0$.
Consider $r=\exp_P\big(e^{\xi}\cdot p_0,\mathbf{n}\big)\in R'$, and
$g=g_0\,e^\eta$ with $\eta\in  B_{\mathfrak{g}}(\mathbf{0},\delta')$, and suppose $r\in P(g)$.
Then, because $G$ is Abelian and acts as a group of Riemannian isometries we have
\begin{eqnarray*}
 g\cdot r&=&g_0\,e^\eta\cdot \exp_P\left(e^{\xi}\cdot p_0,\mathbf{n}\right)\\
&=&\exp_P\left(e^{\xi+\eta}\cdot p_0,\mathrm{d}_{e^{\xi}\cdot p_0}\mu_{g_0\,e^\eta}(\mathbf{n})\right)\\
&=&r=\exp_P\left(e^{\xi}\cdot p_0,\mathbf{n}\right).
\end{eqnarray*}
This forces however $e^{\xi+\eta}\cdot p_0=e^\xi\cdot p_0$, whence $\eta=0$ and so $g=g_0$.
Thus $g_0$ is an isolated period of $\mu$ on $R'$.

Now let $R''=:G\cdot R'$ be the $\mu$-saturation of $R'$. Since $G$ is Abelian, if $r''\in R''$ and $r''=g\cdot r'$ for some
$r'\in R'$, then $r'$ and $r''$ have the same stabilizer. Therefore, $R''$ is an invariant open neighborhood
of $p_0$, and $g_0$ is an isolated period of $\mu$ on $R''$. 

\end{proof}

\begin{cor} 
\label{cor:periodi isolated}
Under the assumptions of Corollary
\ref{cor:isolated_periods_transv}, there is a $\phi^X$-invariant neighborhood $X'$ of $X_\beta(\mathbf{s}_0)$
on which $\mathbf{s}_0$ is an isolated period.
\end{cor}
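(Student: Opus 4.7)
The plan is to combine Proposition~\ref{prop:isolated_periods} with a finite-cover argument applied to the compact set $X_\beta(\mathbf{s}_0)$. First I note that $X_\beta(\mathbf{s}_0)=X_\beta\cap X(\mathbf{s}_0)$ is compact: since $\mathbf{0}\notin\Phi(M)$, $M_\beta=\Phi_\mathrm{u}^{-1}(\beta)\subseteq M$ is closed, hence $X_\beta=\pi^{-1}(M_\beta)$ is closed in the compact manifold $X$, and intersecting with the closed fixed locus $X(\mathbf{s}_0)$ preserves compactness.

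To apply Proposition~\ref{prop:isolated_periods} one needs a $\phi^X$-invariant Riemannian metric on $X$. The natural metric for which $\pi:X\to M$ is a Riemannian submersion with horizontal distribution $\mathcal{H}$ and circle fibers of length $2\pi$ will do: its horizontal part is preserved because each $f_j$ being compatible forces $\phi^M$ to act on $M$ by Kähler isometries, and its vertical part is preserved automatically because $\phi^X$ commutes with the structure $S^1$-action. Corollary~\ref{cor:isolated_periods_transv} supplies a $\phi^X$-invariant open tubular neighborhood $X_0\supseteq X_\beta\supseteq X_\beta(\mathbf{s}_0)$ on which $\phi^X$ is locally free, so that all hypotheses of Proposition~\ref{prop:isolated_periods} are in place at every point of $X_\beta(\mathbf{s}_0)$.

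For each $x\in X_\beta(\mathbf{s}_0)$, Proposition~\ref{prop:isolated_periods} (with $P=X_0$, $G=\mathbb{R}^r$, $g_0=\mathbf{s}_0$) then yields an open $\phi^X$-invariant neighborhood $U_x\subseteq X_0$ of $x$ together with an open $V_x\subseteq\mathbb{R}^r$ containing $\mathbf{s}_0$ such that $X(\mathbf{s})\cap U_x=\emptyset$ for every $\mathbf{s}\in V_x\setminus\{\mathbf{s}_0\}$. By compactness of $X_\beta(\mathbf{s}_0)$ I extract a finite subcover $U_{x_1},\ldots,U_{x_N}$ and set
$$
X'=\bigcup_{i=1}^N U_{x_i},\qquad V=\bigcap_{i=1}^N V_{x_i}.
$$
Then $X'$ is a $\phi^X$-invariant open neighborhood of $X_\beta(\mathbf{s}_0)$, and for $\mathbf{s}\in V\setminus\{\mathbf{s}_0\}$ one has $X(\mathbf{s})\cap X'=\bigcup_i\bigl(X(\mathbf{s})\cap U_{x_i}\bigr)=\emptyset$. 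In the terminology of Definition~\ref{defn:locally isolated periods}, this means that $\mathbf{s}_0$ is an isolated period of $\phi^X$ on $X'$.

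The argument is essentially a packaging of results already established, so I anticipate no real obstacle: the only point requiring brief justification is the existence of the $\phi^X$-invariant Riemannian metric needed to feed Proposition~\ref{prop:isolated_periods}, and this is a direct consequence of the compatibility of the Hamiltonians $f_j$ with the Kähler structure.
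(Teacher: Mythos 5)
Your proof is correct and follows essentially the same route as the paper's: apply Proposition~\ref{prop:isolated_periods} pointwise along the compact set $X_\beta(\mathbf{s}_0)$ (which is admissible thanks to the locally free invariant tubular neighborhood from Corollary~\ref{cor:isolated_periods_transv}), then extract a finite subcover and take the union of the invariant neighborhoods and the intersection of the corresponding parameter balls. The only difference is that you explicitly justify the existence of a $\phi^X$-invariant Riemannian metric, a point the paper leaves implicit; that addition is correct and harmless.
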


\begin{proof}[Proof of Corollary \ref{cor:periodi isolated}]
By Corollary \ref{cor:isolated_periods_transv}
and Proposition \ref{prop:isolated_periods}, every $x\in  X_\beta(\mathbf{s}_0)$ has an invariant neighborhood
$X'_x$ on which $\mathbf{s}_0$ is an isolated period. By compactness of $X_\beta(\mathbf{s}_0)$, we may find finitely
many such neighborhoods, say $X_1',\ldots,X_k'$, whose union $X'$ contains $X_\beta(\mathbf{s}_0)$,
and such that $\mathbf{s}_0$ is the only period of $\phi^X$ on $X'_j$ contained in $B_{\mathbb{R}^r}(\mathbf{s}_0,\delta_j)$
for some $\delta_j>0$. Then $X'$
is invariant and $\mathbf{s}_0$ is the only period on $X'$ in $B_{\mathbb{R}^r}(\mathbf{s}_0,\delta)$,
where $\delta=\min (\delta_j)$. 
\end{proof}

\subsubsection{Transversality and fixed loci}
\label{sctn:transversality_fixed_loci}

Since $\phi^M_{\mathbf{s}}:M\rightarrow M$ is holomorphic and symplectic, 
$M(\mathbf{s})$ is a (compact) complex submanifold of $M$ (Definition \ref{defn:notation_fixed_periods}), 
and its tangent space at
any $m\in M(\mathbf{s})$ is 
\begin{equation}
 \label{eqn:tangent_space_fixed_locus}
T_mM(\mathbf{s})=\ker \left(d_m\phi^M_{\mathbf{s}}-\mathrm{id}_{T_mM}\right),
\end{equation}
a complex subspace of $T_mM$. 

In particular, since $\mathbb{R}^r$ is an Abelian Lie group we have for any $\xi\in \mathfrak{t}$
and $m\in M(\mathbf{s})$ that
\begin{equation}
\label{eqn:Abelian_inclusion}
 \xi_M(m),\,J_m\big(\xi_M(m)\big)\in \ker \left(d_m\phi^M_{\mathbf{s}}-\mathrm{id}_{T_mM}\right).
\end{equation}

On the other hand, if $\beta\in \mathfrak{t}^\vee$, $\beta\neq \mathbf{0}$
and $\Phi$ is transverse to 
$\mathbb{R}_+\cdot \beta$, then $M_\beta$ is a (real) compact submanifold of $M$, of codimension $r-1$;
for any $m\in M_\beta$, by the discussion in \cite{pao_IJM} the normal bundle $N_m(M_\beta)$ to $M_\beta$ at $m$ is given by
\begin{equation}
  \label{eqn:normal_bundle_ray}
N_m(M_\beta)=J_m\circ \mathrm{val}_m\big(\ker \Phi(m)\big)\subseteq T_mM.
\end{equation}

Since $X_\beta$ is a union of connected components of $\pi^{-1}(M_\beta)$, its normal space
$N_x(X_\beta)$ at any $x\in X_\beta$ is the horizontal lift of the normal space $N_{m_x}(M_\beta)$,
where $m_x=\pi(x)$. In view of Remark \ref{rem:contact lift vector}, this is
\begin{equation}
 \label{eqn:normal bundles MXbeta}
N_x(X_\beta)=N_{m_x}(M_\beta)^\sharp=\mathrm{val}_x\big(\ker\Phi(m_x)\big).
\end{equation}

\begin{lem}
\label{lem:transversality}
 Suppose as above that $\Phi:M\rightarrow \mathfrak{t}$ is transverse to $\mathbb{R}_+\cdot \beta$.
Then for any $\mathbf{s}\in \mathbb{R}^r$ the following holds:
\begin{enumerate}
 \item $M(\mathbf{s})$ and $M_\beta$ are transverse submanifolds of
$M$;
\item for any $m\in M_\beta(\mathbf{s})=:M(\mathbf{s})\cap M_\beta$,
the normal bundle to $M_\beta(\mathbf{s})$ at $m$ is the \textit{orthogonal} direct sum 
$$
N_m\big(M_\beta(\mathbf{s})\big)=\ker \left(d_m\phi^M_{\mathbf{s}}-\mathrm{id}_{T_mM}\right)^\perp\oplus^\perp 
\big[J_m\circ \mathrm{val}_m\big(\ker \Phi(m)\big)\big].
$$
\end{enumerate}

\end{lem}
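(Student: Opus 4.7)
The plan is to derive both statements from the single key inclusion
$$
N_m(M_\beta)\;=\;J_m\bigl(\mathrm{val}_m(\ker\Phi(m))\bigr)\;\subseteq\;T_mM(\mathbf{s}),
$$
valid at any $m\in M_\beta\cap M(\mathbf{s})$. To establish it, I would invoke the Abelian and holomorphic nature of the action: since $\mathbb{R}^r$ is commutative, $\phi^M_\mathbf{s}$ preserves each vector field $\xi_M$, so $d_m\phi^M_\mathbf{s}\bigl(\xi_M(m)\bigr)=\xi_M(m)$, and since the action is holomorphic, $d_m\phi^M_\mathbf{s}$ commutes with $J_m$, hence also fixes $J_m\bigl(\xi_M(m)\bigr)$. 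This is exactly (\ref{eqn:Abelian_inclusion}). Combining with (\ref{eqn:normal_bundle_ray}) then gives the asserted inclusion $N_m(M_\beta)\subseteq T_mM(\mathbf{s})$.

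Part (1) is then immediate: at any $m\in M_\beta\cap M(\mathbf{s})$,
$$
T_mM_\beta+T_mM(\mathbf{s})\;\supseteq\;T_mM_\beta+N_m(M_\beta)\;=\;T_mM,
$$
so $M(\mathbf{s})$ and $M_\beta$ meet transversally, and their intersection $M_\beta(\mathbf{s})$ is a submanifold with $T_mM_\beta(\mathbf{s})=T_mM_\beta\cap T_mM(\mathbf{s})$.

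For part (2), I would pass to orthogonal complements via the elementary identity $(V\cap W)^\perp=V^\perp+W^\perp$ in the inner product space $T_mM$, which yields
$$
N_m\bigl(M_\beta(\mathbf{s})\bigr)\;=\;N_m(M_\beta)+\ker\bigl(d_m\phi^M_\mathbf{s}-\mathrm{id}_{T_mM}\bigr)^\perp.
$$
It only remains to upgrade this to an \emph{orthogonal} direct sum. For this, note once more that $N_m(M_\beta)\subseteq T_mM(\mathbf{s})=\ker\bigl(d_m\phi^M_\mathbf{s}-\mathrm{id}_{T_mM}\bigr)$, so $N_m(M_\beta)$ is tautologically orthogonal to $\ker\bigl(d_m\phi^M_\mathbf{s}-\mathrm{id}_{T_mM}\bigr)^\perp$; in particular the two summands intersect trivially. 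This gives exactly the orthogonal decomposition claimed.

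The only nontrivial step is the key inclusion $N_m(M_\beta)\subseteq T_mM(\mathbf{s})$; once that is in hand, the rest is a routine application of the complementary-subspace identity in Euclidean space. The argument does not use the transversality of $\Phi$ to $\mathbb{R}_+\beta$ in any essential way beyond ensuring that $M_\beta$ is genuinely a submanifold with the normal space described by (\ref{eqn:normal_bundle_ray}).
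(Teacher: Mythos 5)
Your proof is correct and follows essentially the same route as the paper: both hinge on the inclusion $N_m(M_\beta)=J_m\bigl(\mathrm{val}_m(\ker\Phi(m))\bigr)\subseteq T_mM(\mathbf{s})$ coming from (\ref{eqn:Abelian_inclusion}), and both deduce transversality and the orthogonal splitting from it. The only cosmetic difference is that the paper phrases transversality dually as $N_mM(\mathbf{s})\cap N_mM_\beta=(0)$, whereas you show $T_mM_\beta+T_mM(\mathbf{s})=T_mM$ directly; these are equivalent formulations of the same fact.
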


\begin{proof}[Proof of Lemma \ref{lem:transversality}]
We need to show that $T_mM=T_mM(\mathbf{s})+ T_mM_\beta$ for any $m\in M(\mathbf{s})\cap M_\beta$, and this is equivalent
to $N_mM(\mathbf{s})\cap N_mM_\beta=(0)$. In view of (\ref{eqn:tangent_space_fixed_locus}), (\ref{eqn:Abelian_inclusion})
and (\ref{eqn:normal_bundle_ray}), this is
\begin{eqnarray*}
N_mM(\mathbf{s})\cap N_mM_\beta&=&\ker \left(d_m\phi^M_{\mathbf{s}}-\mathrm{id}_{T_mM}\right)^\perp\cap J_m\circ \mathrm{val}_m\big(\ker\Phi(m)\big) \\
&\subseteq& \ker \left(d_m\phi^M_{\mathbf{s}}-\mathrm{id}_{T_mM}\right)^\perp\cap \ker \left(d_m\phi^M_{\mathbf{s}}-\mathrm{id}_{T_mM}\right)
=(0).
\end{eqnarray*}

Therefore, $M(\mathbf{s},\beta)$ is a submanifold of $M$, and at any $m\in M(\mathbf{s},\beta)$ we have
$T_mM(\mathbf{s},\beta)=T_mM(\mathbf{s})\cap T_mM(\beta)$. Thus the normal bundle is
$N_mM(\mathbf{s},\beta)=N_mM(\mathbf{s})+ N_mM(\beta)$, and the inclusion above also 
shows that this is an orthogonal direct sum.

\end{proof}

\subsection{$\mathcal{U}$ and the singularities of its trace}

\subsubsection{$\mathfrak{U}$ as a complex FIO}

In the present compatible setting, 
the operator $\mathfrak{U}(\mathbf{s})$ in (\ref{eqn:unitary_operator_r}) 
has a simple expression in terms of $\phi^X_\mathbf{s}$ and
the Szeg\"{o} projector $\Pi$. Namely, let $(e_j)$ be a complete orthonormal system of $H(X)$, so that
$$
\Pi(x,y)=\sum_j e_j(x)\cdot \overline{e_j(y)}.
$$
Then the distributional kernel (\ref{eqn:unitary_operator_r_spectral})
of $\mathfrak{U}(\mathbf{s})=\left(\phi^X_{-\mathbf{s}}\right)^*\circ \Pi$ may also be expressed as
\begin{equation}
 \label{eqn:explicit_szego_U}
\mathfrak{U}(\mathbf{s},x,y)=\sum_je_j\left(\phi^X_{-\mathbf{s}}(x)\right)\cdot \overline{e_j(y)}
=\Pi\left(\phi^X_{-\mathbf{s}}(x),y\right).
\end{equation}
Therefore,
since the singular support of $\Pi$ is the diagonal \cite{f}, the singular support of $\mathfrak{U}(\mathbf{s})$
is the graph of $\phi^X_{-\mathbf{s}}$. 

In addition, by \cite{bdm_sj} near the diagonal we have a microlocal description of $\Pi$ as an
FIO with complex phase, of the form
\begin{equation}
 \label{eqn:szego_microlocal}
\Pi(x,y)\sim \int_0^{+\infty}e^{it\psi(x,y)}\,s(t,x,y)\,\mathrm{d}t,
\end{equation}
where $\Im \psi\ge0$, and $s(t,x,y)\sim\sum_{j\ge 0}t^{d-j}\,s_j(x,y)$ (see also the discussions in \cite{zel_tian}, \cite{sz}). 
Thus, near the graph of $\phi^X_{-\mathbf{s}}$
we have with $x_\mathbf{s}=:\phi^X_{-\mathbf{s}}(x)$
\begin{equation}
 \label{eqn:U_microlocal}
\mathfrak{U}(\mathbf{s},x,y)\sim \int_0^{+\infty}e^{it\psi(x_\mathbf{s},y)}\,s(t,x_\mathbf{s},y)\,\mathrm{d}t.
\end{equation}

\begin{rem}
 \label{rem:diff_psi} With $\psi$ as in (\ref{eqn:szego_microlocal}),
one has
$
\mathrm{d}_{(x,x)}\psi=(\alpha_x,-\alpha_x)
$ for any $x\in X$, and more generally for any $\vartheta\in \mathbb{R}$
$$
\mathrm{d}_{(e^{i\vartheta}\,x,x)}\psi=\left(e^{i\vartheta}\,\alpha_{e^{i\vartheta}\,x},\,-e^{i\vartheta}\,\alpha_{x}\right).
$$
\end{rem}

\begin{rem}
\label{rem:espansione per psi}
As shown in \S 3 of \cite{sz},
in a system of HLC centered at $x\in X$, the phase $t\,\psi$ satisfies the following expansion:
\begin{eqnarray*}
 \lefteqn{t\,\psi\big(x+(\theta,\mathbf{v}),x+\mathbf{w}\big)}\\
 &=&it\,\left[1-e^{i\theta}\right]-it\,\psi_2(\mathbf{v},\mathbf{w})\,e^{i\theta}
 +t\,R_3(\mathbf{v},\mathbf{w})\,e^{i\theta},
\end{eqnarray*}
where $\psi_2$ is as in Definition \ref{defn:psi2},
while $R_3:\mathbb{R}^{2d}\times \mathbb{R}^{2d}\rightarrow \mathbb{C}$ is $\mathcal{C}^\infty$ and vanishes to
third order at the origin.
\end{rem}

The description of $\Pi$ as an FIO in (\ref{eqn:szego_microlocal}), 
in view of Corollary 1.3 of \cite{bdm_sj} and Remark \ref{rem:diff_psi} above, 
implies, as is well-known, that the wave front of $\Pi$ is
$$
\mathrm{WF}(\Pi)=\big\{\big((x,x),\,r\,(\alpha_x,-\alpha_x)\big)\,:\,r>0,\,x\in X\big\}
\subseteq T^*(X\times X).
$$
It follows that the wave front of $\mathfrak{U}(\mathbf{s})$ is
\begin{equation}
 \label{eqn:U_wave_front}
 \mathrm{WF}\big(\mathfrak{U}(\mathbf{s})\big)
 =\big\{\big((x_\mathbf{s},x),\,r\,(\alpha_{x_\mathbf{s}},-\alpha_x)\big)\,:\,r>0,\,x\in X\big\}
\subseteq T^*(X\times X).
\end{equation}

We can view $\mathfrak{U}$ as an operator 
$\mathcal{C}^\infty\left(\mathbb{R}^r\times X\right)\rightarrow\mathcal{C}^\infty(X)$,
with distributional kernel 
$\mathfrak{U}\in \mathcal{D}(\mathbb{R}^r\times X\times X\times X)$; given (\ref{eqn:U_microlocal}) and Remark
\ref{rem:diff_psi}, 
its wave front is
\begin{eqnarray}
 \label{eqn:wave_front_U}
 \mathrm{WF}(\mathfrak{U})&=&\Big\{\Big((\mathbf{s},x,x_\mathbf{s}),\,r\,\big(\Phi(m_x),\,
 \alpha_x,\,-\alpha_{x_\mathbf{s}},)\Big)\,:\nonumber\\
&&\mathbf{s}\in \mathbb{R}^r,\, x\in X,\,r>0\Big\},
\end{eqnarray}
where $m_x=:\pi(x)$; we have used that $\alpha$ is $\phi^X$-invariant.

\subsubsection{Functorial description of $\mathrm{tr}(\mathfrak{U})$}
\label{sctn:functorial}

Since we have chosen a volume form on $X$, there are naturally
induced volume forms (whence densities and half-densities) on $\mathbb{R}^r\times X$ and
$\mathbb{R}^r\times X\times X$; in terms of the latter, we may extend the pull-back operation
of functions under $\mathcal{C}^\infty$ maps involving these manifolds to $\mathcal{C}^\infty$ 
densities. 
Similarly, the push-forward operation, which by duality is naturally defined on
densities under proper $\mathcal{C}^\infty$ maps, extends with the given choices to $\mathcal{C}^\infty$
functions. In addition, these functorial operations may be extended continuously 
to generalized densities, as far as the appropriate conditions involving wave fronts are met
(\cite{hor}, \cite{d}).
The identification between functions, densities and half-densities will be
left implicit in the following.

Let us then consider the diagonal map $\Delta:\mathbb{R}\times X\rightarrow \mathbb{R}\times X\times X$, 
$(\mathbf{s},x)\mapsto (\mathbf{s},x,x)$. In view of (\ref{eqn:wave_front_U})
and the condition $\Phi(m)\neq \mathbf{0}$ $\forall\,m\in M$, the 
pull-back 
$$\Delta^*(\mathfrak{U})=\sum_{j=1}^{+\infty}e^{i\langle\Lambda_j,\mathbf{s}\rangle}\,
 e_j(x)\cdot \overline{e_j(x)}\in \mathcal{D}'\left(\mathbb{R}^r\times X\right)$$
is well-defined; by (\ref{eqn:wave_front_U}) and
the functorial properties of wave fronts (see \cite{hor} and \cite{d}), it has wave front
\begin{eqnarray}
 \label{eqn:wave_front_U_Delta}
 \mathrm{WF}\left(\Delta^*(\mathfrak{U})\right)&=&\Big\{\Big((\mathbf{s},x),\,r\,\big(\Phi(m_x),0)\Big)\,:\nonumber\\
&&\mathbf{s}\in \mathbb{R}^r,\, x\in \mathrm{Fix}\left(\phi^X_\mathbf{s}\right),\,r>0\Big\}.
\end{eqnarray}
where $\mathrm{Fix}\left(\phi^X_\mathbf{s}\right)=\{x\in X:x=x_\mathbf{s}\}$.

Moreover, since the projection $p:\mathbb{R}^r\times X\rightarrow X$ is proper, the push-forward 
$p_*\left(\Delta^*(\mathfrak{U})\right)\in \mathcal{D}'\left(\mathbb{R}^r\right)$ is also
well-defined, and by orthonormality of the $e_j$'s we have
$$
p_*\left(\Delta^*(\mathfrak{U})\right)=\sum_{j=1}^{+\infty}e^{i\langle\Lambda_j,\mathbf{s}\rangle}
=\mathrm{tr}(\mathfrak{U}).
$$
In addition, given (\ref{eqn:wave_front_U_Delta}) its wave front is
\begin{eqnarray}
 \label{eqn:wave_front_U_Delta_pi}
 \mathrm{WF}\big(\mathrm{tr}(\mathfrak{U})\big)&=&\Big\{\Big(\mathbf{s},\,r\,\Phi(m_x)\Big)\,:\,
\mathbf{s}\in \mathbb{R}^r,x\in \mathrm{Fix}\left(\phi^X_\mathbf{s}\right),\,r>0\Big\}\nonumber\\
&=&\bigcup_{\mathbf{s}\in \mathbb{R}^r}\{\mathbf{s}\}\times \mathrm{WF}\big(\mathrm{tr}(\mathfrak{U})\big)_\mathbf{s},
\end{eqnarray}
where for each $\mathbf{s}\in \mathbb{R}^r$ we have set
$$
\mathrm{WF}\big(\mathrm{tr}(\mathfrak{U})\big)_\mathbf{s}=:
\bigcup_{x\in \mathrm{Fix}\left(\phi^X_\mathbf{s}\right)}\mathbb{R}_+\cdot \Phi(m_x)
=\bigcup_{x\in \mathrm{Fix}\left(\phi^X_\mathbf{s}\right)}\mathbb{R}_+\cdot \Phi_\mathrm{u}(m_x).
$$
This proves Proposition \ref{prop:wave_front} and Corollary \ref{cor:sing_supp}.

We are interested in estimating the asymptotics of (\ref{eqn:fourier_transform}). In view of the above we have:

\begin{cor}
Under the previous assumptions, suppose $$(\mathbf{s}_0,\beta_0)\in T^*\left(\mathbb{R}^r\right)\setminus 
\mathrm{WF}\big(\mathrm{tr}(\mathfrak{U})\big),\,\,\,\,\,\,\,\,\|\beta_0\|=1.$$ 
Then there exists $\epsilon>0$ such that for every 
$\chi\in \mathcal{C}^\infty_0\big(B_r(\mathbf{0},\epsilon)\big)$ we have
$$
\left\langle \mathrm{tr}(\mathfrak{U}), \chi_{\mathbf{s}_0}\,e^{-i\lambda\,\langle \beta,\cdot\rangle}\right\rangle
=O\left(\lambda^{-\infty}\right)
$$
uniformly in $\beta\in \mathbb{R}^r$ with $\|\beta\|=1$, $\|\beta-\beta_0\|<\epsilon$.

\end{cor}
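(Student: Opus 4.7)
The plan is to deduce the Corollary from the standard definition of the wave front set, using the functorial description of $\mathrm{tr}(\mathfrak{U})$ as a temperate distribution (Corollary \ref{cor:temperate_distribution}) together with the fact that uniformity of the rapid decay on a closed cone around $\beta_0$ follows from a routine splitting of the Fourier convolution.

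More precisely, the hypothesis $(\mathbf{s}_0,\beta_0)\notin \mathrm{WF}(\mathrm{tr}(\mathfrak{U}))$ produces, by definition of the wave front set, a cut-off $\widetilde\chi\in \mathcal{C}^\infty_0(\mathbb{R}^r)$ with $\widetilde\chi\equiv 1$ on some open neighbourhood $U$ of $\mathbf{s}_0$, together with an open conic neighbourhood $V\subseteq \mathbb{R}^r\setminus\{0\}$ of $\beta_0$, and constants $C_N>0$, such that
$$
\bigl|\mathcal{F}\bigl(\widetilde\chi\cdot \mathrm{tr}(\mathfrak{U})\bigr)(\eta)\bigr|\le C_N\,(1+\|\eta\|)^{-N}\qquad \forall\,\eta\in V,\,\forall\,N.
$$
I would then fix $\epsilon>0$ small enough so that (a) $B_r(\mathbf{s}_0,\epsilon)\subset U$, which forces $\chi_{\mathbf{s}_0}=\chi_{\mathbf{s}_0}\,\widetilde\chi$ for every $\chi\in \mathcal{C}^\infty_0(B_r(\mathbf{0},\epsilon))$, and (b) the compact set $K_\epsilon=:\{\beta\in \mathbb{R}^r:\|\beta\|=1,\ \|\beta-\beta_0\|<\epsilon\}$ has closure contained in a strictly smaller open cone $V'\Subset V$. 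By compactness of $\overline{K_\epsilon}$, this gives a uniform angular gap $c_0>0$ such that $\|\xi-\eta\|\ge c_0\,(\|\xi\|+\|\eta\|)$ whenever $\xi\in \mathbb{R}_+\cdot K_\epsilon$ and $\eta\notin V$.

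With these preparations in place, I would write the localized Fourier transform as a convolution:
$$
\mathcal{F}\bigl(\chi_{\mathbf{s}_0}\cdot \mathrm{tr}(\mathfrak{U})\bigr)(\lambda\beta)=\mathcal{F}\bigl(\chi_{\mathbf{s}_0}\,\widetilde\chi\cdot \mathrm{tr}(\mathfrak{U})\bigr)(\lambda\beta)=(2\pi)^{-r}\int_{\mathbb{R}^r}\widehat{\chi_{\mathbf{s}_0}}(\lambda\beta-\eta)\,\mathcal{F}\bigl(\widetilde\chi\cdot \mathrm{tr}(\mathfrak{U})\bigr)(\eta)\,\mathrm{d}\eta,
$$
and split the $\eta$-integral as $\int_V+\int_{V^c}$. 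On the piece $\eta\in V$ the factor $\mathcal{F}(\widetilde\chi\cdot\mathrm{tr}(\mathfrak{U}))(\eta)$ is rapidly decreasing, while $\widehat{\chi_{\mathbf{s}_0}}$ is Schwartz, so after distributing the decay via the trivial inequality $(1+\|\eta\|)\ge c\,(1+\lambda)$ on a neighbourhood of $\lambda\beta$ and $\widehat{\chi_{\mathbf{s}_0}}$ decay elsewhere, one gains $O(\lambda^{-N})$ for every $N$. On the piece $\eta\in V^c$, $\mathcal{F}(\widetilde\chi\cdot \mathrm{tr}(\mathfrak{U}))(\eta)$ only has polynomial growth (Corollary \ref{cor:temperate_distribution}), but the geometric gap $\|\lambda\beta-\eta\|\ge c_0(\lambda+\|\eta\|)$ combined with the Schwartz decay of $\widehat{\chi_{\mathbf{s}_0}}$ again gives $O(\lambda^{-N})$ after performing the $\eta$-integration, with the constants depending only on $\chi$, $\widetilde\chi$, $c_0$ and the polynomial order of $\mathrm{tr}(\mathfrak{U})$.

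There is no genuine obstacle here; this is the standard way one passes from the microlocal non-singularity encoded in $\mathrm{WF}$ to uniform rapid decrease on a closed subcone. The only point requiring a small amount of care is ensuring uniformity in $\beta\in K_\epsilon$, and that is precisely what the uniform angular separation $c_0$ provides, so that all the constants in the two estimates above are independent of $\beta$.
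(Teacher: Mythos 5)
Your proof is correct and spells out the standard microlocal argument that the paper leaves implicit: the corollary is stated as a direct consequence of the wave-front computation for $\mathrm{tr}(\mathfrak{U})$ together with the definition of the wave front set, without an explicit proof. The steps you supply --- producing a cut-off $\widetilde\chi$ identically one near $\mathbf{s}_0$ whose product with $\mathrm{tr}(\mathfrak{U})$ has rapidly decaying Fourier transform on a conic neighbourhood $V$ of $\beta_0$, writing the localized Fourier transform as a convolution against $\widehat{\chi_{\mathbf{s}_0}}$, and splitting the $\eta$-integral over $V$ and $V^c$ using the uniform angular gap furnished by $\overline{\mathbb{R}_+\,K_\epsilon}\Subset V$ --- are exactly the expected ones, and the estimates are sound.
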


Here $B_r(\mathbf{0},\epsilon)\subseteq \mathbb{R}^r$ is the open ball of center the origin and radius $\epsilon$,
while $\chi_{\mathbf{s}_0}(\mathbf{s})=\chi(\mathbf{s}-\mathbf{s}_0)$.

\subsubsection{The smoothing operator}

As in the case $r=1$, the operators $\mathfrak{U}(\mathbf{s})$ may be averaged with a
weight of rapid decrease to obtain
a smoothing operator. 

\begin{lem}
\label{lem:smoothing_operator_kernel}
For any $\chi\in \mathcal{S}\left(\mathbb{R}^r\right)$, the operator
$$
S_\chi=:\int_{\mathbb{R}^r}\chi(\mathbf{s})\,\mathfrak{U}(\mathbf{s})\,\mathrm{d}\mathbf{s}
$$
is smoothing, and its kernel $S_\chi(\cdot,\cdot)\in \mathcal{C}^\infty(X\times X)$ is given by
\begin{equation}
 \label{eqn:explicit_kernel}
 S_\chi(x,y)=\sum_j\widehat{\chi}(-\Lambda_j)\,e_j(x)\cdot \overline{e_j(y)}.
\end{equation}

\end{lem}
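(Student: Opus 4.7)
The plan is to derive the kernel formula by formal manipulation of the spectral representation, and then to verify convergence of the resulting series in $C^\infty(X\times X)$; smoothing-ness of $S_\chi$ follows from smoothness of its kernel.

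First I would start with (\ref{eqn:unitary_operator_r_spectral}), which expresses $\mathfrak{U}(\mathbf{s}, x, y) = \sum_j e^{i\langle \Lambda_j, \mathbf{s}\rangle}\, e_j(x)\cdot \overline{e_j(y)}$, and substitute it into the definition of $S_\chi$. Interchanging the sum and the integral (to be justified below) yields
$$
S_\chi(x,y) = \sum_j \left(\int_{\mathbb{R}^r}\chi(\mathbf{s})\, e^{i\langle \Lambda_j, \mathbf{s}\rangle}\,\mathrm{d}\mathbf{s}\right) e_j(x)\cdot \overline{e_j(y)} = \sum_j \widehat{\chi}(-\Lambda_j)\, e_j(x)\cdot \overline{e_j(y)},
$$
which is the claimed formula (\ref{eqn:explicit_kernel}).

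The substantive step is to justify that the series on the right converges in $C^\infty(X\times X)$, since this simultaneously legitimizes the interchange and shows that the kernel is smooth, hence $S_\chi$ is a smoothing operator. The two key ingredients are: (i) since $\chi\in \mathcal{S}(\mathbb{R}^r)$, its Fourier transform $\widehat{\chi}$ is Schwartz, so $|\widehat{\chi}(-\Lambda_j)|\le C_N\,(1+\|\Lambda_j\|)^{-N}$ for every $N\ge 0$; (ii) the $e_j$ are joint eigenfunctions of the commuting Toeplitz operators $\mathfrak{T}_k$, and in particular they are eigenfunctions of the first-order self-adjoint Toeplitz operator $\|\mathfrak{T}\|$ introduced in (\ref{eqn:toplitz norm operator}), with eigenvalues $\|\Lambda_j\|\to+\infty$. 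Realizing $\|\mathfrak{T}\|$ as the restriction to $H(X)$ of a first-order pseudodifferential operator $Q$ on $X$ with everywhere positive principal symbol (as in the proof of Lemma \ref{lem:convergent_series}) and applying standard elliptic regularity arguments together with Sobolev embedding, one obtains, for every $k\ge 0$, a bound of the form $\|e_j\|_{C^k(X)}\le C_k\,(1+\|\Lambda_j\|)^{N_k}$ for some $N_k>0$. Combining (i) with such polynomial bounds on the derivatives of $e_j(x)\cdot\overline{e_j(y)}$, we obtain uniform and absolute convergence of the termwise differentiated series, which proves convergence in $C^\infty(X\times X)$ and thereby also retroactively justifies the Fubini step.

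I expect the only non-routine point to be the polynomial control of the $C^k$-norms of the joint eigenfunctions, but this is entirely standard given that $\|\mathfrak{T}\|$ is the restriction of an elliptic first-order pseudodifferential operator and that the $e_j$ are, in particular, Szegő kernel projections of $Q$-eigenfunctions; one may invoke Theorem 12.2 of \cite{gr_sj} (already used in Lemma \ref{lem:convergent_series}) together with the Sobolev embedding $H^s(X)\hookrightarrow C^k(X)$ for $s>k+\dim X/2$. Once these estimates are in hand, the rest of the argument is a direct computation.
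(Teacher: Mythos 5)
Your proposal is correct and follows essentially the same route as the paper: realize $\|\mathfrak{T}\|$ as the restriction of an elliptic first-order pseudodifferential operator $Q$ (reusing the construction from the proof of Lemma~\ref{lem:convergent_series}), obtain polynomial $\mathcal{C}^k$-bounds $\|e_j\|_{\mathcal{C}^k}\le C_k\,\|\Lambda_j\|^{k+2d+1}$ via Sobolev embedding, combine with the Schwartz decay of $\widehat{\chi}$ and the summability from Lemma~\ref{lem:convergent_series} to get $\mathcal{C}^\infty(X\times X)$-convergence of (\ref{eqn:explicit_kernel}). One small stylistic difference: you justify the interchange of sum and integral by the uniform convergence of the resulting series, whereas the paper makes the final identification of (\ref{eqn:explicit_kernel}) with the Schwartz kernel of $S_\chi$ explicit via a density argument — first testing against finite linear combinations of the $e_j$, then extending by density; since the spectral expansion of $\mathfrak{U}(\mathbf{s},\cdot,\cdot)$ is only a distributional series, it is cleaner to phrase the last step in the paper's operator-theoretic way rather than as a Fubini interchange, but the content is the same.
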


The following is an adaptation of an argument in \S 12 of \cite{gr_sj}.

\begin{proof}[Proof of Lemma \ref{lem:smoothing_operator_kernel}]
 Let $Q$ be as in the proof of Lemma \ref{lem:convergent_series}.
 Since the $e_j$'s are orthonormal eigenfunctions of $Q$, with eigenvalues
 $\|\Lambda_j\|$, a standard argument based on the Sobolev inequalities
shows that for some fixed $j_0$ and all $j\ge j_0$ we have
$$
\|e_j\|_{\mathcal{C}^k}\le 
C_k\,\|\Lambda_j\|^{k+2d+1}.
$$
Since $\widehat{\chi}\in \mathcal{S}\left(\mathbb{R}^r\right)$, for any $N>0$ there
exists $C_N>0$ such that for all $j\ge j_0$ we have
$$\left|\widehat{\chi}(-\Lambda_j)\right|\le C_N'\,\|\Lambda_j\|^{-N}.$$
Thus Lemma \ref{lem:convergent_series} implies that (\ref{eqn:explicit_kernel}) converges 
in $\mathcal{C}^\infty(X\times X)$. Given this, that  (\ref{eqn:explicit_kernel}) is indeed the distributional kernel
of $S_\chi$ follows by first applying it to finite linear combinations of the $e_j$'s, and then
using a density argument.

\end{proof}

\section{Proof of Theorem \ref{thm:main rapid decrease}}

\subsection{Concentration near $X_\beta(\mathbf{s}_0)$}

\subsubsection{Concentration near $X(\mathbf{s}_0)$}
We shall first prove that $\mathcal{S}_\chi(\lambda\,\beta,\mathbf{s}_0,y,y)=O\left(\lambda^{-\infty}\right)$, unless
$y$ belongs to a small tubular neighborhood of $X(\mathbf{s}_0)$; 
this will allow us to represent $\Pi$ as an FIO with complex phase, 
without changing the asymptotics.

We have by (\ref{eqn:smoothing_operator}) and (\ref{eqn:explicit_szego_U}):
\begin{eqnarray}
 \label{eqn:fourier_transform_1}
\mathcal{S}_\chi(\lambda\,\beta,\mathbf{s}_0,y,y)
&=&\int_{\mathbb{R}^r}\,
\chi_{\mathbf{s}_0}(\mathbf{s})\,e^{-i\lambda\,\langle\beta,\mathbf{s}\rangle}\,
\Pi\left(y_\mathbf{s},y\right)\,\mathrm{d}\mathbf{s}.
\end{eqnarray}
where $y_\mathbf{s}=\phi^X_{-\mathbf{s}}(y)$.

On the support of $\chi_{\mathbf{s}_0}$, $\|\mathbf{s}-\mathbf{s}_0\|<\epsilon$.
Hence for some $C_1>0$ we have uniformly in $y\in X$:
\begin{equation}
 \label{eqn:distance fixed point locus}
 \mathrm{dist}\left(y_{\mathbf{s}_0},y_{\mathbf{s}}\right)\le
C_1\,\epsilon .
\end{equation}

On the other hand, there exist constants $C_3\ge C_2>0$ such that for every $\epsilon>0$
one has
\begin{equation}
 \label{eqn:distance fixed point image}
 C_3\,\mathrm{dist}_X\big(y,X(\mathbf{s}_0)\big)\ge 
\mathrm{dist}\left(y_{\mathbf{s}_0},y\right)\ge C_2\,\mathrm{dist}_X\big(y,X(\mathbf{s}_0)\big).
\end{equation}

Indeed, since $\phi^X$ is an action by isometries, at any $x\in X(\mathbf{s}_0)$ we have
$$
T_x\big(X(\mathbf{s}_0)\big)=\ker\left(\mathrm{d}_x\phi^X_{\mathbf{s}_0}-id_{T_xX}\right);
$$
hence, there exist constants $C_3'\ge C_2'>0$ such that
$$
C_3'\,\|\mathbf{n}\|\ge
\left\|\mathrm{d}_x\phi^X_{\mathbf{s}_0}(\mathbf{n})-\mathbf{n}\right\|\ge
C_2'\,\|\mathbf{n}\|,
$$
whenever $\mathbf{n}\in T_x\big(X(\mathbf{s}_0)\big)^\perp\subseteq T_xX$.
Then (\ref{eqn:distance fixed point image}) follows by writing, in a tubular neighborhood of
$X(\mathbf{s}_0)$, $y=x+\mathbf{n}$ in a smoothly varying HLC system
centered at $x$, and letting, say, $C_3=2\,C_2'$, $C_2=C_2'/2$.

Let then be $Z_1\subseteq X$ be the locus where 
$\mathrm{dist}_X\big(y,X(\mathbf{s}_0)\big)\ge 2\,(C_1/C_2)\,\epsilon$. If $y\in Z_1$, then
\begin{eqnarray}
 \label{eqn:distance_comparison_0}
\mathrm{dist}\left(y_{\mathbf{s}},y\right)&\ge& \mathrm{dist}\left(y_{\mathbf{s}_0},y\right)-
\mathrm{dist}\left(y_{\mathbf{s}_0},y_{\mathbf{s}}\right)\nonumber\\
&\ge&2\,C_1\,\epsilon-C_1\epsilon\ge C_1\,\epsilon.
\end{eqnarray}

As the singular support of $\Pi(\cdot,\cdot)\in \mathcal{D}'(X\times X)$ is the diagonal
in $X\times X$, it follows from (\ref{eqn:distance_comparison_0})
that the function 
$$
\gamma:(\mathbf{s},y)\in \mathbb{R}^r\times Z_1\mapsto 
\chi_{\mathbf{s}_0}(\mathbf{s})\,
\Pi\left(y_{\mathbf{s}},y\right)
$$
is well-defined and $\mathcal{C}^\infty$, and therefore its Fourier transform in $\mathbf{s}$,
$$
\widehat{\gamma}_y(\beta')=:
\int_{\mathbb{R}^r}\,
\chi_{\mathbf{s}_0}(\mathbf{s})\,e^{-i\langle\beta',\mathbf{s}\rangle}\,
\Pi\left(\phi^X_{-\mathbf{s}}(y),y\right)\,\mathrm{d}\mathbf{s}
$$
decreases rapidly for $\beta'\rightarrow\infty$, uniformly in $y\in Z_1$.

Setting $\beta'=\lambda\,\beta$, with $\beta$ of unit norm, we have proved:

\begin{lem}
 \label{lem:rapid_decrease}
 $\mathcal{S}_\chi(\lambda\,\beta,\mathbf{s}_0,y,y)=O\left(\lambda^{-\infty}\right)$ uniformly in
 $y\in Z_1$.
\end{lem}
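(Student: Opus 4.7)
The plan is to reduce the claim to the smoothness of the integrand in the formula
\begin{equation*}
\mathcal{S}_\chi(\lambda\,\beta,\mathbf{s}_0,y,y)=\int_{\mathbb{R}^r}\chi_{\mathbf{s}_0}(\mathbf{s})\,e^{-i\lambda\,\langle\beta,\mathbf{s}\rangle}\,\Pi(y_{\mathbf{s}},y)\,\mathrm{d}\mathbf{s},
\end{equation*}
and then to obtain the rapid decay by standard non-stationary Fourier analysis in $\mathbf{s}$. Since $\Pi$ is $\mathcal{C}^\infty$ off the diagonal of $X\times X$, it is enough to show that for $y\in Z_1$ and $\mathbf{s}\in\operatorname{supp}(\chi_{\mathbf{s}_0})$ the pair $(y_{\mathbf{s}},y)$ stays uniformly away from the diagonal; then $\mathbf{s}\mapsto \chi_{\mathbf{s}_0}(\mathbf{s})\,\Pi(y_{\mathbf{s}},y)$ is a smooth, compactly supported function of $\mathbf{s}$ varying smoothly (hence uniformly, by compactness of $Z_1$) with $y$, and integration by parts against $e^{-i\lambda\langle\beta,\mathbf{s}\rangle}$ yields $O(\lambda^{-\infty})$.

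To produce the uniform distance gap, I would combine two estimates. First, since $\phi^X$ is a smooth isometric $\mathbb{R}^r$-action, there is a Lipschitz constant $C_1>0$ with $\mathrm{dist}(y_{\mathbf{s}_0},y_{\mathbf{s}})\le C_1\epsilon$ on $\operatorname{supp}(\chi_{\mathbf{s}_0})$. Second, I need the lower bound
\begin{equation*}
\mathrm{dist}(y_{\mathbf{s}_0},y)\ge C_2\,\mathrm{dist}_X\bigl(y,X(\mathbf{s}_0)\bigr),
\end{equation*}
which I would prove by a tubular-neighborhood argument around $X(\mathbf{s}_0)$: at each $x\in X(\mathbf{s}_0)$, the differential $\mathrm{d}_x\phi^X_{\mathbf{s}_0}$ preserves the orthogonal splitting $T_xX=T_xX(\mathbf{s}_0)\oplus T_xX(\mathbf{s}_0)^\perp$ and, on the normal summand, has no eigenvalue equal to $1$, so $\|\mathrm{d}_x\phi^X_{\mathbf{s}_0}(\mathbf{n})-\mathbf{n}\|$ is comparable to $\|\mathbf{n}\|$. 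Using a smoothly varying HLCS around $X(\mathbf{s}_0)$ to write $y=x+\mathbf{n}$ linearizes the displacement and gives the stated comparison. Then the triangle inequality yields
\begin{equation*}
\mathrm{dist}(y_{\mathbf{s}},y)\ge C_2\,\mathrm{dist}_X\bigl(y,X(\mathbf{s}_0)\bigr)-C_1\epsilon\ge C_1\epsilon
\end{equation*}
on $Z_1=\{\,\mathrm{dist}_X(y,X(\mathbf{s}_0))\ge 2(C_1/C_2)\epsilon\,\}$, as desired.

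The main (mild) obstacle is the normal non-degeneracy estimate in the second ingredient: one has to verify that $\mathrm{d}_x\phi^X_{\mathbf{s}_0}-\mathrm{id}$ is uniformly invertible on the normal bundle to $X(\mathbf{s}_0)$. This is essentially the definition of the fixed locus (its tangent space is the $1$-eigenspace), combined with compactness of $X(\mathbf{s}_0)$ to make the bounds uniform. Once this is in hand, the rest of the argument is a routine Fourier-analytic non-stationary phase, and the rapid decay conclusion follows at once.
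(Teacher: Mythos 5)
Your argument is correct and follows the paper's proof essentially step by step: the same two distance estimates (Lipschitz bound $\mathrm{dist}(y_{\mathbf{s}_0},y_\mathbf{s})\le C_1\epsilon$ and the normal-non-degeneracy lower bound $\mathrm{dist}(y_{\mathbf{s}_0},y)\ge C_2\,\mathrm{dist}_X(y,X(\mathbf{s}_0))$ via isometry and tubular neighborhood), the same triangle inequality and choice of $Z_1$, and the same conclusion that $\mathbf{s}\mapsto\chi_{\mathbf{s}_0}(\mathbf{s})\,\Pi(y_\mathbf{s},y)$ is smooth and compactly supported so its Fourier transform decays rapidly, uniformly in $y$. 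No gaps; this is the paper's argument.
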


\bigskip

\subsubsection{$S_\chi$ as an oscillatory integral}
By virtue of Lemma \ref{lem:rapid_decrease}, in the following we can assume 
$$\mathrm{dist}_X\big(y,X(\mathbf{s}_0)\big)\le 2\,(C_1/C_2)\,\epsilon,$$
whence for $\chi_{\mathbf{s}_0}(\mathbf{s})\neq 0$ we have
\begin{eqnarray*}
\mathrm{dist}_X\left(y_{\mathbf{s}},y\right)&\le&
\mathrm{dist}_X\left(y_{\mathbf{s}},y_{\mathbf{s}_0}\right)+
\mathrm{dist}_X\left(y_{\mathbf{s}_0},y\right)\\
&\le&C_1\,\epsilon+2\,(C_1\,C_3/C_2)\,\epsilon=D_0\,\epsilon,
\end{eqnarray*}
for some constant $D_0>0$. In this range, as in (\ref{eqn:szego_microlocal}) and
(\ref{eqn:U_microlocal}) we can represent $\Pi$ as an FIO 
(any smoothing remainder term contributing negligibly to the asymptotics, as above).

Thus we can rewrite (\ref{eqn:fourier_transform_1}) as
\begin{eqnarray}
 \label{eqn:fourier_transform_2}
\mathcal{S}_\chi(\lambda\,\beta,\mathbf{s}_0,y,y)
&\sim&\int_0^{+\infty}\int_{\mathbb{R}^r}\,
\chi_{\mathbf{s}_0}(\mathbf{s})\,e^{i[t\,\psi (y_\mathbf{s},y)-\lambda\,\langle\beta,\mathbf{s}\rangle]}\,
s(t,y_\mathbf{s},y)\,\mathrm{d}\mathbf{s}\,\mathrm{d}t
\nonumber\\
&= &\lambda\,\int_0^{+\infty}\int_{\mathbb{R}^r}\,
\chi_{\mathbf{s}_0}(\mathbf{s})\,e^{i\lambda \,\Psi_\beta(y,t,\mathbf{s})}\,
s(\lambda\,t,y_\mathbf{s},y)\,\mathrm{d}\mathbf{s}\,\mathrm{d}t,
\end{eqnarray}
where we have performed the change of variables $t\mapsto \lambda\,t$, and set 
\begin{equation}
 \label{eqn:phase_Psi_beta}
 \Psi_\beta(y,t,\mathbf{s})=:t\,\psi (y_\mathbf{s},x)-\langle\beta,\mathbf{s}\rangle.
\end{equation}

For $D\gg 0$, let $\varrho=\varrho_D:\mathbb{R}\rightarrow \mathbb{R}_{\ge 0}$ be $\mathcal{C}^\infty$, identically
equal to $1$ on $[1/D,D]$, and supported in $[1/(2D),2D]$.

\begin{lem}
 \label{lem:compact_integration_t}
If $1\gg \epsilon>0$ and $D\gg 0$,
only a rapidly decreasing contribution to the asymptotics of (\ref{eqn:fourier_transform_2}) is lost, if the integrand
is multiplied by $\varrho(t)$.
\end{lem}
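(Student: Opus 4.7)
The plan is to argue by non-stationary phase in the $\mathbf{s}$-integration whenever $t$ lies outside the compact window $[1/(2D),2D]$. First I would identify the stationary locus of $\Psi_\beta$ in the variable $\mathbf{s}$. Using $\partial_{s_k}y_\mathbf{s}\big|_{\mathbf{s}=\mathbf{s}_0}=-\widetilde{\upsilon}_k(y_{\mathbf{s}_0})$ together with Remark \ref{rem:diff_psi} and the identity $\alpha(\widetilde{\upsilon}_k)=-f_k\circ\pi$, a short computation gives
$$
\nabla_\mathbf{s}\Psi_\beta\big|_{\mathbf{s}=\mathbf{s}_0}=t\,\Phi(m_y)-\beta\qquad(y\in X(\mathbf{s}_0)).
$$
Since $M$ is compact and $\mathbf{0}\not\in\Phi(M)$, the function $\|\Phi\|$ is bounded above and bounded away from $0$, so $\Psi_\beta$ can only be stationary in $\mathbf{s}$ at $t=t_0(y)=\|\Phi(m_y)\|^{-1}$, and $t_0$ ranges in a fixed compact subinterval of $(0,+\infty)$. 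Choosing $D$ larger than $\max\{\|\Phi\|_\infty,\|\Phi\|_\infty^{-1},\min_M\|\Phi\|^{-1}\}$ and shrinking $\epsilon$, a uniform continuity argument extends the estimate to all $\mathbf{s}$ in the support of $\chi_{\mathbf{s}_0}$ and all $y$ in a small tubular neighborhood of $X(\mathbf{s}_0)$ (which, by Lemma \ref{lem:rapid_decrease}, is the only relevant range of $y$).

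Next I would bound $\nabla_\mathbf{s}\Psi_\beta$ from below in the two tails. On $\{t\le 1/(2D)\}$ one has $\|t\,\Phi(m_y)\|\le 1/2$, so $\|\nabla_\mathbf{s}\Psi_\beta\|\ge 1/2$ (because $\|\beta\|=1$); on $\{t\ge 2D\}$ one has $t\min_M\|\Phi\|\ge 2$, so $\|\nabla_\mathbf{s}\Psi_\beta\|\ge c\,t$ for some $c>0$. Thus on both tails,
$$
\|\nabla_\mathbf{s}\Psi_\beta\|\ge c'\,(1+t).
$$
On each tail I would then perform $N$ iterated integrations by parts in $\mathbf{s}$ with the first-order operator
$$
L=\frac{1}{i\lambda\|\nabla_\mathbf{s}\Psi_\beta\|^2}\,\overline{\nabla_\mathbf{s}\Psi_\beta}\cdot\nabla_\mathbf{s},
$$
using the cutoff $1-\varrho(t)$ (smoothly split into the two tails) as part of the amplitude. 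Each application produces a factor $\lambda^{-1}(1+t)^{-1}$ times at most one $\mathbf{s}$-derivative of the symbol $s(\lambda t,y_\mathbf{s},y)$, which by the standard symbol estimates is controlled by $C(\lambda t)^d$. The net integrand on the tails is therefore bounded by $C_N\,\lambda^{d-N}\,t^d\,(1+t)^{-N}$.

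The main bookkeeping obstacle is making the remaining $t$-integral converge after IBP: this forces $N>d+1$, at which point the small-$t$ tail $\int_0^{1/(2D)} t^d\,dt$ is trivially finite and the large-$t$ tail $\int_{2D}^{+\infty}t^{d-N}(1+t)^0\,dt$ converges, giving an $O(\lambda^{d-N})$ bound. Letting $N\to\infty$ yields the claimed $O(\lambda^{-\infty})$ estimate uniformly in $y$. The only delicate point is that at $t\to 0^+$ the symbol $s(\lambda t,\cdot,\cdot)$ is genuinely defined and smooth (its asymptotic expansion $\sum_j(\lambda t)^{d-j}s_j$ is trivially bounded by $C(\lambda t)^d$ for $\lambda t\le 1$), so no boundary contribution appears; and all $\mathbf{s}$-derivatives in the IBP act on the $\mathcal{C}^\infty$ dependence of $y_\mathbf{s}$ on $\mathbf{s}$, producing only polynomial factors in $t$ absorbed by the bound above.
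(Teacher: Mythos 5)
Your proof is correct and follows essentially the same route as the paper: compute $\nabla_\mathbf{s}\Psi_\beta|_{\mathbf{s}_0}=t\,\Phi(m_y)-\beta$ on $X(\mathbf{s}_0)$, extend by continuity to the relevant tubular neighborhood and the support of $\chi_{\mathbf{s}_0}$, obtain the uniform lower bound $\|\nabla_\mathbf{s}\Psi_\beta\|\gtrsim 1+t$ on both $t$-tails, and conclude by iterated non-stationary-phase integration by parts in $\mathbf{s}$. Where the paper simply says ``the details are left to the reader,'' you make explicit the convergence bookkeeping of the resulting $t$-integral (the $(1+t)^{-N}$ gain against the $t^d$ amplitude growth), which is a welcome addition rather than a deviation.
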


In particular, as far as the asymptotics are concerned,
we may assume without loss that integration in $t$ is compactly supported in $[1/(2D),2D]$.

\begin{proof}[Proof of Lemma \ref{lem:compact_integration_t}]
 Let $A=:\max \{\|\Phi(m)\|:m\in M\}$, $a=:\min \{\|\Phi(m)\|:m\in M\}$.
Then $A\ge a>0$. 

Suppose first that $y\in X(\mathbf{s}_0)$.
Since $d_{(y,y)}\psi=(\alpha_y,-\alpha_y)$ \cite{bdm_sj}, in view of (\ref{eqn:contact vector field s}) we have with $m=m_y$
$$\left.\partial_\mathbf{s}\psi (y_\mathbf{s},y)\right|_{\mathbf{s}_0}=\Phi(m_y),$$
whence $A\ge \left\|\left.\partial_\mathbf{s}\psi (y_\mathbf{s},y)\right|_{\mathbf{s}_0}\right\|\ge a$.

Therefore, by continuity if $\epsilon>0$ is sufficiently small and $\|\mathbf{s}'-\mathbf{s}_0\|<\epsilon$,
$\mathrm{dist}_X\big(y,X(\mathbf{s}_0)\big)\le 2\,(C_1/C_2)\,\epsilon$ as we are assuming then
$$2A\ge \left\|\left.\partial_\mathbf{s}\psi (y_\mathbf{s},y)\right|_{\mathbf{s}'}\right\|\ge a/2.$$

Consequently, in the same range we have
$$
\left\|\left.\partial_\mathbf{s}\Psi_\beta\right|_{\mathbf{s}'}\right\|=
\|t\,\Phi(m_y)-\beta\|\ge \min\left\{\frac{1}{2}\, ta-1,1-2\,tA\right\};
$$ 
Thus, if say $t\ge 6/a$ then
$$
\left\|\left.\partial_\mathbf{s}\Psi_\beta\right|_{\mathbf{s}'}\right\|
\ge \frac{1}{2}\, \left(\frac{t}{2}+\frac{3}{a}\right)\,a-1
=\frac{t}{4}+\frac{1}{2}.
$$
Similarly, if $0<t<1/(3A)$, then
$$
\left\|\left.\partial_\mathbf{s}\Psi_\beta\right|_{\mathbf{s}'}\right\|\ge 1-2tA\ge 1/3.
$$
In either case, iterated integration by parts in $\mathbf{s}$ (which is legitimate in view of the cut-off
$\chi_{\mathbf{s}_0}$), shows that the corresponding contribution to the asymptotics is $O\left(\lambda^{-\infty}\right)$.
The details are left to the reader.
\end{proof}

We have therefore
\begin{eqnarray}
 \label{eqn:fourier_transform_3}
\lefteqn{\mathcal{S}_\chi(\lambda\,\beta,\mathbf{s}_0,y,y)}\nonumber\\
&\sim&\lambda\,\int_{1/(2D)}^{2D}\int_{\mathbb{R}^r}\,
e^{i\lambda \,\Psi_\beta(y,t,\mathbf{s})}\,\chi_{\mathbf{s}_0}(\mathbf{s})\,\varrho(t)\,
s(\lambda\,t,y_\mathbf{s},y)\,\mathrm{d}\mathbf{s}\,\mathrm{d}t,
\end{eqnarray}
where now integration is compactly supported in $(t,\mathbf{s})$.

\subsubsection{Localization near $X_\beta(\mathbf{s}_0)$}

We have already shown that (\ref{eqn:fourier_transform_1}) is rapidly decreasing outside a tubular
neighborhood of $X(\mathbf{s}_0)$ of radius $D_1\,\epsilon$. The following Lemma will show
that in fact there is no loss in further restricting our analysis to an \lq oblate\rq\, tubular neighborhood
$Z_\epsilon(\mathbf{s}_0,\beta)$ of $X_\beta (\mathbf{s}_0)$.
As we shall see later, this result is instrumental to proving a considerably sharper asymptotic
confinement property.

\begin{lem}
\label{lem:double_localization}
If $D_2\gg 0$,
(\ref{eqn:fourier_transform_1}) is rapidly decreasing uniformly
for $$\mathrm{dist}_X\big(y,X(\mathbf{s}_0)\big)\le D_1\,\epsilon\,\,\,\,\,\mathrm{and}\,\,\,\,\,
\mathrm{dist}_X\big(y,X_\beta\big)\ge D_2\,\epsilon.$$
\end{lem}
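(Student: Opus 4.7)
The plan is to run a non-stationary phase argument in $\mathbf{s}$ on the oscillatory integral representation (\ref{eqn:fourier_transform_3}), which is available thanks to Lemmas \ref{lem:rapid_decrease} and \ref{lem:compact_integration_t} (after possibly shrinking $\epsilon$ and enlarging $D$). The phase is $\Psi_\beta(y,t,\mathbf{s})=t\,\psi(y_\mathbf{s},y)-\langle\beta,\mathbf{s}\rangle$, integration in $t$ is compactly supported in $[1/(2D),2D]$, and integration in $\mathbf{s}$ is compactly supported in $\{\|\mathbf{s}-\mathbf{s}_0\|<\epsilon\}$. It then suffices to exhibit a lower bound of order $\epsilon$ on $\|\partial_\mathbf{s}\Psi_\beta\|$ uniformly on the support of the integrand, with $y$ satisfying the double condition in the statement; iterated integration by parts in $\mathbf{s}$ will then yield $O\big((\lambda\,\epsilon)^{-N}\big)=O(\lambda^{-N})$ for every $N$, because $\epsilon$ is a fixed parameter and the amplitude $s(\lambda t,y_\mathbf{s},y)$ grows only polynomially in $\lambda$.

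The key computation is the one already carried out in the proof of Lemma \ref{lem:compact_integration_t}: if $y\in X(\mathbf{s}_0)$ and $\mathbf{s}=\mathbf{s}_0$, Remark \ref{rem:diff_psi} together with the identity $\alpha_y(\widetilde{\upsilon}_k(y))=-f_k(m_y)$ gives
$$
\partial_\mathbf{s}\psi(y_\mathbf{s},y)\big|_{\mathbf{s}_0}=\Phi(m_y).
$$
For general $y$ with $\mathrm{dist}_X(y,X(\mathbf{s}_0))\le D_1\,\epsilon$ and $\mathbf{s}$ with $\|\mathbf{s}-\mathbf{s}_0\|<\epsilon$, a straightforward Taylor expansion (using the smoothness of $\psi$ near the diagonal and of $f_k\circ\pi$) yields
$$
\partial_\mathbf{s}\psi(y_\mathbf{s},y)=\Phi(m_y)+R(y,\mathbf{s}),\qquad \|R(y,\mathbf{s})\|\le K\,\epsilon,
$$
for some constant $K>0$ depending only on $\mathbf{s}_0$, $M$ and the uniform $\mathcal{C}^2$-norms of the relevant smooth objects, and uniformly in $t\in[1/(2D),2D]$. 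Hence
$$
\partial_\mathbf{s}\Psi_\beta(y,t,\mathbf{s})=t\,\Phi(m_y)-\beta+t\,R(y,\mathbf{s}).
$$

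Now I bring in the transversality hypothesis through Corollary \ref{cor:distance_comparison_1}: for every $m\in M$ and every $t>0$,
$$
\|t\,\Phi(m)-\beta\|\ge C\,\mathrm{dist}_M(m,M_\beta),
$$
with $C>0$ independent of $t$. Since $\pi:X\to M$ is a Riemannian submersion with compact fibres and $X_\beta=\pi^{-1}(M_\beta)$, there exists $C'>0$ such that $\mathrm{dist}_M(m_y,M_\beta)\ge C'\,\mathrm{dist}_X(y,X_\beta)$; combined with the assumption $\mathrm{dist}_X(y,X_\beta)\ge D_2\,\epsilon$, this gives $\|t\,\Phi(m_y)-\beta\|\ge C\,C'\,D_2\,\epsilon$. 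Using $t\le 2D$, we conclude
$$
\|\partial_\mathbf{s}\Psi_\beta(y,t,\mathbf{s})\|\ge\big(C\,C'\,D_2-2D\,K\big)\,\epsilon,
$$
and the right-hand side is $\ge \epsilon$ as soon as $D_2$ is chosen large enough (depending on $D,K,C,C'$ but independent of $y$, $\mathbf{s}$, $t$, and $\lambda$). Standard iterated integration by parts in $\mathbf{s}$ on (\ref{eqn:fourier_transform_3}), using the usual first-order operator $L=(i\lambda\|\partial_\mathbf{s}\Psi_\beta\|^2)^{-1}\,\overline{\partial_\mathbf{s}\Psi_\beta}\cdot\partial_\mathbf{s}$ and the polynomial growth of $s(\lambda t,\cdot,\cdot)$ in $\lambda$, then gives rapid decay in $\lambda$, uniformly in the stated range of $y$. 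The main (only) obstacle is the continuity estimate controlling the error term $R(y,\mathbf{s})$ uniformly away from the model case $y\in X(\mathbf{s}_0)$, $\mathbf{s}=\mathbf{s}_0$; this is routine given the smoothness of the Szeg\H{o} phase on a neighbourhood of the diagonal and the compactness of $X(\mathbf{s}_0)$.
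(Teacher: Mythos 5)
Your proposal is correct and follows essentially the same line as the paper's proof: identify $\partial_{\mathbf{s}}\Psi_\beta$ with $t\,\Phi(m)-\beta$ up to an $O(\epsilon)$ perturbation, invoke Corollary \ref{cor:distance_comparison_1} for a lower bound of order $D_2\,\epsilon$, choose $D_2$ large enough to beat the perturbation, and conclude by iterated integration by parts in $\mathbf{s}$. The only inessential difference is bookkeeping: you evaluate $\Phi$ and the distance estimate directly at $m_y$ and fold the perturbations into a single error $R(y,\mathbf{s})$, whereas the paper passes to a nearby point $x\in X(\mathbf{s}_0)$ and applies a two-term Lipschitz bound in $(\mathbf{s},y)$.
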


\begin{proof}[Proof of Lemma \ref{lem:double_localization}]
 Suppose first that $y\in X(\mathbf{s}_0)$ and $\mathrm{dist}_X\big(y,X_\beta\big)\ge
 D_2\,\epsilon$ for some $D_2>0$. Then 
 \begin{eqnarray*}
  \big\|\partial _\mathbf{s}\Psi_\beta(y,t,\mathbf{s}_0)\big\|=\big\|t\,\Phi(m_y)-\beta\big\|
  \ge C\,D_2\,\epsilon,
 \end{eqnarray*}
where $C>0$ is as in Corollary \ref{cor:distance_comparison_1}.

Suppose now that $\|\mathbf{s}'-\mathbf{s}_0\|<\epsilon$ (as will be the case for 
$\chi_{\mathbf{s}_0}(\mathbf{s}')\neq 0$), and $\mathrm{dist}_X\big(y,X(\mathbf{s}_0)\big)\le D_1\,\epsilon$.
Pick $x\in X(\mathbf{s}_0)$ with $\mathrm{dist}_X(y,x)\le D_1\,\epsilon$. 
Then, for some appropriate $A_1>0$ we have
\begin{eqnarray*}
 \big\|\partial_\mathbf{s}\Psi_\beta(y,t,\mathbf{s}')\big\|&\ge&
  \big\|\partial _\mathbf{s}\Psi_\beta(x,t,\mathbf{s}_0,)\big\|-A_1\,\big(\|\mathbf{s}'-\mathbf{s}_0\|
 +\mathrm{dist}_X(y,x)\big)\\
 &\ge&\big[C\,D_2-A_1\,\big(1+D_1\big)\big]\,\epsilon.
\end{eqnarray*}
Again, the claim follows by iterated integration by parts in $\mathrm{d}\mathbf{s}$.

\end{proof}

In particular, since $\phi^X$ is locally free on $X_\beta$, perhaps after passing to a smaller
neighborhood we may assume without loss that it is locally free on $Z_\epsilon(\mathbf{s}_0,\beta)$.

\subsection{Asymptotic concentration in $\mathbf{w}$ and $\tau_M$}

\subsubsection{A local parametrization of $Z_\epsilon(\mathbf{s}_0,\beta)$ by HLC}

We have already remarked that $X_\beta (\mathbf{s}_0)$ is an $S^1$-bundle over the union of some
connected components of $M_\beta(\mathbf{s}_0)$, which is the transverse intersection of
$M(\mathbf{s}_0)$ and $M_\beta$ (Lemma \ref{lem:transversality}).
We can then use a smoothly varying system of HLC centered at $x\in X(\mathbf{s}_0,\beta)$
to locally
parametrize points $y\in Z_\epsilon(\mathbf{s}_0,\beta)$ as $y=x+\mathbf{v}$, where 
and $\mathbf{v}\in N_{m_x}\big(M_\beta(\mathbf{s}_0)\big)$ have norms
bounded linearly in $\epsilon$ (\S \ref{sctn:transversality_fixed_loci}); in general, this is possible only locally along
$X_\beta(\mathbf{s}_0)$.

In turn, by Lemma \ref{lem:transversality} we can uniquely decompose $\mathbf{v}$ as
an orthogonal direct sum $\mathbf{v}=\mathbf{w}+J_m\big(\xi_M(m)\big)$, where 
$\mathbf{w}\in \ker \left(d_m\phi^M_{\mathbf{s}}-\mathrm{id}_{T_mM}\right)^\perp$
and $\xi\in \ker \Phi(m)$. In addition, since $\mathbf{s}\sim \mathbf{s}_0$, we can
write $\mathbf{s}=\mathbf{s}_0+\tau$, where $\tau$, a small displacement in $\mathbb{R}^r$,
is thought of as an element of $\mathfrak{t}$. 
Here $\|\mathbf{w}\|,\,\|\xi\|,\,\|\tau\|\le D\,\epsilon$ for some appropriate constant 
$D$.
In this notation, Theorem \ref{thm:main rapid decrease} is equivalent to the statement that
$\mathcal{S}_\chi(\lambda\,\beta,\mathbf{s}_0,y,y)$ is rapidly decreasing as $\lambda\rightarrow\infty$,
unless $\max\{\|\mathbf{w}\|,\|\xi\|\}=O\left(\lambda^{\delta-1/2}\right)$.

In this section, we shall establish Theorem \ref{thm:main_directional_trace}
\lq in the $\mathbf{w}$-direction\rq, and establish that locus where 
$\|\tau_M(m)\|\ge C\,\lambda^{\delta-1/2}$ contributes negligly to the asymptotics
of (\ref{eqn:fourier_transform_1}).

\subsubsection{The bound on $\mathbf{w}$ coming $\partial_t\Psi_\beta$}

\begin{prop}
\label{prop:estimate_distance}
 There exists a constant $a>0$ such that, perhaps after passing to a smaller $\epsilon>0$,
the following holds. 
Suppose $x\in X_\beta(\mathbf{s}_0)$, 
 \begin{equation}
  \label{eqn:y parametrized}
  y=y(x,\mathbf{w},\xi)=x+\Big(\mathbf{w}+J_m\big(\xi_M(m)\big)\Big)\in Z_\epsilon (\mathbf{s}_0,\beta), 
 \end{equation}
 and $\mathbf{s}=\mathbf{s}_0+\tau$. Then we have
 \begin{equation*}
  \mathrm{dist}_X\big(y_\mathbf{s},y\big)^2\ge \mathrm{dist}_M\big(\pi(y)_\mathbf{s},\pi(y)\big)^2
 \ge  a\,\left(\|\mathbf{w}\|^2+\|\tau_M(m_x)\|^2\right).
 \end{equation*}
 
\end{prop}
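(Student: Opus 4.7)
The first inequality $\mathrm{dist}_X(y_{\mathbf{s}},y)\ge \mathrm{dist}_M(\pi(y)_{\mathbf{s}},\pi(y))$ is immediate: the projection $\pi:X\to M$ is a Riemannian submersion (with respect to the metrics induced by $\alpha$ and by $\omega$, as in the volume form convention $dV_X=(\alpha/2\pi)\wedge \pi^*dV_M$), hence 1-Lipschitz, and $\pi\circ \phi^X_{\mathbf{s}}=\phi^M_{\mathbf{s}}\circ \pi$. So the whole game is to produce the lower bound on $\mathrm{dist}_M(\phi^M_{\mathbf{s}}(m'),m')$ where $m'=\pi(y)$.

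The plan is to work in the HLC centered at $x$. Writing $m=m_x$ and using that the HLC induce on $M$ a system of local coordinates centered at $m$ (property (2) in \S \ref{sctn:heisenberg lc}), we have $m'=m+\mathbf{v}$ with $\mathbf{v}=\mathbf{w}+J_m(\xi_M(m))$. Since $x\in X(\mathbf{s}_0)$ implies $m\in M(\mathbf{s}_0)$, the map $\phi^M_{\mathbf{s}_0}$ fixes $m$, and we set $A=d_m\phi^M_{\mathbf{s}_0}$. Using $\mathbf{s}=\mathbf{s}_0+\tau$ and the Abelianness of the action, $\phi^M_{\mathbf{s}}=\phi^M_{\mathbf{s}_0}\circ \phi^M_{\tau}$; a direct Taylor expansion in local coordinates then gives
\[
\phi^M_{\mathbf{s}}(m')-m'\;=\;(A-\mathrm{Id})\mathbf{v}+\tau_M(m)+E,
\]
with the error $E$ bounded by $C(\|\mathbf{v}\|^2+\|\tau\|^2+\|\tau\|\,\|\mathbf{v}\|)$.

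The three structural facts needed to simplify $(A-\mathrm{Id})\mathbf{v}$ are: (i) $A$ is orthogonal (since $\phi^M_{\mathbf{s}_0}$ is an isometry of $(M,g)$), (ii) $A\,J_m=J_m\,A$ (holomorphicity of the action), and (iii) $A\,\eta_M(m)=\eta_M(m)$ for every $\eta\in\mathfrak{t}$ (Abelianness, as recalled in Notation \ref{notn:representing matrix} and~(\ref{eqn:Abelian_inclusion})). Combining (ii) and (iii) gives $(A-\mathrm{Id})J_m(\xi_M(m))=0$, so
\[
(A-\mathrm{Id})\mathbf{v}=(A-\mathrm{Id})\mathbf{w}.
\]
By construction $\mathbf{w}\in \ker(A-\mathrm{Id})^{\perp}$, and by (i) the operator $A-\mathrm{Id}$ is bounded below on this subspace: there is $c_1>0$ with $\|(A-\mathrm{Id})\mathbf{w}\|\ge c_1\|\mathbf{w}\|$. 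Finally, again by (i), $\mathrm{Im}(A-\mathrm{Id})=\ker(A-\mathrm{Id})^\perp$, while $\tau_M(m)\in\ker(A-\mathrm{Id})$ by (iii); therefore the two leading terms $(A-\mathrm{Id})\mathbf{w}$ and $\tau_M(m)$ are mutually orthogonal. Pythagoras yields
\[
\|\phi^M_{\mathbf{s}}(m')-m'\|^2\;\ge\;c_1^2\,\|\mathbf{w}\|^2+\|\tau_M(m)\|^2-\|E\|\cdot(\text{lin.\ terms}),
\]
and for $\epsilon$ small enough the error $E$ is dominated by the leading quadratic form (one uses $\|\mathbf{w}\|,\|\xi\|,\|\tau\|\le D\epsilon$, so all cross/higher terms carry at least one factor of $\epsilon$ relative to the main ones). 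This gives the claim with any $a<\min(c_1^2,1)/2$, say.

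The only delicate point I expect is keeping the remainder $E$ under control; the key check is that the tangent vector $\tau_M(m)$ appears at first order and is orthogonal to the non-vanishing part of $(A-\mathrm{Id})\mathbf{v}$, so that there is no cancellation between the $\mathbf{w}$- and $\tau$-contributions. Everything else is a bookkeeping exercise in the HLC expansion, using only unitarity of $A$, the holomorphic Abelian nature of the action, and the orthogonal decomposition of $T_mM$ into the fixed subspace of $A$ and its complement.
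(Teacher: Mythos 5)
Your outline follows the paper's proof route — geodesic HLC at $m_x$, the linearization $A$ and the identities $AJ=JA$, $A\eta_M(m_x)=\eta_M(m_x)$ to kill the $J\xi_M$-contribution, orthogonality of $(A-\mathrm{Id})\mathbf{w}$ and $\tau_M(m_x)$, then Pythagoras — but the error-control step has a genuine gap. The target lower bound is $a\big(\|\mathbf{w}\|^2+\|\tau_M(m_x)\|^2\big)$, and neither $\|\xi\|$ nor the component of $\tau$ killed by $\mathrm{val}_{m_x}$ appear on the right. Your error bound $\|E\|\le C\big(\|\mathbf{v}\|^2+\|\tau\|^2+\|\tau\|\,\|\mathbf{v}\|\big)$, on the other hand, contains $\|\xi\|^2$ (via $\|\mathbf{v}\|^2$) and the full $\|\tau\|^2$. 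Take $\mathbf{w}=0$, with $\xi$ and the kernel part $\tau^K\in\ker\mathrm{val}_{m_x}$ of size $\sim\epsilon$, and the complementary part $\tau^\perp$ arbitrarily small: then the right-hand side is $\sim a\,\|\tau^\perp_M(m_x)\|^2\to 0$, while your bound on $\|E\|$ stays $\sim\epsilon^2$, so ``Pythagoras minus error'' produces a negative lower bound. Saying that ``all cross/higher terms carry at least one factor of $\epsilon$ relative to the main ones'' compares the error to $\epsilon$, but the main terms themselves are not of order $\epsilon$ — they can be arbitrarily smaller — so a factor of $\epsilon$ by itself does not absorb anything.

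The missing ideas are two exactness facts that the paper exploits. First, in \emph{geodesic} coordinates centered at the fixed point $m_x$, the isometry $\phi^M_{-\mathbf{s}_0}$ acts exactly linearly, so there is no $O(\|\mathbf{v}\|^2)$ error from that factor; this is precisely why the paper opts for geodesic adapted coordinates. Second, writing $\tau=\tau^K+\tau^\perp$ with $\tau^K\in\ker\mathrm{val}_{m_x}$, the flow $\phi^M_{-\tau^K}$ also fixes $m_x$ and hence acts exactly linearly as some $B_{\tau^K}$, which moreover fixes $J_{m_x}\xi_M(m_x)$ by the same Abelian/holomorphic argument; the only genuine Taylor remainder comes from $\phi^M_{-\tau^\perp}$ and is $O\big(\|\tau^\perp\|\,(\|\mathbf{v}\|+\|\tau^\perp\|)\big)$, and $\|\tau^\perp\|$ \emph{is} comparable to $\|\tau_M(m_x)\|$. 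With these two observations the error is genuinely $O(\epsilon)\big(\|\mathbf{w}\|+\|\tau_M(m_x)\|\big)$ and the absorption closes. The paper's proof writes its error as $\langle\tau,R_1(\mathbf{w},\xi,\tau)\rangle$ and absorbs it silently; the structure just described is what makes that legitimate, and your cruder $O(\|\mathbf{v}\|^2+\|\tau\|^2)$ bound loses it.
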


Before delving into the proof, let us introduce a piece of notation.
We shall let $R_j$ be a general $\mathcal{C}^\infty$ function defined on some open neighborhood
of the origin in a Euclidean space, and vanishing to $j$-th order at the origin; 
$R_j$ is allowed to vary from line to line. Furthermore, let $A$ be as in Notation \ref{notn:representing matrix}
in \S \ref{sctn:heisenberg lc}.

\begin{proof}[Proof of Proposition \ref{prop:estimate_distance}]

The first inequality 
is obvious, since the projection $\pi:X\rightarrow M$ is a Riemannian submersion
and interwines $\phi^X$ and $\phi^M$, so let us focus on the second.

Consider the system of adapted local coordinates on $M$ centered at $m_x$, underlying
the given HLC system on $X$ centered at $x$ \cite{sz}. 
Adapted local coordinates needn't be 
holomorphic, and induce a unitary isomorphism $\mathbb{C}^d\cong T_{m_x}M$. 
It is unnecessary but convenient to assume that they are given by geodesic coordinates centered at $m_x$.
By construction, 
\begin{equation}
 \label{eqn:local_coordinates_z}
 \pi(y)=m_x+\Big(\mathbf{w}+J_m\big(\xi_M(m_x)\big)\Big).
\end{equation}
Since $\phi^X_{\mathbf{s}_0}$ is a Riemannian isometry fixing $m_x$, 
\begin{eqnarray*}
\pi(y)_{\mathbf{s}_0}&=&m_x+d_x\phi^X_{-\mathbf{s}_0}\big(\mathbf{w}+J_m\big(\xi_M(m_x)\big)\Big)\\
&=&m_x+A\,\Big(\mathbf{w}+J_m\big(\xi_M(m_x)\big)\Big)=
m_x+\Big(A\mathbf{w}+J_m\big(\xi_M(m_x)\big)\Big).
\end{eqnarray*}
where $A=A_x$ is as above. Thus, since $\mathbf{s}=\mathbf{s}_0+\tau$,
\begin{eqnarray}
 \label{eqn:local_coordinates_z_action}
 \pi(y)_{\mathbf{s}}&=&\big(\pi(y)_{\mathbf{s}_0})_\tau\\
&=&m_x+\Big(A\mathbf{w}+J_m\big(\xi_M(m_x)\big)-\tau_M(m_x)+
 \langle\tau,R_1(\mathbf{w},\xi,\tau)\rangle\Big).\nonumber
\end{eqnarray}

Now, since adapted local coordinates are isometric at the origin, perhaps after restricting the domain of definition
we may assume that
\begin{equation}
 \label{eqn:comparison_norm_HLC}
 2\,\|\mathbf{v}_1-\mathbf{v}_2\|\ge 
\mathrm{dist}_M\big(m_x+\mathbf{v}_1,m_x+\mathbf{v}_2)\ge \frac{1}{2}\,\|\mathbf{v}_1-\mathbf{v}_2\|.
\end{equation}
Thus we see from (\ref{eqn:local_coordinates_z}), (\ref{eqn:local_coordinates_z_action}) and
(\ref{eqn:comparison_norm_HLC}) that for some appropriate constant $a>0$
\begin{eqnarray*}
 \lefteqn{\mathrm{dist}_M\big(\pi(y)_\mathbf{s},\pi(y)\big)^2}\\
 &\ge&\frac{1}{2+O(\epsilon)}\,\left[\|A\mathbf{w}-\mathbf{w}\|^2+\|\tau_M(m_x)\|^2\right]\ge 
 a\,\left(\|\mathbf{w}\|^2+\|\tau_M(m_x)\|^2\right),
\end{eqnarray*}
since $A-I$ is invertible on $\ker \left(A-I\right)^\perp$.
\end{proof}

\begin{cor}
\label{cor:rapid_decrease_w}
Let $y=y(x,\mathbf{w},\xi)$ be as in (\ref{eqn:y parametrized}). Then for any fixed $D,\,\delta>0$
uniformly for $\|\mathbf{w}\|\ge D\,\lambda^{\delta-1/2}$ we have 
 $$
 \mathcal{S}_\chi(\lambda\,\beta,\mathbf{s}_0,y,y)=O\left(\lambda^{-\infty}\right).
 $$
\end{cor}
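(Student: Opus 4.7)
The plan is to use the oscillatory integral representation (\ref{eqn:fourier_transform_3}) together with the standard positivity of the imaginary part of the Szeg\"{o} phase $\psi$ near the diagonal, combined with the lower bound of Proposition \ref{prop:estimate_distance}. By Lemma \ref{lem:rapid_decrease} and Lemma \ref{lem:double_localization}, we may assume $y \in Z_\epsilon(\mathbf{s}_0,\beta)$ and that the representation (\ref{eqn:fourier_transform_3}) is valid; it therefore suffices to bound the oscillatory integral on its right-hand side under the hypothesis $\|\mathbf{w}\|\ge D\,\lambda^{\delta-1/2}$.

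The first ingredient is the standard estimate
\begin{equation*}
\Im\psi(x',y') \ge c\cdot \mathrm{dist}_X(x',y')^2,
\end{equation*}
valid for $(x',y')$ sufficiently close to the diagonal, with $c>0$. This may be read off from the HLC expansion in Remark \ref{rem:espansione per psi}: the leading real quadratic form coming from $t(1-\cos\theta)+\tfrac{t}{2}\|\mathbf{v}-\mathbf{w}\|^2\cos\theta$ dominates the cross contribution from $-t\,\omega_0\sin\theta$ for small $\theta$ and small $\|\mathbf{v}-\mathbf{w}\|$, and the cubic remainder is negligible. Invoking Proposition \ref{prop:estimate_distance}, we obtain uniformly
\begin{equation*}
\Im\psi(y_\mathbf{s},y) \ge c\,\mathrm{dist}_X(y_\mathbf{s},y)^2 \ge c\,a\,\|\mathbf{w}\|^2.
\end{equation*}

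Since $t\in [1/(2D),2D]$ on the support of $\varrho$, the modulus of the exponential factor in (\ref{eqn:fourier_transform_3}) is bounded by
\begin{equation*}
\bigl|e^{i\lambda\,\Psi_\beta(y,t,\mathbf{s})}\bigr| = e^{-\lambda t\,\Im\psi(y_\mathbf{s},y)} \le e^{-c'\,\lambda\|\mathbf{w}\|^2}
\end{equation*}
for some $c'>0$. Under the hypothesis $\|\mathbf{w}\| \ge D\,\lambda^{\delta-1/2}$, this becomes $\le e^{-c'D^2\,\lambda^{2\delta}}$, which decreases faster than any polynomial in $\lambda$.

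To conclude, the amplitude $s(\lambda t,y_\mathbf{s},y)$ has polynomial growth in $\lambda$ of order $\lambda^d$ coming from its asymptotic expansion $s(t,x,y)\sim\sum_{j\ge 0}t^{d-j}\,s_j(x,y)$, and the prefactor $\lambda$ together with integration over the compact region in $(t,\mathbf{s})$ contribute only additional polynomial factors in $\lambda$; all such factors are absorbed into $e^{-c'D^2\,\lambda^{2\delta}}$, yielding $\mathcal{S}_\chi(\lambda\beta,\mathbf{s}_0,y,y)=O(\lambda^{-\infty})$ uniformly. The main technical point is ensuring that the positivity constant $c$ for $\Im\psi$ and the constant $a$ from Proposition \ref{prop:estimate_distance} can be chosen uniformly over $x\in X_\beta(\mathbf{s}_0)$ and in the auxiliary parameters $\tau,\xi$; this follows from compactness of $X_\beta(\mathbf{s}_0)$ together with passing, if needed, to a yet smaller $\epsilon>0$ in the cutoff $\chi_{\mathbf{s}_0}$.
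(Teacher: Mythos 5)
Your proof is correct, and it takes a route that is genuinely different from the paper's at the final step, although both rely on the same two key inputs: the Boutet de Monvel--Sj\"ostrand lower bound $\Im\psi(y_\mathbf{s},y)\ge D'\,\mathrm{dist}_X(y_\mathbf{s},y)^2$ and the lower bound $\mathrm{dist}_X(y_\mathbf{s},y)^2\ge a\,\|\mathbf{w}\|^2$ from Proposition \ref{prop:estimate_distance}. The paper uses these two facts to obtain $\left|\partial_t\Psi_\beta(y,t,\mathbf{s})\right|=|\psi(y_\mathbf{s},y)|\ge|\Im\psi(y_\mathbf{s},y)|\ge b\,\|\mathbf{w}\|^2\ge bD\,\lambda^{2\delta-1}$, and then performs iterated integration by parts in $t$ (legitimate since $\varrho$ is supported away from the endpoints), gaining a factor $\lambda^{1-2\delta}\cdot\lambda^{-1}=\lambda^{-2\delta}$ at each step. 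You instead note that $\Re\left(i\lambda\,\Psi_\beta\right)=-\lambda t\,\Im\psi(y_\mathbf{s},y)\le-c'\lambda\,\|\mathbf{w}\|^2\le -c'D^2\,\lambda^{2\delta}$ on the support of $\varrho$, so the integrand already decays super-polynomially pointwise, and the polynomially bounded amplitude (coming from $s(\lambda t,\cdot,\cdot)=O(\lambda^d)$) and the compact $(t,\mathbf{s})$-domain are absorbed without any further integration by parts. Your argument is the more direct one here, and it works precisely because $\Im\psi\ge 0$; the paper's non-stationary-phase-in-$t$ argument is the more standard template, and it is also the one actually reused verbatim in the next corollary (Corollary \ref{cor:rapid_decrease_tau_M}) and again in Corollary \ref{cor:A B lambda decomposition}, which is likely why the author phrased it that way. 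One small remark: your justification of the lower bound on $\Im\psi$ from Remark \ref{rem:espansione per psi} needs $\|\mathbf{v}\|,\|\mathbf{w}\|$ bounded by a sufficiently small $\epsilon$ so that the cross term $-\omega_0(\mathbf{v},\mathbf{w})\sin\theta=\omega_0(\mathbf{v},\mathbf{v}-\mathbf{w})\sin\theta$ is dominated by the positive-definite quadratic in $(\theta,\mathbf{v}-\mathbf{w})$; this holds in the present setting, and you could alternatively just cite Corollary 1.3 of \cite{bdm_sj} as the paper does.
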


\begin{proof}[Proof of Corollary \ref{cor:rapid_decrease_w}]
In view of (\ref{eqn:phase_Psi_beta}), we have 
$$
\partial _t\Psi_\beta(y,t,\mathbf{s})=\psi (y_\mathbf{s},y).
$$
Thus, 
$
|\partial _t\Psi_\beta(y,\mathbf{s})|\ge |\Im \psi (y_\mathbf{s},y)|$. On the other hand,
by Corollary 1.3 of \cite{bdm_sj} 
for some $D'>0$ we have
$$
\Im \psi (y_\mathbf{s},y)\ge D'\,\mathrm{dist}_X(y_\mathbf{s},y)^2.
$$
Therefore, given Proposition \ref{prop:estimate_distance} and recalling that HLC are isometric at the origin,
\begin{equation}
 \label{eqn:t_derivative_bound}
\big| \partial _t\Psi_\beta(y,t,\mathbf{s})\big|=|\psi (y_\mathbf{s},y)|\ge |\Im \psi (y_\mathbf{s},y)|
\ge b\,\|\mathbf{w}\|^2
\end{equation}
for some constant $b>0$. For $\|\mathbf{w}\|\ge D\,\lambda^{\delta-1/2}$, therefore, we
get
$$
\big| \partial _t\Psi_\beta(y,t,\mathbf{s})\big|\ge b\,D\,\lambda^{2\,\delta-1}.
$$
Hence
iterated integration by
parts in $t$ introduces at each step a factor $\lambda^{1-2\,\delta}\,\lambda^{-1}=\lambda^{-2\,\delta}$.
 
\end{proof}

The same argument proves the following:

\begin{cor}
\label{cor:rapid_decrease_tau_M}
 For any fixed $D>0$, the locus $(\tau,t)$ where $\|\tau_M(m_x)\|\le D\,\lambda^{\delta-1/2}$
 contributes negligibly to the asymptotics of $\mathcal{S}_\chi(\lambda\,\beta,\mathbf{s}_0,y,y)$.
 $$
 \mathcal{S}_\chi(\lambda\,\beta,\mathbf{s}_0,y,y)=O\left(\lambda^{-\infty}\right),
 $$
 uniformly for $\|\tau_M(m_x)\|\ge D\,\lambda^{\delta-1/2}$.
\end{cor}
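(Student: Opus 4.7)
The plan is to adapt the argument used in the proof of Corollary \ref{cor:rapid_decrease_w}, with the role of $\|\mathbf{w}\|$ played by $\|\tau_M(m_x)\|$. The key point is that the lower bound
$$
\mathrm{dist}_X(y_{\mathbf{s}}, y)^2 \ge a\,\bigl(\|\mathbf{w}\|^2 + \|\tau_M(m_x)\|^2\bigr)
$$
of Proposition \ref{prop:estimate_distance} controls both summands symmetrically, so either of them can serve to drive the stationary-phase-style rapid decay through a $t$-derivative estimate.

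Concretely, I would first insert into the oscillatory integral \eqref{eqn:fourier_transform_3} a smooth cutoff $\rho_\lambda(\tau)$ supported in the region $\{\|\tau_M(m_x)\|\ge D\,\lambda^{\delta-1/2}\}$, in order to isolate the contribution of this locus; the aim is to show that the resulting localized integral is $O(\lambda^{-\infty})$. Starting from
$$
\partial_t \Psi_\beta(y,t,\mathbf{s}) = \psi(y_\mathbf{s}, y),
$$
as in \eqref{eqn:phase_Psi_beta}, the positivity of $\Im\psi$ (Corollary 1.3 of \cite{bdm_sj}) combined with Proposition \ref{prop:estimate_distance} yields
$$
\bigl|\partial_t \Psi_\beta(y,t,\mathbf{s})\bigr| \ge \bigl|\Im \psi(y_\mathbf{s},y)\bigr| \ge D'\,\mathrm{dist}_X(y_\mathbf{s},y)^2 \ge b\,\|\tau_M(m_x)\|^2,
$$
which pointwise on the support of $\rho_\lambda$ gives $|\partial_t \Psi_\beta|\ge b\,D^2\,\lambda^{2\delta-1}$. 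Since by Lemma \ref{lem:compact_integration_t} integration in $t$ is compactly supported in $[1/(2D),2D]$, iterated integration by parts in $t$ is legitimate and introduces at each step a factor $\lambda^{-1}\cdot\lambda^{1-2\delta}=\lambda^{-2\delta}$. Because $\delta>0$, any number of iterations gives the desired rapid decay.

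The one place where a bit of care is required, and what I expect to be the main technical subtlety, is the management of the auxiliary cutoff $\rho_\lambda$: its derivatives in $\tau$ may blow up like a positive power of $\lambda$. However, the integrations by parts are performed in the variable $t$, which is disjoint from $\tau$, so $\rho_\lambda$ is never differentiated and passes through the procedure as a bounded multiplicative factor. The only other quantities differentiated in $t$ are $\Psi_\beta$ itself and the amplitude $s(\lambda t, y_\mathbf{s}, y)$; their $t$-derivatives produce only polynomial growth in $\lambda$, absorbed by the exponentially improving factor $\lambda^{-2\delta}$ from each integration by parts. Hence the argument proceeds in complete parallel with the proof of Corollary \ref{cor:rapid_decrease_w}, yielding $\mathcal{S}_\chi(\lambda\,\beta,\mathbf{s}_0,y,y)=O\left(\lambda^{-\infty}\right)$ uniformly on the stated locus.
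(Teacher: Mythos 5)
Your argument is correct and is exactly what the paper intends by the remark that ``the same argument proves'' this corollary: the symmetric lower bound $\mathrm{dist}_X(y_{\mathbf{s}},y)^2\ge a\big(\|\mathbf{w}\|^2+\|\tau_M(m_x)\|^2\big)$ of Proposition \ref{prop:estimate_distance} lets the non-stationary-phase estimate on $\partial_t\Psi_\beta$ from the proof of Corollary \ref{cor:rapid_decrease_w} be driven equally well by $\|\tau_M(m_x)\|$ as by $\|\mathbf{w}\|$, and since $\Psi_\beta$ is affine in $t$ each integration by parts indeed costs only the factor $\lambda^{-2\delta}$. Your observation that the $\lambda$-dependent cutoff $\rho_\lambda(\tau)$ is never differentiated, because the integrations by parts are carried out in the disjoint variable $t$, is a correct and useful clarification of the point the paper leaves implicit.
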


Thus without loss we may assume from now on that for some fixed $D>0$
\begin{equation}
 \label{eqn:restriction in w}
 \|\mathbf{w}\|\le D\,\lambda^{\delta-1/2},
\end{equation}
and restrict our analysis of the oscillatory integral 
(\ref{eqn:fourier_transform_3}) to the locus
\begin{equation}
 \label{eqn:w_tau_M_reduction}
 \left\{\tau\in \mathfrak{t}:\|\tau_M(m_x)\|\le D\,\lambda^{\delta-1/2}\right\}.
\end{equation}
More precisely, we have
 
\begin{cor}
 Only a rapidly decreasing contribution to the asymptotics of $\mathcal{S}_\chi(\lambda\,\beta,\mathbf{s}_0,y,y)$
 is lost,
 if the integrand is multiplied by a rescaled cut-off function of the form
 $\rho\left(\lambda^{1/2-\delta}\,\tau_M(m_x)\right)$, where $\rho$ is
 compactly supported in a neighborhood of the origin, and identically equal to $1$ sufficiently
 close to $\mathbf{0}\in \mathbb{R}^r$.
\end{cor}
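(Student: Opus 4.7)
The plan is to write the trivial partition of unity
\[
1 \;=\; \rho\!\left(\lambda^{1/2-\delta}\,\tau_M(m_x)\right) \;+\; \Big[1-\rho\!\left(\lambda^{1/2-\delta}\,\tau_M(m_x)\right)\Big]
\]
and insert it into the oscillatory integral representation (\ref{eqn:fourier_transform_3}). The first summand is exactly the rescaled cut-off appearing in the statement, so the task reduces to showing that the complementary piece, corresponding to $1-\rho$, contributes $O\!\left(\lambda^{-\infty}\right)$ to the asymptotics.

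By hypothesis $\rho\equiv 1$ on some Euclidean ball $B(\mathbf{0},c_0)\subseteq\mathbb{R}^r$, so the factor $1-\rho(\lambda^{1/2-\delta}\tau_M(m_x))$ vanishes identically unless $\|\tau_M(m_x)\|\ge c_0\,\lambda^{\delta-1/2}$. On this locus Corollary \ref{cor:rapid_decrease_tau_M}, applied with $D=c_0$, already gives an $O\!\left(\lambda^{-\infty}\right)$ contribution, and one is finished.

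For completeness, the mechanism behind that corollary is a $t$-integration by parts in (\ref{eqn:fourier_transform_3}), based on the identity $\partial_t\Psi_\beta=\psi(y_\mathbf{s},y)$, the Boutet de Monvel--Sj\"{o}strand estimate $\Im\psi(y_\mathbf{s},y)\gtrsim\mathrm{dist}_X(y_\mathbf{s},y)^2$, and Proposition \ref{prop:estimate_distance}; these combine to yield $|\partial_t\Psi_\beta|\ge b\,\|\tau_M(m_x)\|^2\ge bc_0^2\,\lambda^{2\delta-1}$ on the support of $1-\rho$, so each integration by parts produces a gain of $\lambda^{-2\delta}$. The insertion of the extra amplitude factor $1-\rho(\lambda^{1/2-\delta}\tau_M(m_x))$ does not interfere with this scheme: the factor is uniformly bounded and independent of $t$, while its $\tau$-derivatives (which would generate powers of $\lambda^{1/2-\delta}$) never enter a $t$-IBP. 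The classical symbol $s(\lambda t,y_\mathbf{s},y)$ likewise keeps only polynomial growth in $\lambda$ under $t$-differentiation.

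I do not anticipate any substantive obstacle: the statement is essentially a reformulation, in terms of an amplitude multiplier, of the rapid-decrease result already obtained in Corollary \ref{cor:rapid_decrease_tau_M}, and the only thing to check is that the routine polynomial $\lambda$-growth of the $t$-derivatives of the amplitude is dominated by the repeated $\lambda^{-2\delta}$ gains from integration by parts.
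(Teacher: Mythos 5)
Your proof is correct and matches what the paper intends: the corollary is presented, without separate proof, as an immediate reformulation of Corollary \ref{cor:rapid_decrease_tau_M} (itself established by the $t$-integration by parts scheme of Corollary \ref{cor:rapid_decrease_w} together with Proposition \ref{prop:estimate_distance}), and your expansion via the partition $1=\rho+(1-\rho)$, with the observation that the $(1-\rho)$ piece is supported where $\|\tau_M(m_x)\|\gtrsim\lambda^{\delta-1/2}$, is exactly the intended argument. Your further remark that the inserted factor is $t$-independent, so it cannot interfere with the $t$-integration by parts, is the only point that needed to be made explicit, and you make it correctly.
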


Let us set $\mathbf{s}_\tau=:\mathbf{s}_0+\tau$ for brevity. Then we get
 \begin{eqnarray}
 \label{eqn:fourier_transform_scaled cut off}
\lefteqn{\mathcal{S}_\chi(\lambda\,\beta,\mathbf{s}_0,y,y)}\\
&\sim&\lambda\,\int_{1/(2D)}^{2D}\int_{\mathfrak{t}}\,
e^{i\lambda \,\Psi_\beta(y,t,\mathbf{s}_\tau)}\,\rho\left(\lambda^{1/2-\delta}\,\tau_M(m_x)\right)\,\chi(\tau)\,
\varrho(t)\,
s(\lambda\,t,y_{\mathbf{s}_\tau},z)\,\mathrm{d}\tau\,\mathrm{d}t.\nonumber
\end{eqnarray}

However, the latter asymptotic equality does not yet allow us to reduce to the case
$\|\tau\|\le D\,\lambda^{\delta-1/2}$, because the evaluation map
$\tau\mapsto \tau_M(m_x)$ needn't be injective.

\subsubsection{Domain concentration in $\tau$ coming from $\mathrm{dist}_X(y_\mathbf{s},y)$ and $\partial_t\Psi_\beta$}

With $\mathbf{s}_\tau=\mathbf{s}_0+\tau$ as above, and a constant $D>0$,
let us set
$$
B_\lambda(y)=:\left\{\tau\in \mathfrak{t}:\,\mathrm{dist}_X(y_{\mathbf{s}_\tau},y)\ge D\,\lambda^{\delta-1/2}\right\}.
$$

The same argument used in the proof of Corollary \ref{cor:rapid_decrease_w} implies the following

\begin{cor}
\label{cor:A B lambda decomposition}
 The locus $B_\lambda(y)$ contributes negligibly to the asymptotics of 
 $\mathcal{S}_\chi(\lambda\,\beta,\mathbf{s}_0,y,y)$
\end{cor}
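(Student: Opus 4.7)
The plan is to mirror the integration-by-parts argument in $t$ from the proof of Corollary~\ref{cor:rapid_decrease_w}, with the role played there by $\|\mathbf{w}\|$ now taken over directly by $\mathrm{dist}_X(y_{\mathbf{s}_\tau},y)$. Concretely, I would insert into the integrand of (\ref{eqn:fourier_transform_scaled cut off}) a smooth cutoff $\eta_\lambda(\tau)$ which is identically $1$ on $B_\lambda(y)$ and supported in a slight enlargement, say $\{\tau\in\mathfrak{t}:\mathrm{dist}_X(y_{\mathbf{s}_\tau},y)\ge (D/2)\,\lambda^{\delta-1/2}\}$; the claim is that the resulting oscillatory integral is $O(\lambda^{-\infty})$.

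On the support of $\eta_\lambda$, the same estimate as in (\ref{eqn:t_derivative_bound}), based on Corollary~1.3 of \cite{bdm_sj}, yields
$$
\big|\partial_t \Psi_\beta(y,t,\mathbf{s}_\tau)\big| \;\ge\; \Im\psi(y_{\mathbf{s}_\tau},y) \;\ge\; D'\,\mathrm{dist}_X(y_{\mathbf{s}_\tau},y)^2 \;\ge\; c\,\lambda^{2\delta-1},
$$
for some constant $c>0$. Hence $|(\lambda\,\partial_t\Psi_\beta)^{-1}|=O(\lambda^{-2\delta})$ uniformly on the support of $\eta_\lambda$.

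Since $\Psi_\beta(y,t,\mathbf{s}_\tau) = t\,\psi(y_{\mathbf{s}_\tau},y) - \langle\beta,\mathbf{s}_\tau\rangle$ is affine in $t$, one has $\partial_t^k \Psi_\beta = 0$ for $k\ge 2$. Integration by parts in $t$ with the operator $L = (i\lambda\,\partial_t\Psi_\beta)^{-1}\partial_t$ thus produces, at each step, a factor of order $\lambda^{-2\delta}$; boundary terms vanish because of the cutoff $\varrho(t)$, and differentiation in $t$ of the amplitude $s(\lambda t,y_{\mathbf{s}_\tau},y)$ preserves its order $d$ in $\lambda$. The $\tau$-dependent factors $\chi(\tau)$, $\rho(\lambda^{1/2-\delta}\tau_M(m_x))$ and $\eta_\lambda(\tau)$ are not differentiated and appear only as bounded multipliers. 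Iterating $N$ times produces a bound of order $O(\lambda^{1+d-2N\delta})$, and letting $N\to\infty$ yields the desired rapid decay.

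The main point demanding attention is the presence of the $\lambda$-dependent cutoff $\eta_\lambda$, whose $\tau$-derivatives grow like $\lambda^{(1/2-\delta)k}$. However, since integration by parts is performed in $t$ and not in $\tau$, $\eta_\lambda$ is never differentiated and its $\lambda$-dependence causes no difficulty. The argument is therefore a direct transcription of the one for Corollary~\ref{cor:rapid_decrease_w}, with $\mathrm{dist}_X(y_{\mathbf{s}_\tau},y)^2 \gtrsim \lambda^{2\delta-1}$ replacing $\|\mathbf{w}\|^2\gtrsim \lambda^{2\delta-1}$ as the lower bound on $|\partial_t\Psi_\beta|$.
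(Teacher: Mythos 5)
Your proposal follows the same route the paper takes: the paper simply invokes "the same argument used in the proof of Corollary~\ref{cor:rapid_decrease_w}," which is precisely the chain $|\partial_t\Psi_\beta| = |\psi(y_{\mathbf{s}_\tau},y)| \ge \Im\psi(y_{\mathbf{s}_\tau},y) \ge D'\,\mathrm{dist}_X(y_{\mathbf{s}_\tau},y)^2 \ge c\,\lambda^{2\delta-1}$ on the relevant locus, followed by iterated integration by parts in $t$ (legitimate because of the cutoff $\varrho(t)$), each step gaining $\lambda^{-2\delta}$. Your additional observations — that $\Psi_\beta$ is affine in $t$, that the $\lambda$-dependent $\tau$-cutoff $\eta_\lambda$ is never differentiated, and that $t$-differentiation of $s(\lambda t,\cdot,\cdot)$ preserves the order $\lambda^d$ — are all correct and make explicit the details the paper leaves to the reader.
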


Using first Lemma 3.2 of \cite{pao_JMP} and then Corollary 2.2 of \cite{pao_IJM},
with the given choice of geodesic adapted coordinates centered at $m_x$ we get
\begin{eqnarray}
 \label{eqn:local_coordinate_action}
y_{\mathbf{s}_\tau}&=&\phi^X_{-\tau-\mathbf{s}_0}\Big(x+\big(\mathbf{w}+J_m\big(\xi_M(m_x)\big)\Big)\nonumber\\
&=&\phi^X_{-\tau}\circ \phi^X_{-\mathbf{s}_0}\Big(x+\big(\mathbf{w}+J_{m_x}\big(\xi_M(m_x)\big)\Big)\nonumber\\
&=&\phi^X_{-\tau}\Big(x+\Big(R_3(\mathbf{w},\xi), A\mathbf{w}+J_m\big(\xi_M(m_x)\Big)\Big)\nonumber\\
&=&x+\Big(\langle \Phi(m_x),\tau\rangle +\omega_{m_x}\Big(\tau_M(m_x), 
A\mathbf{w}+J_{m_x}\big(\xi_M(m_x)\Big)+R_3(\tau,\mathbf{w},\xi),\nonumber\\
&&A\mathbf{w}+J_{m_x}\big(\xi_M(m_x)\big)-\tau_M(m_x)+R_2(\tau,\mathbf{w},\xi)\Big)\nonumber\\
&=&x+\Big(\langle \Phi(m_x),\tau\rangle +g_{m_x}\big(\tau_M(m_x), 
\xi_M(m_x)\big)+R_3(\tau,\mathbf{w},\xi),\nonumber\\
&&A\mathbf{w}+J_m\big(\xi_M(m_x)\big)-\tau_M(m_x)+R_2(\tau,\mathbf{w},\xi)\Big),
\end{eqnarray}
where $A$ is again as in \S \ref{sctn:heisenberg lc}.
We have used that $\tau_M(m_x)$ and $A\mathbf{w}$ live in orthogonal complex subspaces (with respect to the
Hermitian structure of $T_{m_x}M$), and therefore are symplectically orthogonal as well.

Since HLC are isometric at the origin, perhaps after restricting the domain of definition we have 
\begin{eqnarray}
 \label{eqn:comparison_norm_HLC X}
 \lefteqn{2\,\big\|(\theta_1-\theta_2,\mathbf{v}_1-\mathbf{v}_2)\big\|\ge  
\mathrm{dist}_X\big(x+(\theta_1,\mathbf{v}_1),x+(\theta_2,\mathbf{v}_2)\big)}\nonumber\\
&\ge& \frac{1}{2}\,\big\|(\theta_1-\theta_2,\mathbf{v}_1-\mathbf{v}_2)\big\|=
\frac 12\,\sqrt{(\theta_1-\theta_2)^2+\|\mathbf{v}_1-\mathbf{v}_2\|^2}.
\end{eqnarray}
Thus we see from (\ref{eqn:local_coordinate_action}) that, with $y$
as in (\ref{eqn:y parametrized}),
\begin{eqnarray}
 \label{eqn:distance_comparison_2}
 \lefteqn{\mathrm{dist}_X(y_{\mathbf{s}_\tau},y)}\\
 &\ge&\frac 12\Big|\langle \Phi(m_x),\tau\rangle +g_{m_x}\big(\tau_M(m_x), 
\xi_M(m_x)\big)\Big|+R_3(\tau,\mathbf{w},\xi).\nonumber
\end{eqnarray}

By virtue of Corollary \ref{cor:A B lambda decomposition},
we obtain the following:
\begin{lem}
\label{lem:non trivial asymptotics}
Let $y=y(x,\mathbf{w},\xi)$ be as in (\ref{eqn:y parametrized}).
Given a constant $E>0$, 
the locus of those $\tau\in \mathfrak{t}$ such that
$$
\Big|\langle \Phi(m_x),\tau\rangle +g_{m_x}\big(\tau_M(m_x), 
\xi_M(m_x)\big)\Big|+R_3(\tau,\mathbf{w},\xi)\ge E\,\lambda^{\delta-1/2},
$$
contributes negligibly to the asymptotics of $\mathcal{S}_\chi(\lambda\,\beta,\mathbf{s}_0,y,y)$.
\end{lem}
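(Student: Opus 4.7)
My plan is to derive the lemma directly from Corollary \ref{cor:A B lambda decomposition}, by showing that the locus appearing in the hypothesis is, up to a constant rescaling of $E$, contained in the set $B_\lambda(y)$ where rapid decay of the oscillatory integral was already established.

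First I would go back to the HLC expansion (\ref{eqn:local_coordinate_action}), which expresses $y_{\mathbf{s}_\tau} = x + (\theta', \mathbf{v}')$ with vertical component
\[
\theta' = \langle \Phi(m_x), \tau\rangle + g_{m_x}\big(\tau_M(m_x), \xi_M(m_x)\big) + R_3(\tau, \mathbf{w}, \xi),
\]
while $y = x + (0, \mathbf{w} + J_{m_x}\xi_M(m_x))$ in the same HLC system. Since HLC are isometric to first order at the origin, the comparison (\ref{eqn:comparison_norm_HLC X}) gives
\[
\mathrm{dist}_X(y_{\mathbf{s}_\tau}, y) \ge \tfrac{1}{2}\,|\theta'|,
\]
which is the sharper form of (\ref{eqn:distance_comparison_2}) with $R_3$ absorbed inside the absolute value.

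Next I would absorb the $R_3$ appearing \emph{outside} the absolute value in the lemma's hypothesis. Under the reductions already in force — $\|\mathbf{w}\| \le D\lambda^{\delta-1/2}$ from (\ref{eqn:restriction in w}), $\|\tau_M(m_x)\| \le D\lambda^{\delta-1/2}$ from the rescaled cut-off (\ref{eqn:w_tau_M_reduction}), and $\|\tau\|, \|\xi\| = O(\epsilon)$ from the support of $\chi$ and the tubular localization of Lemma \ref{lem:double_localization} — the cubic remainder satisfies $|R_3(\tau, \mathbf{w}, \xi)| \le C\,\epsilon^2\,(\|\tau\| + \|\mathbf{w}\| + \|\xi\|)$. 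After possibly shrinking $\epsilon$, a triangle-inequality manipulation then shows that the inequality
\[
\big|\langle \Phi(m_x),\tau\rangle + g_{m_x}\big(\tau_M(m_x), \xi_M(m_x)\big)\big| + R_3(\tau, \mathbf{w}, \xi) \ge E\,\lambda^{\delta-1/2}
\]
forces $|\theta'| \ge (E/2)\lambda^{\delta-1/2}$, and hence $\mathrm{dist}_X(y_{\mathbf{s}_\tau}, y) \ge (E/4)\lambda^{\delta-1/2}$ for $\lambda$ sufficiently large.

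The conclusion then follows immediately: with $D = E/4$, the locus is contained in $B_\lambda(y)$, so by Corollary \ref{cor:A B lambda decomposition} its contribution to the oscillatory integral (\ref{eqn:fourier_transform_scaled cut off}) is $O(\lambda^{-\infty})$. The only mildly delicate bookkeeping point is the absorption of the cubic remainder $R_3$, which requires the combined use of the $\lambda^{\delta-1/2}$-smallness of $\mathbf{w}$ and $\tau_M(m_x)$ together with the $\epsilon$-smallness of $\tau$ and $\xi$; this is a routine estimate once those quantitative bounds are in hand, and does not require any new ingredient beyond what has already been developed in the preceding subsections.
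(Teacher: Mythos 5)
Your route is the same as the paper's: use the lower bound $\mathrm{dist}_X(y_{\mathbf{s}_\tau},y)\ge\tfrac12|\theta'|$ coming from the HLC expansion (\ref{eqn:local_coordinate_action}) and (\ref{eqn:comparison_norm_HLC X}), observe that the locus in the lemma is therefore contained (up to a constant) in $B_\lambda(y)$, and invoke Corollary \ref{cor:A B lambda decomposition}. That is precisely what the phrase ``By virtue of Corollary \ref{cor:A B lambda decomposition}, we obtain the following'' signals.

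However, the step you flag as ``mildly delicate'' --- the absorption of $R_3$ --- does not go through as you describe it, and this is a genuine gap in your write-up. You bound $|R_3(\tau,\mathbf{w},\xi)|\le C\,\epsilon^2(\|\tau\|+\|\mathbf{w}\|+\|\xi\|)$, which under the reductions in force yields only $|R_3|=O(\epsilon^3)$, because at this stage of the argument $\|\tau\|$ and $\|\xi\|$ are still only bounded by a fixed multiple of $\epsilon$ (the stronger bounds $\|\tau\|,\|\xi\|=O(\lambda^{\delta-1/2})$ are obtained \emph{after} this lemma, in Lemma \ref{lem:bound tau xi} and (\ref{eqn:reduction tau xi}), and cannot be used here without circularity). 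Since $\epsilon$ is fixed while $\lambda\to\infty$, $O(\epsilon^3)$ is \emph{much larger} than $E\,\lambda^{\delta-1/2}$, so no triangle-inequality manipulation based on this bound can force $|\theta'|\ge(E/2)\lambda^{\delta-1/2}$. Concretely, a hypothetical $\tau$ with $\langle\Phi(m_x),\tau\rangle+g_{m_x}(\tau_M,\xi_M)=0$ but $R_3\approx\epsilon^3$ satisfies your hypothesis yet need not lie in $B_\lambda(y)$.

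The correct reading --- and the reason the paper gives no proof beyond citing the corollary --- is that the $R_3$ appearing in the lemma's display is not an arbitrary cubic remainder but \emph{the same} one that appears in the vertical HLC coordinate $\theta'$ in (\ref{eqn:local_coordinate_action}) and in (\ref{eqn:distance_comparison_2}); the paper's stated convention that ``$R_j$ is allowed to vary from line to line'' is being used to move this term in and out of the absolute value (compare the display in the proof of Lemma \ref{lem:bound tau xi}, where the same expression reappears with $R_3$ \emph{inside} the bars). With that reading the quantity in the lemma is, up to the factor $\tfrac12$ and a harmless cubic correction, simply a lower bound for $2\,\mathrm{dist}_X(y_{\mathbf{s}_\tau},y)$, so the locus in question is contained in $B_\lambda(y)$ with $D=E/2$ and the conclusion is immediate from Corollary \ref{cor:A B lambda decomposition}. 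No separate ``absorption estimate'' is needed, and indeed --- as your attempt shows --- one is not available at this point in the argument.
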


We conclude that there is no loss of generality in further restricting integration in $\mathrm{d}\tau$ in
(\ref{eqn:fourier_transform_scaled cut off}) to the locus in $\mathrm{t}$ where
\begin{equation}
 \label{eqn:further tau reduction}
 \Big|\langle \Phi(m_x),\tau\rangle +g_{m_x}\big(\tau_M(m_x), 
\xi_M(m_x)\big)\Big|+R_3(\tau,\mathbf{w},\xi)< E\,\lambda^{\delta-1/2}.
\end{equation}

\noindent
This may be accomplished $\mathcal{C}^\infty$-wise by redefining $\rho$ if necessary
(but with same type of scaling), and will be
left implicit in the following.

\subsubsection{Domain concentration coming from $\partial_\tau\psi_\beta$}

Since $\Psi_\beta$ is complex valued, $\partial_\tau\Psi_\beta(y,t,\mathbf{s}_\tau)\in \mathfrak{t}^\vee\otimes \mathbb{C}$
(recall that $\mathbf{s}_\tau=\mathbf{s}_0+\tau$).
Let us now define 
\begin{eqnarray}
 \label{eqn:definition of A_lambda}
A'_\lambda(y)&=:&\left\{(t,\tau):\,\|\partial_\tau\Psi_\beta(y,t,\mathbf{s}_\tau)\|
<2 D\,\lambda^{\delta-1/2}\right\},\\
A''_\lambda(y)&=:&\left\{(t,\tau):\,\|\partial_\tau\Psi_\beta(y,t,\mathbf{s}_\tau)\|
>D\,\lambda^{\delta-1/2}\right\}.\nonumber
\end{eqnarray}

Then $\{A'_\lambda(y,\tau),A''_\lambda(y,\tau)\}$ is an open cover of $\mathfrak{t}$, and we may find a
partition of unity subordinate to it, $\{\varsigma_\lambda,1-\varsigma_\lambda\}$ of the form
$$
\varsigma_\lambda(t,\tau)=:\varsigma\left(\lambda^{1/2-\delta}\,\partial_\tau\Psi_\beta(y,t,\mathbf{s}_\tau)\right),
$$
for an appropriate bump function $\varsigma\in \mathcal{C}^\infty_0\left(\mathfrak{t}^\vee\otimes \mathbb{C}\right)$, 
supported in an open ball of radius $2D$ centered at the origin $\mathbf{0}\in \mathfrak{t}$, 
and identically equal to $1$ within distance $D$ from the origin.
We can then rewrite (\ref{eqn:fourier_transform_scaled cut off}) as 
\begin{equation}
 \label{eqn:cut off splits}
 \mathcal{S}_\chi(\lambda\,\beta,\mathbf{s}_0,y,y)=
 \mathcal{S}_\chi(\lambda\,\beta,\mathbf{s}_0,y,y)'+\mathcal{S}_\chi(\lambda\,\beta,\mathbf{s}_0,y,y)'',
\end{equation}
where  $\mathcal{S}_\chi(\lambda\,\beta,\mathbf{s}_0,y,y)'$ and $\mathcal{S}_\chi(\lambda\,\beta,\mathbf{s}_0,y,y)''$
are given by (\ref{eqn:fourier_transform_scaled cut off}), but
with the integrand multiplied by $\varsigma_\lambda(\tau)$ and $1-\varsigma_\lambda(\tau)$, respectively.

\begin{prop}
\label{prop:asymptotic decrease 2nd term}
$\mathcal{S}_\chi(\lambda\,\beta,\mathbf{s}_0,y,y)''=O\left(\lambda^{-\infty}\right)$
for $\lambda\rightarrow\infty$. 
\end{prop}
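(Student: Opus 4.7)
The key observation is that $(1-\varsigma_\lambda)$ is supported where $\|\partial_\tau\Psi_\beta(y,t,\mathbf{s}_\tau)\|>D\,\lambda^{\delta-1/2}$, so the phase $\Psi_\beta$ is quantitatively non-stationary in $\tau$ on the domain of integration. The plan is to obtain the claimed rapid decay of $\mathcal{S}_\chi(\lambda\beta,\mathbf{s}_0,y,y)''$ by iterated integration by parts in $\tau$, exploiting this non-stationarity in the usual complex-phase non-stationary phase manner.

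Concretely, on the support of $1-\varsigma_\lambda$ I introduce the first-order differential operator
$$
L=\frac{1}{i\lambda}\,\frac{\overline{\partial_\tau\Psi_\beta}}{\|\partial_\tau\Psi_\beta\|^2}\cdot\partial_\tau,
$$
where $\overline{\cdot}$ denotes componentwise complex conjugation and $\|\cdot\|^2$ the standard Hermitian squared norm on $\mathfrak{t}^\vee\otimes\mathbb{C}$. Since $\partial_\tau e^{i\lambda\Psi_\beta}=i\lambda(\partial_\tau\Psi_\beta)\,e^{i\lambda\Psi_\beta}$, by construction $L\,e^{i\lambda\Psi_\beta}=e^{i\lambda\Psi_\beta}$. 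After $N$-fold integration by parts in $\tau$ (all boundary terms vanish by compact support of $\chi$) the integrand defining $\mathcal{S}_\chi(\lambda\beta,\mathbf{s}_0,y,y)''$ becomes $e^{i\lambda\Psi_\beta}(L^t)^N(a_\lambda)$, where $a_\lambda$ is the full amplitude, i.e. the product of $(1-\varsigma_\lambda)$, $\rho(\lambda^{1/2-\delta}\tau_M(m_x))$, $\chi(\tau)\,\varrho(t)$ and the symbol $s(\lambda t,y_{\mathbf{s}_\tau},y)$.

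The main technical obstacle is to bound $(L^t)^N a_\lambda$ uniformly. I will use four estimates: (a) on $\mathrm{supp}(1-\varsigma_\lambda)$, $\|\partial_\tau\Psi_\beta\|^{-1}\leq D^{-1}\lambda^{1/2-\delta}$, and since the higher $\tau$-derivatives of $\Psi_\beta$ are uniformly bounded, each differentiation of the coefficient $\overline{\partial_\tau\Psi_\beta}/\|\partial_\tau\Psi_\beta\|^{2}$ costs at most an extra factor $O(\lambda^{1/2-\delta})$; (b) differentiating $\varsigma_\lambda(t,\tau)=\varsigma(\lambda^{1/2-\delta}\partial_\tau\Psi_\beta)$, or the cut-off $\rho(\lambda^{1/2-\delta}\tau_M(m_x))$, produces a factor $O(\lambda^{1/2-\delta})$; (c) the $\tau$-derivatives of $s(\lambda t,\cdot,\cdot)$ do not carry extra powers of $\lambda$, since the large parameter enters only through a bounded evaluation point, and $|s(\lambda t,y_{\mathbf{s}_\tau},y)|=O(\lambda^d)$; (d) $\chi(\tau)\varrho(t)$ contributes only constants. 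Combined with the explicit $\lambda^{-1}$ prefactor of $L$, each application of $L^t$ contributes a net gain of at least $\lambda^{-1}\cdot\lambda^{1/2-\delta}\cdot\lambda^{1/2-\delta}=\lambda^{-2\delta}$ in the worst case.

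Since the overall prefactor is $\lambda$ and the symbol is $O(\lambda^d)$, iterating this bound $N$ times yields
$$
\mathcal{S}_\chi(\lambda\beta,\mathbf{s}_0,y,y)''=O\!\left(\lambda^{d+1-2N\delta}\right);
$$
because $\delta>0$ and $N$ is arbitrary, this is $O(\lambda^{-\infty})$, which proves the proposition. The hard part of the write-up will be the combinatorial bookkeeping of the many terms produced by $(L^t)^N$, but no new analytic ingredients beyond the four estimates above are needed; in particular, none of the powers $\lambda^{1/2-\delta}$ incurred by differentiating cut-offs ever outweighs the $\lambda^{-1}$ gain from $L$.
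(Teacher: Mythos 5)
Your proof is correct and is essentially the argument in the paper: iterated non-stationary-phase integration by parts in $\tau$ using the first-order operator built from $\overline{\partial_\tau\Psi_\beta}/\|\partial_\tau\Psi_\beta\|^2$, with the observation that each application of the transpose yields a net factor $\lambda^{-1}\cdot\lambda^{1/2-\delta}\cdot\lambda^{1/2-\delta}=\lambda^{-2\delta}$ once the support constraint $\|\partial_\tau\Psi_\beta\|>D\lambda^{\delta-1/2}$ and the scalings of the cut-offs $\varsigma_\lambda,\rho$ are taken into account. The paper packages your estimates (a)--(d) into an inductive lemma on the structure of $(L^{\mathrm t})^k(\mathcal{A}_\lambda)$ (terms of the form $\mathcal{P}_a(V,\overline V)\,\|V\|^{-2b}\,\lambda^{c(1/2-\delta)}B_\lambda$ with $2b-a+c\le 2k$), which is the bookkeeping you defer; the bottom-line bound $O(\lambda^{d+1-2k\delta})$ is identical.
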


\begin{proof}[Proof of Proposition \ref{prop:asymptotic decrease 2nd term}]
Let us define
$$
Z(\partial_\tau\Psi_\beta,y)=:\big\{\tau\in \mathfrak{t}:
\partial_\tau\Psi_\beta(y,\mathbf{s}_\tau)=\mathbf{0}\big\}.
$$
Let $(X_j)$ be the standard linar coordinates on $\mathfrak{t}\cong \mathbb{R}^r$.
On $\mathfrak{t}\setminus Z(\partial_\tau\Psi_\beta,y)$, we may consider the differential operator
$$
L=:\frac{1}{\sum_{j=1}^r\big|\partial_{X_j}\Psi_\beta(y,\mathbf{s}_\tau)\big|^2}\,
\sum_j\partial_{X_j}\overline{\Psi_\beta(y,\mathbf{s}_\tau)}\,\partial_{X_j}.
$$
Then $L(\Psi_\beta)=1$, and so $L\left(e^{i\lambda\,\Psi_\beta}\right)=i\lambda\,e^{i\lambda\,\Psi_\beta}$.
Let us also define
$$
\rho_\lambda(\tau)=:\rho\left(\lambda^{1/2-\delta}\,\tau_M(m_x)\right)\,\big(1-\varsigma_\lambda(\tau)\big)
\,\,\,\,(\tau\in \mathfrak{t}),
$$  
$$
\mathcal{A}_\lambda(y,\tau,t)=:\rho_\lambda(\tau)\,
\varrho(t)\,\chi(\tau)\,
s(\lambda\,t,y_{\mathbf{s}_\tau},y).
$$
Then we obtain
\begin{eqnarray}
 \label{eqn:integrazione per parti}
\lefteqn{ \mathcal{S}_\chi(\lambda\,\beta,\mathbf{s}_0,y,y)''}\\
&\sim&\lambda\,\int_{1/(2D)}^{2D}\int_{\mathfrak{t}}\,
e^{i\lambda \,\Psi_\beta(y,t,\mathbf{s}_\tau)}\,\mathcal{A}_\lambda(y,\tau,t)\,\mathrm{d}\tau\,\mathrm{d}t\nonumber\\
&=&-i\,\int_{1/(2D)}^{2D}\int_{\mathfrak{t}}\,
L\left(e^{i\lambda \,\Psi_\beta(y,t,\mathbf{s}_\tau)}\right)\,\mathcal{A}_\lambda(y,\tau,t)\,\mathrm{d}\tau\,\mathrm{d}t\nonumber\\
&=&i\,\sum_j\int_{1/(2D)}^{2D}\int_{\mathfrak{t}}\,e^{i\lambda \,\Psi_\beta(y,t,\mathbf{s}_\tau)}
\partial_{X_j}\left(
\frac{\overline{\partial_{X_j}\Psi_\beta(y,\mathbf{s}_\tau)}}{\sum_l\big|\partial_{X_l}\Psi_\beta(y,\mathbf{s}_\tau)\big|^2}\,
\mathcal{A}_\lambda(y,\tau,t)\right)\,\mathrm{d}\tau\,\mathrm{d}t\nonumber\\
&=&-i\,\sum_j\int_{1/(2D)}^{2D}\int_{\mathfrak{t}}\,e^{i\lambda \,\Psi_\beta(y,t,\mathbf{s}_\tau)}
P\big(
\mathcal{A}_\lambda(y,\tau,t)\big)\,\mathrm{d}\tau\,\mathrm{d}t,\nonumber
\end{eqnarray}
where $P=L^{\mathrm{t}}$ is the transpose operator,
given by
$$
P(h)=:-\sum_{j=1}^r\partial_{X_j}
\left(\frac{\partial_{X_j}\overline{\Psi_\beta(y,\mathbf{s}_\tau)}}{\sum_l\big|\partial_{X_l}\Psi_\beta(y,\mathbf{s}_\tau)\big|^2}\,
\cdot h\right).
$$ 
\noindent
Using the asymptotic expansion of $s(\lambda\,t,z_{\mathbf{s}_0+\tau},z)$, one sees that the integrand on the last line
of (\ref{eqn:integrazione per parti}) is bounded by $C_k\,\lambda^{d+1-2\delta}$.

Iterating the integration by parts in $\tau$, as above, we obtain for any $k\ge 1$
\begin{eqnarray}
 \label{eqn:integrazione per parti k}
\lefteqn{ \mathcal{S}_\chi(\lambda\,\beta,\mathbf{s}_0,y,y)''}\\
&\sim&(-i)^k\,\lambda^{1-k}\int_{1/(2D)}^{2D}\int_{\mathfrak{t}}\,
L^k\left(e^{i\lambda \,\Psi_\beta(y,t,\mathbf{s}_\tau)}\right)\,\mathcal{A}_\lambda(y,\tau,t)\big)\,\mathrm{d}\tau\,\mathrm{d}t\nonumber\\
&=&(-i)^k\,\lambda^{1-k}\int_{1/(2D)}^{2D}\int_{\mathfrak{t}}\,
e^{i\lambda \,\Psi_\beta(y,t,\mathbf{s}_\tau)}\,P^k\big(
\mathcal{A}_\lambda(y,\tau,t)\big)\,\mathrm{d}\tau\,\mathrm{d}t\nonumber,\nonumber
\end{eqnarray}

One can then check inductively the following:

\begin{lem}
 \label{lem:general term bound}
Let us set $V_j=:\partial_{X_j}\Psi_\beta(y,\mathbf{s}_\tau)$, and $V=(V_j)$. 
Then, for any $k\ge 1$, $P^k\big(
\mathcal{A}_\lambda(\tau,t)\big)$ is a linear combination of terms of the form
$$
\frac{\mathcal{P}_a\left(V,\overline{V}\right)}{\|V\|^{2b}}\,\lambda^{c(1/2-\delta)}\,B_\lambda(\tau,t),
$$
where $\mathcal{P}_a$ is a homogeneous polynomial of degree $a$, with coefficients
depending on the derivatives of $V$, and $a,b,c\in \mathbb{N}$, $2b-a+c\le 2k$; also,
$|B_\lambda|
\le C'_{a,b,c}\,\lambda^d$ for $\lambda\gg 0$.
\end{lem}

\noindent
The bound on $B_\lambda$ follows from the asymptotic expansion for the amplitude $s$ of $\Pi$
in (\ref{eqn:szego_microlocal}).

On the other hand, in view of the definition of $A''_\lambda(y)$ in (\ref{eqn:definition of A_lambda}),
on the support of $1-\varsigma_\lambda(\tau)$
each summand in Lemma \ref{lem:general term bound} satisfies an estimate of the form
\begin{eqnarray*}
\lefteqn{\left|\frac{\mathcal{P}_a(V)}{\|V\|^{2b}}\,\lambda^{c(1/2-\delta)}\,B_\lambda(\tau,t)\right|
\le
C_{a,b,c}\,\frac{\|V\|^a}{\|V\|^{2b}}\,\lambda^{d+c(1/2-\delta)}}\\
&=&
C_{a,b,c}\,\frac{1}{\|V\|^{2b-a}}\,\lambda^{d+c(1/2-\delta)}\le
D_{a,b,c}\,\lambda^{d+k(1-2\delta)}
\end{eqnarray*}
as $\lambda\rightarrow+\infty$.
Inserting this in (\ref{eqn:integrazione per parti k}), we obtain an upper bound of
the form $C_k\,\lambda^{d+1-2k\delta}$. This completes the proof of the Proposition.
\end{proof}

We conclude from (\ref{eqn:cut off splits}) and Proposition \ref{prop:asymptotic decrease 2nd term}
that 
\begin{eqnarray}
 \label{eqn:cut off reduction}
 \mathcal{S}_\chi(\lambda\,\beta,\mathbf{s}_0,y,y)&\sim& 
 \mathcal{S}_\chi(\lambda\,\beta,\mathbf{s}_0,y,y)'\\
 &\sim&\lambda\,\int_{1/(2D)}^{2D}\int_{\mathfrak{t}}\,
e^{i\lambda \,\Psi_\beta(y,t,\mathbf{s}_\tau)}\,\mathcal{B}_\lambda(y,\tau,t)\,\mathrm{d}\tau\,\mathrm{d}t\nonumber
\end{eqnarray}
where now
\begin{equation}
 \label{eqn:definition of B_lambda}\mathcal{B}_\lambda(y,\tau,t)=:\rho\left(
\lambda^{1/2-\delta}\,
\tau_M(m_x)\right)\,\,\varsigma_\lambda(t,\tau)\,
\varrho(t)\,\chi(\tau)\,
s(\lambda\,t,y_{\mathbf{s}_\tau},y).
\end{equation}

The domain of integration in (\ref{eqn:cut off reduction}) is then $A'_\lambda(y)$.

\subsubsection{The reduction in $\tau$ and the bound in $\xi$}

We shall now combine (\ref{eqn:w_tau_M_reduction}), (\ref{eqn:further tau reduction})
and the domain reduction obtained in (\ref{eqn:cut off reduction}), 
always assuming (\ref{eqn:restriction in w}).

Since $\tau\mapsto \tau_X(x)$ is injective whenever $x\in X_\beta$,
we have $\|\tau_X(x)\|\ge a\,\|\tau\|$  for some
constant $a=a_\beta>0$, depending only on $\beta$.

On the other hand, in HLC centered at $x$ we have
$$\|\tau_X(x)\|=\Big\|\big(\langle\Phi(m_x),\tau\rangle,-\tau_M(m_x)\big)\Big\|.$$
On the domain of integration of (\ref{eqn:fourier_transform_scaled cut off}),
we are in the range (\ref{eqn:w_tau_M_reduction}); therefore,
\begin{equation}
 \label{eqn:estimate tau vertical}
 \big|\langle\Phi(m_x),\tau\rangle\big|\ge a\,\|\tau\| +O\left(\lambda^{\delta-1/2}\right).
\end{equation}

On the other hand, we are now assuming that
on the same domain (\ref{eqn:further tau reduction}) also holds;
combining (\ref{eqn:estimate tau vertical}) with (\ref{eqn:further tau reduction}), we conclude that
on the domain of integration of (\ref{eqn:fourier_transform_scaled cut off}), further reduced according to
(\ref{eqn:further tau reduction}),
we have for appropriate constants $D_1,\,D_1'>0$:
\begin{eqnarray*}
E\,\lambda^{\delta-1/2}&\ge& \left|\Big(\langle \Phi(m_x),\tau\rangle +g_m\big(\tau_M(m_x), 
\xi_M(m_x)\Big)+R_3(\tau,\mathbf{w},\xi)\right|\\
&\ge &D_1\,\|\tau\|+R_3(\tau,\mathbf{w},\xi)+O\left(\lambda^{\delta-1/2}\right)\\
&\ge&D_1'\,\|\tau\|+R_3(\xi)+O\left(\lambda^{\delta-1/2}\right).
\end{eqnarray*}
We have used that $\|\tau\|\gg R_j(\tau)$ for $j=2,3$ e $\|\tau\|<\epsilon$, $\epsilon$ small.
Therefore we obtain the following:
\begin{lem}
 \label{lem:bound tau xi}
 In the domain of integration of (\ref{eqn:fourier_transform_scaled cut off}), and with the
 reduction (\ref{eqn:further tau reduction}) implicit, for some constant $D_5>0$ we have
 \begin{equation*}
 \|\tau\|\le D_5\,\|\xi\|^3+O\left(\lambda^{\delta-1/2}\right).
\end{equation*}
\end{lem}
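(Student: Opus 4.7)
The plan is to combine three ingredients already in hand: the lower bound \eqref{eqn:estimate tau vertical} on $|\langle\Phi(m_x),\tau\rangle|$ (coming from local freeness on $X_\beta$ together with the scaled cutoff $\|\tau_M(m_x)\|=O(\lambda^{\delta-1/2})$), the upper bound \eqref{eqn:further tau reduction} on $|\langle\Phi(m_x),\tau\rangle+g_{m_x}(\tau_M(m_x),\xi_M(m_x))|$, and routine size estimates on the cross term $g_{m_x}(\tau_M(m_x),\xi_M(m_x))$ and on the remainder $R_3(\tau,\mathbf{w},\xi)$.

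First I would use the triangle inequality to write, from \eqref{eqn:further tau reduction},
\[
|\langle\Phi(m_x),\tau\rangle|\le E\,\lambda^{\delta-1/2}+|R_3(\tau,\mathbf{w},\xi)|+\big|g_{m_x}(\tau_M(m_x),\xi_M(m_x))\big|.
\]
The cross term is bounded by Cauchy--Schwarz: since $\|\tau_M(m_x)\|=O(\lambda^{\delta-1/2})$ and $\|\xi\|\le\epsilon$, we have $|g_{m_x}(\tau_M(m_x),\xi_M(m_x))|\le C\,\|\tau_M(m_x)\|\,\|\xi\|=O(\lambda^{\delta-1/2})$. Next I would split $R_3(\tau,\mathbf{w},\xi)$ by multi-linearity into pure and mixed monomials: terms carrying a factor of $\mathbf{w}$ are $O(\lambda^{3\delta-3/2})=o(\lambda^{\delta-1/2})$ (using $\delta<1/2$ and $\|\mathbf{w}\|=O(\lambda^{\delta-1/2})$); mixed terms involving $\tau$ or $\xi$ with $\mathbf{w}$ are controlled by the same $\lambda^{\delta-1/2}$ factor times bounded quantities; any pure $\tau$ cubic contributes at most $C\epsilon^2\|\tau\|$ thanks to $\|\tau\|\le\epsilon$; and the only remaining term is $O(\|\xi\|^3)$.

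Putting these together with \eqref{eqn:estimate tau vertical}, namely $|\langle\Phi(m_x),\tau\rangle|\ge a\,\|\tau\|-O(\lambda^{\delta-1/2})$, I get
\[
(a-C\epsilon^2)\,\|\tau\|\le C\,\|\xi\|^3+O(\lambda^{\delta-1/2}),
\]
and choosing $\epsilon$ small at the outset so that $a-C\epsilon^2>0$ and dividing yields the claimed estimate with $D_5=C/(a-C\epsilon^2)$.

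The step that requires the most care is the bookkeeping of the error terms inside $R_3(\tau,\mathbf{w},\xi)$: one must verify that every mixed monomial either absorbs into the linear-in-$\tau$ lower bound (via the smallness of $\epsilon$) or is of strictly smaller order than $\lambda^{\delta-1/2}$ (via the hypothesis $\delta<1/2$ and $\|\mathbf{w}\|=O(\lambda^{\delta-1/2})$), so that only the pure $\|\xi\|^3$ contribution survives on the right. Apart from this bookkeeping, no new analytical input is needed: the transversality used in \eqref{eqn:estimate tau vertical}, the reduction \eqref{eqn:further tau reduction}, and the cutoff \eqref{eqn:w_tau_M_reduction} have already been proved.
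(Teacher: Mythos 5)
Your argument reproduces the paper's proof: both combine the lower bound \eqref{eqn:estimate tau vertical} with the upper bound \eqref{eqn:further tau reduction}, estimate the cross term $g_{m_x}(\tau_M(m_x),\xi_M(m_x))$ by $O(\lambda^{\delta-1/2})$ via \eqref{eqn:w_tau_M_reduction}, and absorb the remainder $R_3(\tau,\mathbf{w},\xi)$ into either the linear $\|\tau\|$ term (for $\epsilon$ small) or the $O(\lambda^{\delta-1/2})$ error, leaving only the pure $\xi$-cubic. One small omission in your monomial bookkeeping: the $\mathbf{w}$-free mixed cubics of type $\tau\cdot\tau\cdot\xi$ and $\tau\cdot\xi\cdot\xi$ fall into none of your three bins as stated, but each carries a factor of $\tau$ and so is bounded by $C\epsilon^2\|\tau\|$ by exactly the same reasoning as the pure $\tau$-cubic, so the argument goes through unchanged.
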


On the other hand,
in view of (\ref{eqn:comparison_norm_HLC}),
given (\ref{eqn:restriction in w}) and (\ref{eqn:w_tau_M_reduction}) we have 
$$
\mathrm{dist}_M\big(\pi(y)_\mathbf{s},\pi(y)\big)\le 4D\,\lambda^{\delta-1/2}.
$$
Given that $\pi:X\rightarrow M$ is a Riemannian submersion with fibers the $S^1$-orbits in $X$,
there exists $\vartheta=\vartheta(y,\mathbf{s})\in (-\pi,\pi]$
such that
$$
\mathrm{dist}_X\left(y_\mathbf{s},e^{i\vartheta}\,y\right)\le 4D\,\lambda^{\delta-1/2}.
$$
In view of Remark \ref{rem:diff_psi}, identifying $\mathrm{d}\psi$ with its local coordinate expression,
\begin{eqnarray}
 \label{eqn:estimate_diff_psi}
\mathrm{d}_{(y_\mathbf{s},y)}\psi&=&\mathrm{d}_{(e^{i\vartheta}\,y,y)}\psi+O\left(\lambda^{\delta-1/2}\right)
\nonumber\\
&=&\left(e^{i\vartheta}\,\alpha_{e^{i\vartheta}\,y},\,-e^{i\vartheta}\,\alpha_{y}\right)+
O\left(\lambda^{\delta-1/2}\right)\nonumber\\
&=&\left(e^{i\vartheta}\,\alpha_{y},\,-e^{i\vartheta}\,\alpha_{y}\right)+
O\left(\lambda^{\delta-1/2}\right),
\end{eqnarray}
where on the last line we have used that $\alpha$ is $S^1$-invariant, and therefore it does not depend
on the $\theta$-coordinate in a HLC system (recall that in HLC the $S^1$-action on $X$ is expressed by a
translation in the angular coordinate).

Given $\xi\in \ker\Phi(m)$, let us introduce the linear functional on $\mathfrak{t}$
$$
L_m(\xi):\tau\mapsto g_m\big(\tau_M(m),\xi_M(m)\big).
$$
Given (\ref{eqn:estimate_diff_psi}) and (\ref{eqn:local_coordinate_action}), we see from
(\ref{eqn:phase_Psi_beta}) that
\begin{eqnarray*}
\partial_\tau\Psi_\beta(y,t,\mathbf{s}_\tau)&=&e^{i\vartheta}\,\Big(t\,\Phi(m_x)+L_m(\xi)\Big)-\beta+R_2(\tau,\xi)+
O\left(\lambda^{\delta-1/2}\right)\nonumber\\
&=&\left[e^{i\vartheta}\,t\,\Phi(m_x)-\beta\right]+ e^{i\vartheta}\, L_{m_x}(\xi)+
R_2(\tau,\xi)+O\left(\lambda^{\delta-1/2}\right).
\end{eqnarray*}

\begin{lem}
\label{lem:estimate tau xi}
In the range of the present discussion, 
 \begin{equation*}
\big\| \partial_\tau\Psi_\beta(y,t,\mathbf{s}_\tau)\big\|
\ge
b\,\|\xi\|+R_2(\tau)+
O\left(\lambda^{\delta-1/2}\right)
\end{equation*}
for some constant $b>0$.
\end{lem}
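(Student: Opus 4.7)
The plan is to extract the stated lower bound from the $\beta^\perp$-component of $\partial_\tau\Psi_\beta$ in $\mathfrak{t}^\vee\otimes\mathbb{C}$. Starting from the displayed expansion just above the statement, and using that $m_x\in M_\beta$ implies $\Phi(m_x)=p\,\beta$ with $p=\|\Phi(m_x)\|>0$, one rewrites
\begin{equation*}
\partial_\tau\Psi_\beta(y,t,\mathbf{s}_\tau)
=(e^{i\vartheta}tp-1)\,\beta+e^{i\vartheta}\,L_{m_x}(\xi)+R_2(\tau,\xi)+O\bigl(\lambda^{\delta-1/2}\bigr).
\end{equation*}
Decompose $L_{m_x}(\xi)=L_\parallel\,\beta+L_\perp$ with $L_\perp\in\beta^\perp\subseteq\mathfrak{t}^\vee$. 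The real-orthogonal projection $\pi_\perp$ onto $\beta^\perp\otimes\mathbb{C}$ kills the first summand, so
\begin{equation*}
\bigl\|\partial_\tau\Psi_\beta\bigr\|\ge\bigl\|\pi_\perp(\partial_\tau\Psi_\beta)\bigr\|\ge\|L_\perp\|-C\,|R_2(\tau,\xi)|-O\bigl(\lambda^{\delta-1/2}\bigr).
\end{equation*}

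The crux is the lower bound $\|L_\perp\|\ge b\,\|\xi\|$ for a constant $b>0$ uniform in $m\in M_\beta$ and $\xi\in\ker\Phi(m)$. I will check that the linear map $\Pi_m\colon\ker\Phi(m)\to\beta^\perp$, $\xi\mapsto L_m(\xi)_\perp$, is injective at each $m$; since both spaces have dimension $r-1$, this makes $\Pi_m$ an isomorphism, and then continuity in $m$ together with compactness of $M_\beta$ supplies a uniform $b>0$. For the injectivity, assume $L_m(\xi)=c\,\beta$ for some $c\in\mathbb{R}$ and $\xi\in\ker\Phi(m)$; pairing with $\xi$ itself gives
\begin{equation*}
\|\xi_M(m)\|^2=g_m\bigl(\xi_M(m),\xi_M(m)\bigr)=L_m(\xi)(\xi)=c\,\langle\beta,\xi\rangle=\frac{c}{\|\Phi(m)\|}\,\Phi^\xi(m)=0,
\end{equation*}
since $\xi\in\ker\Phi(m)$. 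Hence $\xi_M(m)=0$, and by Lemma \ref{lem:transversality_equivalent}(4) (the transversality assumption, reformulated as injectivity of $\mathrm{val}_m$ on $\ker\Phi(m)$), this forces $\xi=0$.

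To wrap up, the cross and pure-$\xi$ contributions inside $R_2(\tau,\xi)$ are $O(\|\xi\|\,\|\tau\|)+O(\|\xi\|^2)$, both of which are dominated by $(b/2)\,\|\xi\|$ once $\epsilon$ is small, and so can be absorbed by passing to $b/2$; what survives is a genuine $R_2(\tau)$ piece, yielding the stated inequality. The main obstacle I expect is exactly the nondegeneracy step: the example $T^2\curvearrowright\mathbb{C}^2$ with a non-diagonal action shows that $L_m$ in general does \emph{not} send $\ker\Phi(m)$ into $\beta^\perp$, so one cannot directly invoke injectivity of $\mathrm{val}_m$ for $L_m$ itself; the pairing identity above is what rescues the argument, exploiting precisely the vanishing $\Phi^\xi(m)=0$ on $\ker\Phi(m)$ to force $\xi_M(m)=0$.
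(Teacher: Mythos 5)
Your proof is correct and uses the same key insight as the paper's: the self-pairing identity $L_m(\xi)(\xi)=\|\xi_M(m)\|^2$ combined with the vanishing of $\Phi(m)$ on $\ker\Phi(m)$ and the transversality hypothesis (injectivity of $\mathrm{val}_m$ on $\ker\Phi(m)$). The paper packages this as the direct-sum decomposition $\mathcal{L}_m\oplus\mathrm{span}\{\Phi(m)\}$ in $\mathfrak{t}^\vee\otimes\mathbb{C}$, while you project onto $\beta^\perp\otimes\mathbb{C}$ first to kill the $t$-dependent term and then show $\pi_\perp\circ L_m$ is an isomorphism, but these are equivalent formulations of the same transversality fact, and the crux you flag — the pairing rescuing the argument precisely because $L_m$ need not map into $\beta^\perp$ — is exactly where the paper applies the same step.
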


\begin{proof}[Proof of Lemma \ref{lem:estimate tau xi}]
We have
$$
e^{i\vartheta}\,t\,\Phi(m)-\beta=
e^{i\vartheta}\,t\,\Phi(m)-\Phi_u(m)\in \mathrm{span}_{\mathbb{C}}\big\{\Phi(m)\big\}\subseteq 
\mathfrak{t}^\vee\otimes\mathbb{C},
$$
while every non-zero element of
$$
\mathcal{L}_m=:\Big\{L_m(\xi)\,:\,\xi\in \ker\Phi(m)\Big\}\otimes\mathbb{C}\subseteq \mathfrak{t}^\vee\otimes\mathbb{C}
$$
is non-vanishing on $\ker\Phi(m)$, as the evaluation map $\mathrm{val}_m:\mathfrak{t}\rightarrow T_mM$
is injective on $\ker\Phi(m)$; in particular $L_m(\xi)(\xi)=\|\xi_M(m)\|^2>0$ for any $\xi\in \ker\Phi(m)$, $\xi\neq 0$.
Hence $\xi\in \ker\Phi(m)\mapsto L_m(\xi)\in \mathcal{L}_m$ is an isomorphism, and
$$\mathcal{L}_m\cap \mathrm{span}\{\Phi(m)\}=(\mathbf{0});
$$
this implies for some constants $a_1,\,a_2>0$ and every $\xi,\,t$
$$
\left\|L_m(\xi)+\Big(e^{i\vartheta}\,t\,\Phi(m)-\beta\Big)\right\|\ge 
a_1\,\left(\left\|L_m(\xi)\right\|+\Big\|e^{i\vartheta}\,t\,\Phi(m)-\beta\Big\|\right)
\ge a_2\,\|\xi\|.
$$
To complete the proof, we need only remark that 
$\|\xi\|\gg R_2(\xi),\,R_1(\tau)\,R_1(\xi)$, since $\tau$ and $\xi$ are bounded linearly in $\epsilon$,
and $\epsilon$ is assumed very small.
\end{proof}

On the domain of integration $A'_\lambda(y)$, we then obtain
\begin{equation}
 \label{eqn:2nd bound}
\|\xi\|+R_2(\tau)=O\left(\lambda^{\delta-1/2}\right)\,\Longrightarrow\,\|\xi\|\le A\,\|\tau\|^2
+O\left(\lambda^{\delta-1/2}\right).
\end{equation}

Pairing (\ref{eqn:2nd bound}) with the bound in Lemma \ref{lem:bound tau xi}, we obtain first that
that in the domain of integration of (\ref{eqn:cut off reduction}) we have
\begin{equation}
 \label{eqn:reduction tau xi}
 \|\tau\|\le C'\,\lambda^{\delta-1/2},\,\,\,\,\,\,\|\xi\|\le C''\,\lambda^{\delta-1/2}
\end{equation}
for appropriate constants $C',\,C''>0$.

\subsubsection{Proof of Theorem \ref{thm:main rapid decrease}}

Summing up, we have established that for every $\delta\in (0,1/2)$ and any given positive constant
$a_\delta>0$ there exists $b_\delta>0$ such that
$\mathcal{S}_\chi(\lambda\,\beta,\mathbf{s}_0,y,y)=O\left(\lambda^{-\infty}\right)$ 
with $y=y(x,\mathbf{w},\xi)$ as in (\ref{eqn:y parametrized}),
if 
$\|\mathbf{w}\|\ge a_\delta\,\lambda^{\delta-1/2}$ or $\|\xi\|\ge b_\delta\,\lambda^{\delta-1/2}$
for $\lambda\gg 0$.
Let us now choose an arbitrary constant $a>0$ and suppose that 
$\max\big\{\|\mathbf{w}\|,\,\|\xi\|\big\}\ge a\,\lambda^{\delta-1/2}$.
Choose $\delta'\in (0,\delta)$. Then for $\lambda\gg 0$ we have
$$\max\big\{\|\mathbf{w}\|,\,\|\xi\|\big\}\ge a\,\lambda^{\delta-1/2}> 
\max\{a_{\delta'},b_{\delta'}\}\,\lambda^{\delta'-1/2}.$$
We conclude the following:

\begin{cor}
 \label{cor:equivalent rapid decay}
 For any positive constant $a>0$, we have
 $\mathcal{S}_\chi(\lambda\,\beta,\mathbf{s}_0,y,y)=O\left(\lambda^{-\infty}\right)$ 
with $y=y(x,\mathbf{w},\xi)$,
uniformly in $(\mathbf{w},\xi)$ satisfying 
$\max\big\{\|\mathbf{w}\|,\,\|\xi\|\big\}\ge a\,\lambda^{\delta-1/2}$
for $\lambda\gg 0$.
\end{cor}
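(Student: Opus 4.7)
\textbf{Proof proposal for Corollary \ref{cor:equivalent rapid decay}.}
The plan is essentially to combine the rapid-decay statements already established in Corollaries \ref{cor:rapid_decrease_w}, \ref{cor:rapid_decrease_tau_M}, and the $\tau$-$\xi$ bounds (\ref{eqn:reduction tau xi}), which together yield the following intermediate result: for every $\delta\in (0,1/2)$ there exist positive constants $a_\delta,b_\delta>0$ such that $\mathcal{S}_\chi(\lambda\,\beta,\mathbf{s}_0,y,y)=O(\lambda^{-\infty})$ whenever either $\|\mathbf{w}\|\ge a_\delta\,\lambda^{\delta-1/2}$ or $\|\xi\|\ge b_\delta\,\lambda^{\delta-1/2}$. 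The issue is that these constants $a_\delta,b_\delta$ are not arbitrary; they come out of the various integration-by-parts arguments and are fixed once $\delta$ is fixed. The statement of the Corollary, by contrast, asserts rapid decay under the single bound $\max\{\|\mathbf{w}\|,\|\xi\|\}\ge a\,\lambda^{\delta-1/2}$ for a user-chosen $a>0$ (possibly much smaller than $a_\delta$ or $b_\delta$).

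The idea to bridge this gap is a standard exponent trade-off: given $\delta\in (0,1/2)$ and the arbitrary $a>0$, pick any $\delta'\in (0,\delta)$ and apply the intermediate result with $\delta'$ in place of $\delta$ to obtain the corresponding constants $a_{\delta'},b_{\delta'}>0$. Since $\delta>\delta'$, we have
$$
\frac{\lambda^{\delta-1/2}}{\lambda^{\delta'-1/2}}=\lambda^{\delta-\delta'}\longrightarrow +\infty
\quad (\lambda\to +\infty),
$$
so there exists $\lambda_0=\lambda_0(a,a_{\delta'},b_{\delta'})$ such that $a\,\lambda^{\delta-\delta'}\ge \max\{a_{\delta'},b_{\delta'}\}$ for all $\lambda\ge\lambda_0$. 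Consequently, for $\lambda\ge\lambda_0$, the hypothesis $\max\{\|\mathbf{w}\|,\|\xi\|\}\ge a\,\lambda^{\delta-1/2}$ forces $\max\{\|\mathbf{w}\|,\|\xi\|\}\ge \max\{a_{\delta'},b_{\delta'}\}\,\lambda^{\delta'-1/2}$, which triggers the intermediate result and yields $\mathcal{S}_\chi(\lambda\,\beta,\mathbf{s}_0,y,y)=O(\lambda^{-\infty})$, uniformly in $(x,\mathbf{w},\xi)$ in the claimed range.

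There is no substantive obstacle here: all the hard analytic work (the stationary/non-stationary phase estimates in $t$, $\mathbf{s}$, and $\tau$, and the geometric transversality bounds linking $\|\tau\|$, $\|\xi\|$ and $\|\mathbf{w}\|$ via Lemmas \ref{lem:bound tau xi} and \ref{lem:estimate tau xi}) has already been carried out in the preceding subsections. The Corollary is simply the formal repackaging of those estimates into the exponent-uniform form required for the statement of Theorem \ref{thm:main rapid decrease}, and the above exponent trade-off is precisely what converts the $\delta$-dependent thresholds $a_\delta,b_\delta$ into the arbitrary constant $a$.
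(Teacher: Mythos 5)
Your argument is correct and is essentially identical to the paper's: the paper likewise states the intermediate result that for each $\delta\in(0,1/2)$ one has rapid decay when $\|\mathbf{w}\|\ge a_\delta\,\lambda^{\delta-1/2}$ or $\|\xi\|\ge b_\delta\,\lambda^{\delta-1/2}$ (with $b_\delta$ depending on $\delta$), and then converts this into the exponent-uniform form by choosing an auxiliary $\delta'\in(0,\delta)$ and observing that $a\,\lambda^{\delta-1/2}>\max\{a_{\delta'},b_{\delta'}\}\,\lambda^{\delta'-1/2}$ for $\lambda\gg 0$. The only cosmetic difference is that the paper phrases the intermediate result with $a_\delta$ arbitrary and $b_\delta$ then determined, while you assert mere existence of both; either formulation suffices for the trade-off.
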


\begin{proof}[Proof of Theorem \ref{thm:main rapid decrease}]
If (\ref{eqn:assumption lower bound distance}) holds with $y=y(x,\mathbf{w},\xi)$,
then in view of (\ref{eqn:comparison_norm_HLC X}) we need to have
\begin{eqnarray*}
 \max\{\|\mathbf{w}\|,\|\xi\|\}&\ge&\frac{1}{\sqrt{2}}\,\sqrt{\|\mathbf{w}\|^2+\|\xi\|^2}\ge
 \frac{1}{2\sqrt{2}}\,\mathrm{dist}_X(y,x)\ge \frac{D}{2\sqrt{2}}\,\lambda^{\delta-1/2}.
\end{eqnarray*}
Thus the statement of the Theorem follows from Corollary \ref{cor:equivalent rapid decay}.

\end{proof}


\section{Proof of Theorem \ref{thm:main_directional_trace} and Corollary \ref{cor:fourier transform}}

Before delving into the proof, let us note 
that in the course of the proof of Theorem \ref{thm:main rapid decrease}
 we have established the following: in (\ref{eqn:cut off reduction}) only a negligible contribution to
the asymptotics is lost, if integration in $\tau$ is restricted to a neighborhood of origin of
radius $C'\,\lambda^{\delta-1/2}$, for some $C'>0$
(see (\ref{eqn:reduction tau xi})). Hence we may rewrite (\ref{eqn:cut off reduction}) as follows:
\begin{eqnarray}
 \label{eqn:cut off reduction Clambda}
 \mathcal{S}_\chi(\lambda\,\beta,\mathbf{s}_0,y,y)&\sim& 
 \lambda\,\int_{1/(2D)}^{2D}\int_{\mathfrak{t}}\,
e^{i\lambda \,\Psi_\beta(y,t,\mathbf{s}_\tau)}\,\mathcal{C}_\lambda(y,\tau,t)\,\mathrm{d}\tau\,\mathrm{d}t\nonumber
\end{eqnarray}
where now
\begin{equation}
 \label{eqn:definition of C_lambda}
\mathcal{C}_\lambda(y,\tau,t)=:\gamma\left(
\lambda^{1/2-\delta}\,
\tau\right)\,
\varrho(t)\,\chi(\tau)\,
s(\lambda\,t,y_{\mathbf{s}_\tau},y),
\end{equation}
with $\gamma\in \mathcal{C}^\infty_0(\mathfrak{t})$ compactly supported and identically equal to one on an appropriate neighborhood
of the origin.
In view of (\ref{eqn:phase_Psi_beta}), we can further rewrite (\ref{eqn:cut off reduction Clambda}) as follows:
\begin{eqnarray}
 \label{eqn:fourier_transform_4}
\lefteqn{\mathcal{S}_\chi(\lambda\,\beta,\mathbf{s}_0,y,y)}\\
&\sim&\lambda\,e^{-i\,\lambda\,\langle\beta,\mathbf{s}_0\rangle}\int_{1/(2D)}^{2D}\int_{\mathfrak{t}}\,
e^{i\lambda \,\Upsilon_\beta(y,t,\tau)}\,\mathcal{C}_\lambda(y,\tau,t)\,\mathrm{d}\tau\,\mathrm{d}t,\nonumber
\end{eqnarray}
where
\begin{equation}
 \label{eqn:fase_Upsilon}
 \Upsilon_\beta(y,t,\tau)=:t\,\psi (y_{\mathbf{s}_\tau},y)-\langle\beta,\tau\rangle;
\end{equation}
here $y_{\mathbf{s}_\tau}$ is given by (\ref{eqn:local_coordinate_action}).

\begin{proof}[Proof of Theorem \ref{thm:main_directional_trace}]
Let us set, in the notation of \S \ref{sctn:heisenberg lc} and with $\mathbf{v}$ as in (\ref{eqn:vtauxi}), 
\begin{equation}
 \label{defn:definition of sylambda}
\mathbf{s}_\lambda=:\mathbf{s}_0+\frac{1}{\sqrt{\lambda}}\,\tau\in \mathbb{R}^r, \,\,\,
\,y_\lambda=x+\frac{1}{\sqrt{\lambda}}\,\mathbf{v}\in X,\,\,\,\,
y_{\lambda,\mathbf{s}_\lambda}=:\phi^X_{-\mathbf{s}_\lambda}(y_\lambda)\in X,
\end{equation}
\begin{equation}
 \label{eqn:definition of mlambda}
m_\lambda=:m_x+\frac{1}{\sqrt{\lambda}}\,\mathbf{v}=\pi(y_\lambda)\in M,\,\,\,\,
m_{\lambda,\mathbf{s}_\lambda}=:\phi^M_{-\mathbf{s}_\lambda}(m_\lambda)=\pi(y_{\lambda,\mathbf{s}_\lambda})\in M.
\end{equation}

With the change of integration variable $\tau\mapsto \tau/\sqrt{\lambda}$,
(\ref{eqn:fourier_transform_4}) may be rewritten
\begin{eqnarray}
 \label{eqn:fourier_transform_5}
\lefteqn{\mathcal{S}_\chi(\lambda\,\beta,\mathbf{s}_0,y_\lambda,y_\lambda)}\\
&\sim&\lambda^{1-\frac{r}{2}}\,e^{-i\,\lambda\,\langle\beta,\mathbf{s}_0\rangle}\int_{1/(2D)}^{2D}\int_{\mathfrak{t}}\,
e^{i\lambda \,\Upsilon_\beta(y_\lambda,t,\tau/\sqrt{\lambda})}\,\mathcal{D}_\lambda\left(y,\tau,t\right)
\,\mathrm{d}\tau\,\mathrm{d}t,\nonumber
\end{eqnarray}
where
\begin{eqnarray}
 \label{eqn:rescaled amplitude}
\mathcal{D}_\lambda\left(y,\tau,t\right)&=:&\mathcal{C}_\lambda\left(y_\lambda,\frac{1}{\sqrt{\lambda}}\,\tau,t\right)
\nonumber\\
&=&\gamma\left(
\lambda^{-\delta}\,
\tau\right)\,
\varrho(t)\,\chi(\lambda^{-1/2}\,\tau)\,
s(\lambda\,t,y_{\lambda,\mathbf{s}_\lambda},y_\lambda),
\end{eqnarray}
and integration in $\mathrm{d}\tau$ is now over an expanding ball in $\mathfrak{t}$ centered at the origin and radius
$O\left(\lambda^\delta\right)$.

Let us compute the phase in (\ref{eqn:fourier_transform_5}). We have by (\ref{eqn:fase_Upsilon})
\begin{eqnarray}
 \label{eqn:phase Upsilon}
 \Upsilon_\beta\left(y_\lambda,t,\frac{\tau}{\sqrt{\lambda}}\right)=
t\,\psi \left(y_{\lambda,\mathbf{s}_\lambda},y_\lambda\right)-\frac{1}{\sqrt{\lambda}}\,\langle\beta,\tau\rangle. 
\end{eqnarray}
Now let us apply (\ref{eqn:local_coordinate_action}) with $\tau$, $\mathbf{w}$, and $\xi$
rescaled by $1/\sqrt{\lambda}$. We obtain
\begin{eqnarray}
 \label{eqn:y lambda dinamico}
 \lefteqn{y_{\lambda,\mathbf{s}_\lambda}=\phi^X_{-\frac{\tau}{\sqrt{\lambda}}-\mathbf{s}_0}(y_\lambda)}\\
 &=&
x+\left(\frac{1}{\sqrt{\lambda}}\,\langle \Phi(m_x),\tau\rangle +\frac{1}{\lambda}\, g_{m_x}\big(\tau_M(m_x), 
\xi_M(m_x)\big)+R_3\left(\frac{\tau}{\sqrt{\lambda}},\frac{\mathbf{w}}{\sqrt{\lambda}},\frac{\xi}{\sqrt{\lambda}}\right),
\right.\nonumber\\
&&\left. \frac{1}{\sqrt{\lambda}}\,\big( A\mathbf{w}+J_{m_x}\big(\xi_M(m_x)\big)-\tau_M(m_x)\big)
+R_2\left(\frac{\tau}{\sqrt{\lambda}},\frac{\mathbf{w}}{\sqrt{\lambda}},\frac{\xi}{\sqrt{\lambda}}\right)\right)\nonumber\\
&=&x+\big(\varTheta_\lambda ,V_\lambda\big),\nonumber
\end{eqnarray}
where $\varTheta_\lambda=\varTheta_\lambda (x,\mathbf{w},\xi,\tau)$ and $V_\lambda=V_\lambda(x,\mathbf{w},\xi,\tau)$
are defined by the previous equality.
In view of Remark \ref{rem:espansione per psi}, (\ref{eqn:y lambda dinamico}) implies 
\begin{eqnarray}
 \label{eqn:espansione per psi}
 \lefteqn{t\,\psi \left(y_{\lambda,\mathbf{s}_\lambda},y_\lambda\right)=
 t\,\psi \left(x+\big(\varTheta_\lambda ,V_\lambda\big),x+\frac{1}{\sqrt{\lambda}}\,
 \big[\mathbf{w}+J_{m_x}\big(\xi_M(m_x)\big)\big]\right)}\\
 &=&it\,\left[1-e^{i\varTheta_\lambda}\right]-it\,\psi_2\left(V_\lambda,\frac{1}{\sqrt{\lambda}}\,
 \big[\mathbf{w}+J_{m_x}\big(\xi_M(m_x)\big)\big]\right)\,e^{i\varTheta_\lambda}\nonumber\\
 &&+t\,R_3\left(V_\lambda,\frac{1}{\sqrt{\lambda}}\,
 \big[\mathbf{w}+J_{m_x}\big(\xi_M(m_x)\big)\big]\right)\,e^{i\varTheta_\lambda}\nonumber\\
&=&it\,\left[1-e^{i\varTheta_\lambda}\right]-\frac{it}{\lambda}\,\psi_2\Big( A\mathbf{w}+J_m\big(\xi_M(m_x)\big)-\tau_M(m_x),
\mathbf{w}+J_{m_x}\big(\xi_M(m_x)\big)\Big)\,e^{i\varTheta}\nonumber\\
&&+t\,R_3\left(\frac{\tau}{\sqrt{\lambda}},\frac{\mathbf{w}}{\sqrt{\lambda}},\frac{\xi}{\sqrt{\lambda}}\right)
\,e^{i\varTheta_\lambda}
\nonumber
\end{eqnarray}
(recall that $R_3$ is a generic $\mathcal{C}^\infty$ function vanishing to third order at the origin,
and is allowed to vary from line to line). 
Inserting (\ref{eqn:espansione per psi}) 
in (\ref{eqn:phase Upsilon}), we get with some computations
\begin{eqnarray}
 \label{eqn:phase Upsilon expanded}
\lefteqn{i\lambda\,\Upsilon_\beta\left(y_\lambda,t,\frac{\tau}{\sqrt{\lambda}}\right)}\\
&=&i\,\sqrt{\lambda}\,\big\langle t\,\Phi(m_x)-\beta,\tau\big\rangle
+t\,\left[i\,g_{m_x}\big(\xi_M(m_x),\tau_M(m_x)\big)-\frac{1}{2}\,\langle\Phi(m_x),\tau\rangle ^2\right.\nonumber\\
&&\left.+\psi_2\Big(J\big(\xi_M(m_x)\big)+A\,\mathbf{w}-\tau_M(m_x),J\big(\xi_M(m_x)\big)+\mathbf{w}\Big)\right]
\nonumber\\
&&+\lambda\,t\,R_3\left(\frac{\tau}{\sqrt{\lambda}},\frac{\mathbf{w}}{\sqrt{\lambda}},\frac{\xi}{\sqrt{\lambda}}\right).\nonumber
\end{eqnarray}

We have
\begin{eqnarray*}
\lefteqn{\psi_2\Big(J\big(\xi_M(m_x)\big)+A\,\mathbf{w}-\tau_M(m_x),J\big(\xi_M(m_x)\big)+\mathbf{w}\Big)}\\
&=&\psi_2(A\,\mathbf{w},\mathbf{w})+i\,g_{m_x}\big(\xi_M(m_x),\tau_M(m_x)\big)-\frac{1}{2}\,\big\|\tau_M(m_x)\big\|^2.
\end{eqnarray*}

In particular, on the domain of integration and in the range of the Theorem
we have for some $c>0$
\begin{eqnarray*}
 \lefteqn{\Re\Big(i\lambda\,\Upsilon_\beta\left(z_\lambda,t,\tau/\sqrt{\lambda}\right)\Big)\le -\frac{t}{2}\,\|A\mathbf{w}-\mathbf{w}\|^2}\\
 &&-\frac{t}{2}\,\left(\langle\Phi(m_x),\tau\rangle ^2+\big\|\tau_M(m_x)\big\|^2\right)
 +\Re\left(\lambda\,R_3\left(\frac{\tau}{\sqrt{\lambda}},
 \frac{\mathbf{w}}{\sqrt{\lambda}},\frac{\xi}{\sqrt{\lambda}}\right)\right)\\
 &\le& -c\,\left(\|\mathbf{w}\|^2+\|\tau\|^2\right)
 +O\left(\lambda^{3\delta-1/2}\right)=-c\,\left(\|\mathbf{w}\|^2+\|\tau\|^2\right)
 +o(1)
\end{eqnarray*}
since by assumption $0<\delta<1/6$.

Let $\Xi(m)\in \mathfrak{t}$ be as in \S \ref{sctn:intrinsic vector field}, so that we have
an orthogonal direct sum
\begin{equation}
 \label{eqn:somma diretta ortogonale}
 \mathfrak{t}=\mathrm{span}\big(\Xi(m_x)\big)\oplus \ker\big(\Phi(m_x)\big).
\end{equation}
For any $\tau\in \mathfrak{t}$, 
we can write, for unique
$u\in \mathbb{R}$ and $\eta\in \ker\Phi(m_x)$,
\begin{equation}
 \label{eqn:explicit decomposition}
 \tau = \tau(u,\eta)=:u \,\Xi(m_x)+\eta.
\end{equation}

Recalling that $\beta=\Phi_u(m)=\Phi(m)/\|\Phi(m)\|$, 
(\ref{eqn:phase Upsilon expanded}) may be rewritten as follows:
\begin{eqnarray}
 \label{eqn:phase Upsilon compact}
\lefteqn{i\lambda\,\Upsilon_\beta\left(z_\lambda,t,\tau/\sqrt{\lambda}\right)}\\
&=&i\,\sqrt{\lambda}\,\Gamma_x(t,u)
+t\,\mathcal{E}_x\big(\xi,\tau(u,\eta),\mathbf{w}\big)
+\lambda\,t\,R_3\left(\frac{\tau}{\sqrt{\lambda}},\frac{\mathbf{w}}{\sqrt{\lambda}},\frac{\xi}{\sqrt{\lambda}}\right),
\nonumber
\end{eqnarray}
where
\begin{eqnarray}
 \label{eqn:fse Gamma e ampiezza A def}
 \Gamma_x(t,u)&=:&\big(\|\Phi(m_x)\|\,t-1\big)\,u,\\
 \mathcal{E}_x\big(\xi,\tau(u,\eta),\mathbf{w}\big)&=:&\psi_2(A\,\mathbf{w},\mathbf{w})-\frac{1}{2}\,\|\Phi(m_x)\|^2\,u^2
 \nonumber\\
 &&+2i\,g_{m_x}\Big(\xi_M(m_x),\tau(u,\eta)_M(m_x)\Big)-\frac{1}{2}\,\big\|\tau(u,\eta)_M(m_x)\big\|^2.\nonumber
\end{eqnarray}
In view of (\ref{eqn:somma diretta ortogonale}) and (\ref{eqn:explicit decomposition}), 
we can write the integral over $\mathfrak{t}$ as an iterated
integral:
$$
\int_{\mathfrak{t}}\,\mathrm{d}\tau=
\int_{\ker\Phi(m_x)}\mathrm{d}\eta\,\int_{-\infty}^{+\infty}\mathrm{d}u.
$$
We shall then rewrite (\ref{eqn:fourier_transform_5}) in the following form:
\begin{eqnarray}
 \label{eqn:fourier_transform_6}
\mathcal{S}_\chi(\lambda\,\beta,\mathbf{s}_0,y_\lambda,y_\lambda)\sim
\lambda^{1-\frac{r}{2}}\,e^{-i\,\lambda\,\langle\beta,\mathbf{s}_0\rangle}\,\int_{\ker\Phi(m_x)}\,
I_\lambda(x,\eta,\mathbf{w},\xi)\,\mathrm{d}\eta,
\end{eqnarray}
where the inner integral 
\begin{eqnarray}
\label{eqn:integrale interno}
\lefteqn{I_\lambda(x,\eta,\mathbf{w},\xi)}\\
&=:&
\int_{1/(2D)}^{2D}\,\int_{-\infty}^{+\infty}\,
e^{i\sqrt{\lambda}\,\Gamma(t,u)}\,e^{t\,\mathcal{E}}\,
e^{t\,\lambda R_3}\cdot 
\mathcal{D}_\lambda\left(y,\tau,t\right)\,\mathrm{d}u\,\mathrm{dt}\nonumber
\end{eqnarray}
is an oscillatory integral in $\sqrt{\lambda}$, with the quadratic phase $\Gamma$ and an amplitude
compactly supported in an expanding ball of center the origin and radius $O\left(\lambda^{\delta}\right)$.

Combining the asymptotic expansion of the symbol $s$ in (\ref{eqn:szego_microlocal}), 
which yields
$$
s\left(\lambda\,t,x',x''\right)\sim\lambda^d\,t^d\,\sum_{j\ge 0}\lambda^{-j}\,t^{-j}\,s_j\left(x',x''\right),
$$ 
with the 
Taylor expansion of the individual factors in the amplitude of (\ref{eqn:integrale interno}) 
in the rescaled variables,
we get an asymptotic expansion of the integrand in (\ref{eqn:integrale interno}) in descending powers
of $\lambda^{-1/2}$:
\begin{eqnarray}
 \label{eqn:espansione asintotica integranda}
 \lefteqn{e^{i\sqrt{\lambda}\,\Gamma_x(t,u)}\,e^{t\,\mathcal{E}_x}\,
e^{t\,R_3}\cdot 
\mathcal{D}_\lambda\left(y,\tau,t\right)}\\
&\sim&
e^{i\sqrt{\lambda}\,\Gamma(t,u)}\,e^{t\,\mathcal{E}}\cdot\lambda^d\,
\sum_{j,l\ge 0}\lambda^{-j-l/2}\,t^{d-j}\,\mathcal{P}_{j,l}(\xi,\mathbf{w},u,\eta),\nonumber
\end{eqnarray}
where $\mathcal{P}_{j,l}$ is a homogeneous
polynomial of degree $\le 3l$ (dependence on $x$ is omitted). 
Indeed, $\lambda^\delta$ appears in (\ref{eqn:rescaled amplitude}) only in the rescaling of the bump function
$\gamma$, which is identically equal to one on a neighborhood of the origin (and thus has vanishing derivatives
to all orders at the origin). 
\footnote{The degree of $\mathcal{P}_{j,l}$ is bounded by $3l$ rather than
$l$ because of the exponent $\lambda\,R_3(\tau/\sqrt{\lambda},\mathbf{w}/\sqrt{\lambda},\xi/\sqrt{\lambda})$).}

\begin{rem}
 \label{rem:parity 1}
We have in particular
$\mathcal{P}_{0,0}(x,x)=\pi^{-d}$, and in addition in view of (\ref{eqn:rescaled amplitude})
and the exponent $\lambda\,R_3$ one concludes that $\mathcal{P}_{j,l}$ has parity $(-1)^\ell$.
\end{rem}

The
remainder at step $(j_0,l_0)$ is bounded by 
$$
\lambda^{-j_0-(1+l_0)/2}\,\mathcal{R}_{j_0,l_0}\,(\xi,\mathbf{w},u,\eta)
\,e^{-a\,(\|\mathbf{w}\|^2+\|\eta\|^2+u^2)},
$$
where again $\mathcal{R}_{j_0,l_0}$ is a polynomial of degree $l_0$.

Given that $\|\xi\|=O\left(\lambda^{\delta}\right)$, the previous expression is bounded above by 
$$C_{j_0,l_0}\,\lambda^{-j_0-(1+l_0)/2+3l_0\delta}=C_{j_0,l_0}\,\lambda^{-j_0-1/2-3l_0\,(1/6-\delta)};$$
since on the other hand integration in the inner integral is over a domain of the form
$\big(1/(2D),2D\big)\times \left(-c\,\lambda^{\delta},c\,\lambda^{\delta}\right)$, the expansion may
be integrated term by term. Thus we get

\begin{eqnarray}
\label{eqn:integrale interno espanso}
I_\lambda(x,\eta,\mathbf{w},\xi)&\sim&\sum_{j,l\ge 0}\lambda^{d-j-l/2}\,I_\lambda(x,\eta,\mathbf{w},\xi)_{j,l},
\end{eqnarray}
where
\begin{eqnarray}
\label{eqn:jl inner integral}
 I_\lambda(x,\eta,\mathbf{w},\xi)_{j,l}=:\int_{1/(2D)}^{2D}\,\int_{-\infty}^{+\infty}\,e^{i\sqrt{\lambda}\,\Gamma_x(t,u)}
 \,e^{t\,\mathcal{E}_x}\cdot
t^{d-j}\,\mathcal{P}_{j,l}(\xi,\mathbf{w},u,\eta)
\,\mathrm{d}u\,\mathrm{dt}.
\end{eqnarray}

It is immediate from (\ref{eqn:fse Gamma e ampiezza A def}) that $\Gamma$ has a unique stationary point
$$
P_0=(t_0,u_0)=:\left(\frac{1}{\|\Phi(m_x)\|},0\right),
$$
where it vanishes; the Hessian matrix at the critical point is
$$
H_{P_0}(\Gamma)=\begin{bmatrix}
                 0&\|\Phi(m_x)\|\\
\|\Phi(m_x)\|&0
                \end{bmatrix},$$
with determinant and signature
$$\det \big(H_{P_0}(\Gamma)\big)=-\|\Phi(m_x)\|^2,\,\,\,\,\mathrm{sgn}\big(H_{P_0}(\Gamma)\big)=0.$$
Therefore, the Hessian operator is given by
\begin{equation}
 \label{eqn:Hessian operator}
L_\Gamma=:\frac{i}{\|\Phi(m_x)\|}\,\frac{\partial^2}{\partial t\partial u}.
\end{equation}

Furthermore, iterated integration by parts in $(t,u)$ shows that only a bounded neighborhood
of the critical point contributes non-negligibly to the asymptotics of $I(\eta,\mathbf{w},\xi)_{j,l}$.
More precisely, let $\beta\in \mathcal{C}^\infty_0\left(\mathbb{R}^2\right)$ be a bump function
identically equal to $1$ in a neighborhood of $(t_0,u_0)$. Then we can split (\ref{eqn:jl inner integral}) as
\begin{equation}
 \label{eqn:jl inner integral split}
 I(\eta,\mathbf{w},\xi)_{j,l}=I(\eta,\mathbf{w},\xi)_{j,l}'+I(\eta,\mathbf{w},\xi)_{j,l}'',
\end{equation}
where $I(\eta,\mathbf{w},\xi)_{j,l}',\,I(\eta,\mathbf{w},\xi)_{j,l}''$ are given by
(\ref{eqn:jl inner integral}), but with the integrand multiplied by $\beta(t,u)$ and
$1-\beta(t,u)$, respectively.
Integration by parts in $(t,u)$ in $I(\eta,\mathbf{w},\xi)_{j,l}''$ as in the standard
proof of the stationary phase Lemma
is legitimate, because the integrand is compactly supported away from the critical point
(and, at any rate, bounded by a decaying exponential in $u$); on the other hand at each iteration a factor
$\lambda^{-1/2}$ is introduced, and integration is over a domain of diameter $O\left(\lambda^\delta\right)$,
and we conclude that $I(\eta,\mathbf{w},\xi)_{j,l}''=O\left(\lambda^{-\infty}\right)$.

Applying the Stationary Phase Lemma to $I(\eta,\mathbf{w},\xi)_{j,l}'$, 
we obtain an asymptotic expansion 
in (\ref{eqn:integrale interno espanso}) of the form 
\begin{eqnarray}
 \label{eqn:asymptotic expansion inner integral}
\lefteqn{\lambda^{d-j-l/2}\,I(\eta,\mathbf{w},\xi)_{j,l}\sim \frac{2\pi}{\|\Phi(m_x)\|}\cdot
\lambda^{d-1/2-j-l/2}}\\
&&\nonumber\cdot 
\sum_{a\ge 0}\lambda^{-a/2}\,\frac{1}{a!}\,\left.L_\Gamma^a\left( t^{d-j}\,e^{t\,\mathcal{E}_x}
\cdot\mathcal{P}_{j,l}(\xi,\mathbf{w},u,\eta)\right)
\right|_{t=t_0,u=u_0}.
\end{eqnarray}

Given (\ref{eqn:fse Gamma e ampiezza A def}) and (\ref{eqn:Hessian operator}), we conclude that
\begin{equation}
\label{eqn:polynomial growth}
 L_\Gamma^a\left(e^{t\,\mathcal{E}_x}\right)=\mathcal{Q}_a(x,t,u;\xi,\mathbf{w},\eta)\,e^{t\,\mathcal{E}_x},
\end{equation}
where $\mathcal{Q}_a(x,t,u;\xi,\mathbf{w},\eta)$ is a polynomial in $(\xi,\mathbf{w},\eta)$, of degree
$\le 3a$.
It follows that
\begin{equation}
\label{eqn:polynomial growth 1}
 L_\Gamma^a\left(t^{d-j}\,e^{t\,\mathcal{E}_x}\cdot\mathcal{P}_{j,l}(\xi,\mathbf{w},u,\eta)\right)
 =\mathcal{R}_{j,l,a}(x,t;\xi,\mathbf{w},u,\eta)\,e^{t\,\mathcal{E}_x},
\end{equation}
where $\mathcal{R}_{j,l,a}$ is a polynomial in $(\xi,\mathbf{w},u,\eta)$, of degree
$\le 3(a+l)$.

\begin{rem}
 \label{rem:parity 2}
By (\ref{eqn:Hessian operator}), we have
$$
L_\Gamma^a=\left(\frac{i}{\|\Phi(m_x)\|}\right)^a\,\frac{\partial^{2a}}{\partial t^a\partial u^a}.
$$
Application of $\partial^a/\partial t^a$ in (\ref{eqn:polynomial growth 1}) doesn't change the parity of the argument
in $(\xi,\mathbf{w},u,\eta)$,
as $\mathcal{E}_x$ is homogeneous of degree $2$ (see (\ref{eqn:fse Gamma e ampiezza A def})). 
On the other hand, for the same reason 
$\partial^a/\partial u^a$ changes the parity by a factor $(-1)^a$. Since by Remark \ref{rem:parity 1}
$\mathcal{P}_{j,l}(\xi,\mathbf{w},u,\eta)$ has parity $(-1)^l$, we conclude that
$\mathcal{R}_{j,l,a}(x,t;\xi,\mathbf{w},u,\eta)$ has parity $(-1)^{l+a}$.
\end{rem}

Returning to (\ref{eqn:integrale interno}), we end up with an asymptotic expansion 
\begin{eqnarray}
 \label{eqn:asymptotic expansion}
I(\eta,\mathbf{w},\xi)&\sim&\frac{2\pi}{\|\Phi(m_x)\|}\cdot\left(\frac{\lambda}{\pi\,\|\Phi(m_x)\|}\right)^d\,\lambda^{-1/2}\,
\\
&&\cdot\exp\left(\frac{1}{\|\Phi(m_x)\|}\,\mathfrak{A}_x(\xi,\mathbf{w},\eta)\right)\,
\sum_{\ell\ge 0}\lambda^{-\ell/2}\,\mathcal{P}_\ell(x;\xi,\mathbf{w},\eta),\nonumber
\end{eqnarray}
where $\mathcal{P}_\ell$ is a polynomial of degree $\le 3\ell$ in $(\xi,\mathbf{w},\eta)$,
and parity $(-1)^\ell$; in particular $\mathcal{P}_0=\chi (0)$. Also,
\begin{eqnarray}
 \label{eqn:definition of exponent}
\mathfrak{A}_x(\xi,\mathbf{w},\eta)&=&\mathcal{E}_x\big(\xi,\eta,\mathbf{w}\big)\\
&=&\psi_2(A\,\mathbf{w},\mathbf{w})+2i\,g_m\Big(\xi_M(m_x),\eta_M(m_x)\Big)-\frac{1}{2}\,\big\|\eta_M(m_x)\big\|^2.
\nonumber\end{eqnarray}

Thus, 
$$
\Re\big(\mathfrak{A}_x(\xi,\mathbf{w},\eta)\big)\le -a\,\left(\|\mathbf{w}\|^2+\|\eta\|^2\right)
$$
for some $a>0$. On the other hand, since $\|\xi\|,\,\|\eta\|,\,\|\mathbf{w}\|=O\left(\lambda^{\delta}\right)$, we obtain
on the domain of integration
$$
\left|\lambda^{-\ell/2}\,\mathcal{P}_\ell (x;\xi,\mathbf{w},\eta)\right|\le C_\ell \,\lambda^{-\ell/2+3\delta\ell}
=C_\ell \,\lambda^{-3\ell(1/6-\delta)},
$$
and a similar bound for the remainder; since the domain of integration is again a ball centered at the 
origin of radius $O\left(\lambda^\delta\right)$, that the expansion can again be integrated term by term
in $\mathrm{d}\eta$.

We conclude that (\ref{eqn:fourier_transform_6}) may be rewritten as an asymptotic expansion
\begin{eqnarray}
 \label{eqn:espansione integrated}
\mathcal{S}_\chi(\lambda\,\beta,\mathbf{s}_0,y_\lambda,y_\lambda)&\sim&
\frac{2\pi}{\|\Phi(m_x)\|}\cdot\left(\frac{\lambda}{\pi\,\|\Phi(m_x)\|}\right)^d\,\lambda^{\frac{1-r}{2}}\,
e^{-i\,\lambda\,\langle\beta,\mathbf{s}_0\rangle}\nonumber\\
&&\cdot e^{\psi_2(A\mathbf{w},\mathbf{w})/\|\Phi(m_x)\|}\cdot \sum_{\ell\ge 0}\lambda^{-\ell/2}\,I_\ell (x;\mathbf{w},\xi),
\end{eqnarray}
where for $\ell=0,1,\ldots$ we have set
\begin{eqnarray}
 \label{eqn:r-imo integrale}
\lefteqn{I_\ell (x;\mathbf{w},\xi)}\\
&=:&\int_{\ker\Phi(m_x)}\,
\mathcal{P}_\ell (x;\xi,\mathbf{w},\eta)\,
e^{\frac{1}{\|\Phi(m)\|}\,\left[ 2i\,g_m\big(\xi_M(m_x),\eta_M(m_x)\big)-\frac{1}{2}\,\|\eta_M(m_x)\|^2\right]}\,\mathrm{d}\eta;
\nonumber
\end{eqnarray}
here $\mathrm{d}\eta$ is the Lebesgue measure on $\ker\Phi(m_x)\subseteq \mathfrak{t}$, when the latter subspace is
identified with $\mathbb{R}^{r-1}$ by means of an orthonormal basis. 

To compute the latter Gaussian integral, let
us choose orthonormal basis $\mathcal{K}$ for $\ker\Phi(m_x)$ and $\mathcal{D}$ for the subspace
$V(m_x)=:\mathrm{val}_{m_x}\big(\ker\Phi(m_x)\big)\subseteq T_{m_x}M$, and let $C$ be the $(r-1)\times (r-1)$ invertible
matrix representing the isomorphism $\ker\Phi(m_x)\rightarrow V(m_x)$ induced by $\mathrm{val}_{m_x}$
with respect to these basis.
If $\mathbf{u}_\xi,\,\mathbf{u}_\eta\in \mathbb{R}^{r-1}$ are the coordinate vectors of $\xi,\,\eta\in \ker\Phi(m_x)$ with respect
to $\mathcal{K}$, we have
$$
g_m\big(\xi_M(m_x),\eta_M(m_x)\big)=\mathbf{u}_\xi^t\,C^tC\,\mathbf{u}_\eta,
$$
so that the matrix $D$ and the function $\mathcal{D}$ 
in Definition \ref{defn:matrice prodotto scalare indotto}
are given by $D=C^tC$ and 
$\mathcal{D}(m_x)=|\det (C)|$, respectively.

On the other hand, the basis $\mathcal{K}$ provides a unitary isomorphism $\mathbb{R}^{r-1}\cong \ker\Phi(m_x)$, and
we can convert the integral in $\mathrm{d}\eta$ over $\ker\Phi(m_x)$ into an integral in $\mathrm{d}\mathbf{u}$
over $\mathbb{R}^{r-1}$:
$$
\int_{\ker\Phi(m_x)}\,\mathrm{d}\eta\,\,\rightarrow\,\,\int_{\mathbb{R}^{r-1}}\,\mathrm{d}\mathbf{u}.
$$
With the change of variables
$
\mathbf{a}=\mathbf{a}(\mathbf{u})
=:C\mathbf{u}/\sqrt{\|\Phi(m_x)\|},
$ 
we can rewrite (\ref{eqn:r-imo integrale}) as follows:
\begin{eqnarray}
 \label{eqn:r-imo integrale espanso}
\lefteqn{I_\ell(x,\mathbf{w},\xi)}\\
&=&\int_{\mathbb{R}^{r-1}}\,
\mathcal{P}_\ell(x;\xi,\mathbf{w},\mathbf{u})\,\exp\left(\frac{1}{\|\Phi(m_x)\|}\,\left[2i\,\langle C\,\mathbf{u}_\xi,C\,\mathbf{u}\rangle
-\frac{1}{2}\,\big\|C\,\mathbf{u}\big\|^2\right]\right)\,\mathrm{d}\mathbf{u}\nonumber\\
&=&\dfrac{\|\Phi(m_x)\|^{(r-1)/2}}{|\det(C)|}\,
\int_{\mathbb{R}^{r-1}}\,
\mathcal{Q}_\ell (x;\xi,\mathbf{w},\mathbf{a})\,\exp\left(\left[i\,\left\langle\frac{ 2\,C\mathbf{u}_\xi}{\sqrt{\|\Phi(m_x)\|}},\mathbf{a}\right\rangle
-\frac{1}{2}\,\big\|\mathbf{a}\big\|^2\right]\right)\,\mathrm{d}\mathbf{a};
\nonumber
\end{eqnarray}
here $\mathcal{Q}_\ell(x;\cdot,\cdot,\cdot)$ is obtained from $\mathcal{P}_\ell(x;\cdot,\cdot,\cdot)$ 
by the change of variable $\mathbf{u}=\mathbf{u}(\mathbf{a})$,
and is therefore a polynomial of degree $\le 3\ell$, and parity $(-1)^\ell$. 

Now the latter integral may be interpreted as the application
of a differential polynomial $\widetilde{\mathcal{Q}}_\ell(x;\xi,\mathbf{w},D_\xi)$ in $D_\xi=-i\partial_\xi$
of collective degree $\le 3\ell$ in $(\xi,\mathbf{w},D_\xi)$ to the exponential $\exp\left(-\|\mathbf{a}\|^2/2\right)$, evaluated at
$2\,C\mathbf{u}_\xi/\sqrt{\|\Phi(m_x)\|}$. More explicitly,

\begin{eqnarray} \label{eqn:r-imo integrale espanso 1}
\lefteqn{I_\ell(x,\mathbf{w},\xi)}\\
&=&\dfrac{1}{\mathcal{D}(m_x)}\,(2\,\pi\,\|\Phi(m_x)\|)^{(r-1)/2}\,\widetilde{\mathcal{Q}}_\ell (x;\xi,\mathbf{w},D_\xi)
\left(\exp\left(-\frac{2\,\|C\mathbf{u}_\xi\|^2}{\|\Phi(m)\|}\right)\right)\nonumber\\
&=&\dfrac{1}{\mathcal{D}(m_x)}\,(2\,\pi\,\|\Phi(m_x)\|)^{(r-1)/2}\,\mathcal{R}_\ell (x;\xi,\mathbf{w})
\cdot\exp\left(-\frac{2\,\|\xi_M(m_x)\|^2}{\|\Phi(m)\|}\right);\nonumber
\end{eqnarray}
here again $\mathcal{R}_\ell(x;\cdot,\cdot)$ is a polynomial of degree $\le 3\ell$ and degree $(-1)^\ell$,
and $\mathcal{R}_0=\chi(0)$. 
The norm of $\xi_M(m_x)$ in the latter line is of course computed in $T_{m_x}M$.

Inserting (\ref{eqn:r-imo integrale espanso 1}) in (\ref{eqn:espansione integrated}) we end up with the asymptotic expansion
\begin{eqnarray}
 \label{eqn:espansione integrated finale}
\lefteqn{\mathcal{S}_\chi(\lambda\,\beta,\mathbf{s}_0,y_\lambda,y_\lambda)}\\
&\sim&
\frac{2^{\frac{r+1}{2}}\,\pi}{\|\Phi(m_x)\|}\cdot\left(\frac{\lambda}{\pi\,\|\Phi(m_x)\|}\right)^{d+\frac{1-r}{2}}\,
\frac{e^{-i\,\lambda\,\langle\beta,\mathbf{s}_0\rangle}}{\mathcal{D}(m_x)}\,
e^{\left[\psi_2(A\mathbf{w},\mathbf{w})-2\,\|\xi_M(m_x)\|^2\right]/\|\Phi(m_x)\|}\nonumber\\
&&
\cdot \sum_{\ell\ge 0}\lambda^{-\ell/2}\,\mathcal{R}_\ell (x;\xi,\mathbf{w}).\nonumber
\end{eqnarray}

Since (\ref{eqn:espansione integrated finale}) coincides with (\ref{eqn:espansione integrated teorema}) with 
$\mathbf{n}=J_{m_x}\big(\xi_M(m_x)\big)$,  
this completes the proof of Theorem \ref{thm:main_directional_trace}.

\end{proof}

\begin{proof}[Proof of Corollary \ref{cor:fourier transform}]
To ease the exposition, let us pretend that $M_\beta(\mathbf{s}_0)$ is connected; otherwise
we merely need to repeat the argument over each connected component.

Let us write as above $y$ in the neighborhood of $X_\beta(\mathbf{s}_0)$ as 
$y=x+\mathbf{v}$, where $x\in X_\beta(\mathbf{s}_0)$ and $\mathbf{v}\in N_x\big(X_\beta(\mathbf{s}_0)\big)$
is in (\ref{eqn:vtauxi}). Thus we are assuming a moving system of HLC,
which is in general only possible locally along $X_\beta(\mathbf{s}_0)$. So to make the argument complete we should 
introduce an open cover of $X_\beta(\mathbf{s}_0)$ and a partition of unity subordinate to it,
but we shall leave this implicit to ease the exposition. By the given choice of HLC, we can unitarily identify 
$$
N_x\big(X_\beta(\mathbf{s}_0)\big)\cong N_{m_x}\big(M(\mathbf{s}_0)\big)\oplus N_{m_x}(M_\beta) \cong
\mathbb{C}^{c}\oplus \mathbb{R}^{r-1},
$$
where $c$ is the complex codimension of $M(\mathbf{s}_0)$ in $M$.

We have, by (\ref{eqn:integral_kernel_S}) and Theorem \ref{thm:main rapid decrease},
\begin{eqnarray}
 \label{eqn:integral_kernel_S 1}
\mathcal{F}\big(\chi_{\mathbf{s}_0}\cdot \mathrm{tr}(\mathfrak{U})\big)(\lambda\,\beta)&=&
\int_X\,\mathcal{S}_\chi(\lambda\,\beta,\mathbf{s}_0,y,y)\,\mathrm{dV}_X(y)\\
&\sim&\int_{X_\beta(\mathbf{s}_0)}F_{\mathbf{s}_0}\big(\lambda\,\beta,x)\,\mathrm{d}V_{X_\beta(\mathbf{s}_0)}(x),\nonumber
\end{eqnarray}
where
\begin{eqnarray}
\label{eqn:inner integral kernel}
 F_{\mathbf{s}_0}\big(\lambda\,\beta,x)&=:&\int_{\mathbb{R}^{r-1}}\int_{\mathbb{C}^c}
\mathcal{S}_\chi\big(\lambda\,\beta,\mathbf{s}_0,x+(\mathbf{w}+\mathbf{n}),x+(\mathbf{w}+\mathbf{n})\big)\\
&&\cdot
\varrho'\left(\lambda^{1/2-\delta}\,\mathbf{w}\right)\,\varrho''\left(\lambda^{1/2-\delta}\,\mathbf{n}\right)\,\mathrm{d}\mathbf{w}\,
\mathrm{d}\mathbf{n};\nonumber
\end{eqnarray}
here $\varrho'\in \mathcal{C}^\infty_0\left(\mathbb{C}^c\right)$ and $\varrho''\in \mathcal{C}^\infty_0\left(\mathbb{R}^{r-1}\right)$ 
are bump functions identically equal to $1$ on a neighborhood of the origin.
In turn, applying the rescaling $\mathbf{w}\mapsto \mathbf{w}/\sqrt{\lambda}$ and $\mathbf{n}\mapsto\mathbf{n}/\sqrt{\lambda}$, we can rewrite
(\ref{eqn:inner integral kernel}) as follows:
\begin{equation}
\label{eqn:rescaled inner integral}
F_{\mathbf{s}_0}\big(\lambda\,\beta,x)=\lambda^{-c+\frac{1-r}{2}}\,\mathcal{F}_{\mathbf{s}_0}\big(\lambda\,\beta,x),
  \end{equation}
where
\begin{eqnarray}
\label{eqn:inner integral kernel rescaled}
 \mathcal{F}_{\mathbf{s}_0}\big(\lambda\,\beta,x)
&=:&\int_{\mathbb{R}^{r-1}}\int_{\mathbb{C}^c}
\mathcal{S}_\chi\left(\lambda\,\beta,\mathbf{s}_0,x+\left(\frac{\mathbf{w}}{\sqrt{\lambda}}+\frac{\mathbf{n}}{\lambda}\right),
x+\left(\frac{\mathbf{w}}{\sqrt{\lambda}}+\frac{\mathbf{n}}{\lambda}\right)\right)\nonumber\\
&&\cdot
\varrho'\left(\lambda^{-\delta}\,\mathbf{w}\right)\,\varrho''\left(\lambda^{-\delta}\,\mathbf{n}\right)\,\mathrm{d}\mathbf{w}\,
\mathrm{d}\mathbf{n};\nonumber
\end{eqnarray}

Here integration is over a ball centered at the origin and radius $O\left(\lambda^\delta\right)$, and the integrand is given
by (\ref{eqn:espansione integrated finale}) with $\mathbf{n}$ in place of $J_m\big(\xi_M(m_x)\big)$.
It thus follows that $\mathcal{F}_{\mathbf{s}_0}\big(\lambda\,\beta,x)$ is given by an asymptotic expansion in descending powers
of $\lambda^{1/2}$. 
In addition, since $\mathcal{R}_\ell$ has parity $(-1)^\ell$, only even $\ell$'s give a non-vanishing contribution;
therefore, the resulting 
integrated asymptotic expansion is really in descending powers of $\lambda$.

More explicitly, we get
\begin{eqnarray}
 \label{eqn:mathcalF integrato}
\mathcal{F}_{\mathbf{s}_0}
(\lambda\,\beta,x)&\sim&
 \frac{2^{\frac{r+1}{2}}\,\pi}{\|\Phi(m_x)\|}\cdot\left(\frac{\lambda}{\pi\,\|\Phi(m_x)\|}\right)^{d+\frac{1-r}{2}}\,
\frac{e^{-i\,\lambda\,\langle\beta,\mathbf{s}_0\rangle}}{\mathcal{D}(m_x)}
\\
&&\cdot\sum_{k\ge 0}\lambda^{-k}\,\mathcal{L}_{\mathbf{s}_0,k}\big(\lambda\,\beta,x\big),\nonumber
\end{eqnarray}
where
$$
\mathcal{L}_{\mathbf{s}_0,k}\big(\lambda\,\beta,x)=:
\int_{\mathbb{R}^{r-1}}\int_{\mathbb{C}^c}\,\mathcal{S}_{k} (x;\mathbf{n},\mathbf{w})
e^{\left[\psi_2(A\mathbf{w},\mathbf{w})-2\,\|\mathbf{n}\|^2\right]/\|\Phi(m_x)\|}\,\mathrm{d}\mathbf{w}\,
\mathrm{d}\mathbf{n};
$$
here 
$\mathcal{S}_{k} (x;\mathbf{n},\mathbf{w})=:\mathcal{R}_{2k} (x;\xi,\mathbf{w})$ with $\mathbf{n}=J_{m_x}\big(\xi_M(m_x)\big)$,
so $\mathcal{S}_{0} (x;\mathbf{n},\mathbf{w})=\chi(\mathbf{0})$.

Let us compute the leading order term in (\ref{eqn:mathcalF integrato}). To this end, let $A'$ be the unitary 
$c\times c$ matrix representing the restriction of $\mathrm{d}_{m_x}\phi^M_{-\mathbf{s}_0}$ to the
normal bundle $N_{m_x}\big(M(\mathbf{s}_0)\big)\cong \mathbb{C}^c$ with respect to a \textit{complex} orthonormal basis. 
We have
\begin{eqnarray}
 \label{eqn:leading gaussian integral}
\mathcal{L}_{\mathbf{s}_0,0}&=&\chi (\mathbf{0})\cdot\int_{\mathbb{R}^{r-1}}\int_{\mathbb{C}^d}\,
e^{\left[\psi_2(A\mathbf{w},\mathbf{w})-2\,\|\xi_M(m_x)\|^2\right]/\|\Phi(m_x)\|}\,\mathrm{d}\mathbf{w}\,
\mathrm{d}\mathbf{n}\nonumber\\
&=&\chi (\mathbf{0})\cdot\left(\int_{\mathbb{C}^d}e^{\psi_2(A\mathbf{w},\mathbf{w})/\|\Phi(m_x)\|}\,\mathrm{d}\mathbf{w}\right)\cdot
\left(\int_{\mathbb{R}^{r-1}}e^{-2\,\|\mathbf{n}\|^2/\|\Phi(m_x)\|}\,\,
\mathrm{d}\mathbf{n}\right)\nonumber\\
&=&\chi (\mathbf{0})\cdot\|\Phi(m_x)\|^{\frac{r-1}{2}+c}\,\left(\int_{\mathbb{C}^c}e^{\psi_2(A'\mathbf{u},\mathbf{u})}\,\mathrm{d}\mathbf{u}\right)\cdot
\left(\int_{\mathbb{R}^{r-1}}e^{-2\,\|\mathbf{a}\|^2}\,\mathrm{d}\mathbf{a}\right)\nonumber\\
&=&\chi (\mathbf{0})\cdot\|\Phi(m_x)\|^{\frac{r-1}{2}+c}\,\frac{\pi^c}{\det(I_c-A')}\cdot \left(\frac{\pi}{2}\right)^{\frac{r-1}{2}}
\end{eqnarray}
(see (64) of \cite{pao_jam}).
Inserting (\ref{eqn:leading gaussian integral}) in (\ref{eqn:mathcalF integrato}), we obtain
\begin{eqnarray}
 \label{eqn:mathcalF interato principale}
\lefteqn{F_{\mathbf{s}_0}(\lambda\,\beta,x)}\nonumber\\
&&\sim\lambda^{-c+\frac{1-r}{2}}\,\frac{2^{\frac{r+1}{2}}\,\pi}{\|\Phi(m_x)\|}\cdot\left(\frac{\lambda}{\pi\,\|\Phi(m_x)\|}\right)^{d+\frac{1-r}{2}}\,
\frac{e^{-i\,\lambda\,\langle\beta,\mathbf{s}_0\rangle}}{\mathcal{D}(m_x)}\nonumber\\
&&\cdot\chi (\mathbf{0})\cdot\|\Phi(m_x)\|^{\frac{r-1}{2}+c}\,\frac{\pi^c}{\det(I_c-A')}\cdot \left(\frac{2}{\pi}\right)^{\frac{1-r}{2}}\nonumber\\
&&\cdot \sum_{k\ge 0}\lambda^{-k}\,\mathcal{U}_k(\beta,x)\nonumber\\
&=&\frac{2\pi}{\|\Phi(m_x)\|}\,\,
\frac{e^{-i\,\lambda\,\langle\beta,\mathbf{s}_0\rangle}}{\mathcal{D}(m_x)}\,\left(\frac{\lambda}{\|\Phi(m_x)\|\,\pi}\right)^{d+1-r-c}\cdot
\frac{1}{\det(I_c-A')}\nonumber\\
&&\cdot \sum_{k\ge 0}\lambda^{-k}\,\mathcal{U}_k(\mathbf{s}_0,\beta,x)
\end{eqnarray}
where $\,\mathcal{U}_0(\beta,x)=\chi(\mathbf{0})$. 
Clearly $\det(I_c-A')=\mathfrak{c}(\mathbf{s}_0)$. Therefore, using (\ref{eqn:mathcalF interato principale}) in
(\ref{eqn:integral_kernel_S 1}) we obtain
\begin{eqnarray*}
 \mathcal{F}\big(\chi_{\mathbf{s}_0}\cdot \mathrm{tr}(\mathfrak{U})\big)(\lambda\,\beta)\sim
\frac{2\pi}{\mathfrak{c}(\mathbf{s}_0)}\,e^{-i\,\lambda\,\langle\beta,\mathbf{s}_0\rangle}
\,\left(\frac{\lambda}{\pi}\right)^{d+1-r-c}\cdot\sum_{k\ge 0}\lambda^{-k}\,\mathcal{U}_k(\mathbf{s}_0,\beta),
\end{eqnarray*}
withe the leading order coefficient being given by
\begin{equation*}
 \mathcal{U}_k(\mathbf{s}_0,\beta)=:\chi(\mathbf{0})\cdot\int_{X_\beta(\mathbf{s}_0)}\dfrac{1}{\|\Phi(m_x)\|^{d+2-r-c}}\,\frac{1}{\mathcal{D}(m_x)}\,
\mathrm{d}V_{X_\beta(\mathbf{s}_0)}(x).
\end{equation*}

\end{proof}

\section{Notational Appendix}

For the reader's convenience, we collect here some of the notation going into the arguments and asymptotic
expansions.

\begin{enumerate}
 \item $(M,J,2\omega)$: the Hodge manifold playing the role of \lq phase space\rq.
 \item $\upsilon_f$: the Hamiltonian vector field associated to $f\in \mathcal{C}^\infty(M)$.
 \item $\phi^M_S:M\rightarrow M$ ($S\in \mathbb{R}$): the Hamiltonian flow of $\upsilon_f$ (dependence on $f$
 is understood).
 \item $(\mathcal{A},h)$: the positive Hermitian homolomorphic
line bundle quantizing $(M,2\omega)$, with dual $\mathcal{A}^\vee$.
\item $X\subseteq \mathcal{A}^\vee$: the unit circle bundle.
\item $\alpha$: the contact form on $X$.
\item $\mathrm{d}V_M$ and $\mathrm{d}V_X$: the naturally induced volume forms on $M$ and $X$, respectively.
\item $\mathbf{v}^\sharp$: the horizontal lift (for $\alpha$) of a tangent vector to $M$; $\partial_\theta$: the generator
of the circle action on $X$ (see (\ref{eqn:contact lift vector}) and (\ref{eqn:normal bundles MXbeta})).
\item $\widetilde{\upsilon}_f=:\upsilon_f^\sharp-f\,\partial_\theta$: the contact vector field on $X$ associated to
$f\in \mathcal{C}^\infty(M)$ (see (\ref{eqn:contact lift vector})).
\item $\phi^X_s:X\rightarrow X$ ($s\in \mathbb{R}$): the contact flow generated by $\widetilde{\upsilon}_f$.
\item $H(X)\subseteq L^2(X)$: the Hardy space of $X$ (Definition \ref{defn:spazio di Hardy}). 
\item $\Pi:L^2(X)\rightarrow L^2(X)$:
the Szeg\"{o} kernel of $X$ (Definition \ref{defn:spazio di Hardy}).
\item $t\psi(x,y)$ and $s(t,x,y)$: the phase and amplitude in the description of $\Pi$ as an FIO after \cite{bdm_sj}
(see (\ref{eqn:szego_microlocal})).
\item $\mathfrak{U}(s)=\mathfrak{U}_f(s):H(X)\rightarrow H(X)$ ($s\in \mathbb{R}$): the 1-parameter family of unitary automorphisms 
induced by a compatible $f\in \mathcal{C}^\infty(M)$ (see (\ref{eqn:1parameter unitary})).
\item $\mathfrak{T}_f=:\left. i\,\widetilde{\upsilon}_f\right|_{H(X)}:H(X)\rightarrow H(X)$:
the self-adjoint Toeplitz
operator associated to the contact vector field of a compatible Hamiltonian $f$, with principal 
symbol $\mathfrak{s}_{\mathfrak{T}_f}(x,r\,\alpha_x)=r\,f\big(\pi(x)\big)$ (see (\ref{eqn:restriction of vf})).
\item $\mathrm{tr}(\mathfrak{U})$: the distributional trace of $\mathfrak{U}$ (\S \ref{sct:distributional trace}). 
\item $\upsilon_k$ and $\widetilde{\upsilon}_k$: the commuting Hamiltonian and contact vector fields associated to
Poisson commuting compatible Hamiltonians $f_k$, $k=1,\ldots,r$ (\S \ref{sctn:commuting_contact}).
\item $\phi^M_\mathbf{s}:M\rightarrow M$ and $\phi^X_\mathbf{s}:X\rightarrow X$ ($\mathbf{s}\in \mathbb{R}^r$):
the Hamiltonian and contact actions of $\mathbb{R}^r$ on $M$ and $X$, respectively, generated by the $f_k$'s.
\item $\mathfrak{T}_k$: the Toeplitz operator induced by restriction of $i\,\widetilde{\upsilon}_k$;
$\mathfrak{T}=:(\mathfrak{T}_k)$, the corresponding commuting system of Toeplitz operators.
\item $\Lambda_j=(\lambda_{kj})$:
the $j$-th joint eigenvalue of $\mathfrak{T}=(\mathfrak{T}_k)$, 
with joint eigenfunction $e_j$ (see \S \ref{sctn:joint spectrum and trace}).
\item $\mathfrak{t}=:T_\mathbf{0}\mathbb{R}^r$, $\mathfrak{t}^\vee$ its dual (see Notation \ref{notn:R_n_t}).
\item $\xi_M$ and $\xi_X$: the vector fields on $M$ and $X$, respectively, induced by $\xi\in \mathfrak{t}$
(Definition \ref{defn:induced_vector_fields}).
\item $\mathrm{val}_m:\mathfrak{t}\rightarrow T_mM$ and $\mathrm{val}_x:\mathfrak{t}\rightarrow T_xX$:
the evaluation maps $\xi\mapsto \xi_M(m)$ and $\xi\mapsto \xi_X(x)$, respectively (Definition \ref{defn:induced_vector_fields}).
\item $\Phi:M\rightarrow \mathfrak{t}^\vee$: the moment map associated to the Hamiltonian action of $\mathbb{R}^r$ generated
by the $f_j$'s (see (\ref{eqn:moment_map}) and Notation \ref{notn:R_n_t} in \S \ref{sctn:moment map tansversality}).
\item $\Xi:M\rightarrow \mathfrak{t}$: the normalized \lq dual map\rq\, to $\Phi$ (\S \ref{sctn:intrinsic vector field}).
\item $\mathfrak{U}(\mathbf{s}):H(X)\rightarrow H(X)$ ($\mathbf{s}\in \mathbb{R}^r$): 
the unitary representation of $\mathbb{R}^r$ associated to the compatible and commuting $f_j$'s 
(see (\ref{eqn:unitary_operator_r1})).
 \item $\mathrm{tr}(\mathfrak{U})$: the distributional trace of $\mathfrak{U}$ (see (\ref{eqn:distributional trace Abelian})).
 \item $\mathrm{Per}(\phi^M)$ and $\mathrm{Per}(\phi^X)$: the set of periods of $\phi^M$ and 
 $\phi^X$, respectively (see (\ref{eqn:periods on X}) and Definition \ref{defn:periods}).
 \item $M(\mathbf{s})$ and $X(\mathbf{s})$: the fixed loci of $\phi^M_\mathbf{s}$ and $\phi^X_\mathbf{s}$, respectively
(Definition \ref{defn:notation_fixed_periods}). 
\item $\beta\in \left(\mathbb{R}^r\right)^\vee$: a general covector of unit norm at the origin of $\mathbb{R}^r$
(see (\ref{eqn:fourier_transform_0}) and Definition \ref{defn:matrice prodotto scalare indotto}).
\item $M_\beta=:\pi^{-1}(\mathbb{R}_+\cdot \beta)$, $X_\beta=:(\Phi\circ \pi)^{-1}(\mathbb{R}_+\cdot \beta)$ 
(see Definition \ref{defn:inverse_image_ray}); $N(M_\beta)$ and $N(X_\beta)$: their normal bundles
(see (\ref{eqn:normal bundle Xbeta}), (\ref{eqn:normal bundles MXbeta}), and
\S \ref{sctn:transversality_fixed_loci}).
\item $M_\beta(\mathbf{s})=:M_\beta\cap M(\mathbf{s})$, $X_\beta(\mathbf{s})=:X_\beta\cap M(\mathbf{s})$
(Definition \ref{defn:periods}).
\item $N\big(X_\beta(\mathbf{s}_0)\big)$: the normal bundle of $X_\beta(\mathbf{s}_0)$ (see (\ref{eqn:normal bundle Xbeta})).
\item $\mathrm{d}V_{M_\beta(\mathbf{s}_0)_j}$: the Riemannian volume density on
the $j$-th connected component $M_\beta(\mathbf{s}_0)_j$ of $M_\beta(\mathbf{s}_0)$
(Corollary \ref{cor:fourier transform}).
\item $f_j$: the complex dimension of
of $M(\mathbf{s}_0)$ along $M_\beta(\mathbf{s}_0)_j$ (Definition \ref{defn:determinant fixed locus component}).
\item $\mathfrak{c}_j(\mathbf{s}_0)$: the Poincar\'{e} type invariant along $M_\beta(\mathbf{s}_0)_j$
(Definition \ref{defn:determinant fixed locus component}).
\item $\mathcal{F}$: the Fourier transform on $\mathbb{R}^r$ (see (\ref{eqn:fourier_transform_0})).
\item $\chi$: a bump function on $\mathbb{R}^r$ supported near the origin; $\chi_{\mathbf{s}_0}(\cdot)=:\chi (\cdot -\mathbf{s}_0)$
its translate (see (\ref{eqn:fourier_transform_0})); $\widehat{\chi}$ its Fourier transform.
\item $\mathcal{S}_\chi(\lambda\,\beta,\mathbf{s}_0)$: the smoothing operator obtained by averaging $\mathfrak{U}(\mathbf{s})$
with weight $\chi_{\mathbf{s}_0}(\mathbf{s})\,e^{-i\lambda\cdot\langle \beta,\mathbf{s}\rangle}$
(see (\ref{eqn:smoothing_operator})).
\item $\mathcal{D}(m)$: the invariant relating the two naturally induced Euclidean structures on $\ker \Phi(m)$
when $m\in M_\beta$ (Definition \ref{defn:matrice prodotto scalare indotto}).
\item $x+(\theta,\mathbf{v})$: the additive notation for Heisenberg local coordinates (\S \ref{sctn:heisenberg lc}).
\item $\psi_2(\mathbf{v},\mathbf{w})$: the universal exponent from \cite{sz} governing Szeg\"{o} kernel scaling asymptotics
(Definition \ref{defn:psi2}).
\item $A=A_{m_x}$: the unitary matrix representing $d_x\phi^X_{-\mathbf{s}_0}:T_{m_x}M\rightarrow T_{m_x}M$, 
given a choice of a HLC system centered at $x\in X_\beta(\mathbf{s}_0)$ (Notation \ref{notn:representing matrix}
in \S \ref{sctn:heisenberg lc}).

\end{enumerate}

\end{document}